\newtheorem{theorem}{Theorem}[section]
\newtheorem{lemma}[theorem]{Lemma}
\newtheorem{prop}[theorem] {Proposition}
\newtheorem{cor}[theorem]  {Corollary}
\newtheorem{definition}[theorem] {Definition}
\theoremstyle{definition}
\theoremstyle{remark}
\newtheorem*{remark}{Remark}
\newtheorem*{example}{Example}
\numberwithin{equation}{section}
\newcommand{\e}{\mathrm{e}} % exponential e
\newcommand{\N}{\mathbb{N}}
\newcommand{\R}{\mathbb{R}}
\newcommand{\C}{\mathbb{C}}
\renewcommand{\P}{\mathbb{P}}
\newcommand{\E}{\mathbb{E}}
\newcommand{\dd}{\mathrm{d}} %integration d
\newcommand{\eps}{\varepsilon}
\newcommand{\vect}[1]{\boldsymbol{#1}}
\renewcommand{\Re}{\mathrm{Re}\,} %real part
\newcommand{\be}{\begin{equation}}
\newcommand{\ee}{\end{equation}}
\newcommand{\ba}{\begin{equation} \begin{aligned}}
\newcommand{\ea}{\end{aligned}\end{equation}}
\newcommand{\bes}{\begin{equation*}}
\newcommand{\ees}{\end{equation*}}
\def\1{{\mathchoice {1\mskip-4mu\mathrm l}      % Blackboard bold 1
{1\mskip-4mu\mathrm l}
{1\mskip-4.5mu\mathrm l} {1\mskip-5mu\mathrm l}}}
\begin{document}

%%%%%%%%%%%%%%
% Front matter %% 
\title{Cluster expansions for Gibbs point processes}
\author{Sabine Jansen}
\address{Mathematisches Institut, Ludwig-Maximilians Universit{\"a}t, Theresienstr. 39, 80333 
 M{\"u}nchen, Germany}
 \email{jansen@math.lmu.de}

\date{17 June 2019}
\maketitle
%%%%%%%%
\begin{abstract} 
	We provide a sufficient condition for the uniqueness in distribution of Gibbs point processes with non-negative pairwise interaction, together with convergent expansions of the log-Laplace functional, factorial moment densities and factorial cumulant densities (correlation functions and truncated correlation functions). The criterion is a continuum version of a convergence condition by Fern{\'a}ndez and Procacci (2007), the proof is based on the Kirkwood-Salsburg integral equations and is close in spirit to the approach by Bissacot, Fern{\'a}ndez and Procacci (2010). In addition, we provide formulas for cumulants of double stochastic integrals with respect to Poisson random measures (not compensated) in terms of multigraphs and pairs of partitions, explaining how   to go from cluster expansions to some diagrammatic expansions (Peccati and Taqqu, 2011). We also discuss relations with generating functions for trees, branching processes, Boolean percolation and the random connection model. The presentation is self-contained and requires no preliminary knowledge of cluster expansions. \\

	\noindent \emph{Keywords}: Gibbs point processes, cluster expansions, cumulants, branching processes, Boolean percolation.\\
	
	\noindent 
	\emph{MSC classification}: 
	%60D05; % Geometric probability and stochastic geometry
	60K35; %Interacting random processes; statistical mechanics type models; percolation theory
	82B05;  % Classical equilibrium statistical mechanics
	60G55. % point processe
\end{abstract}

 %%%%%%%%%%%%% 

\tableofcontents

\section{Introduction}

Gibbs point processes form an important class of models in statistical mechanics, stochastic geometry and spatial statistics~\cite{chiu-kendall-stoyan-mecke, moller-waagepetersen,  dereudre2017gibbsintro}.  In finite volume, they are defined, roughly, as modifications of Poisson point processes. The modification involves a factor $\exp( - H(\eta))$ where $H(\eta)$ incorporates interactions between points and the magnitude of $H$ captures how far the Gibbs point process might be from the a priori Poisson point process. 
In infinite volume, Gibbs measures are defined instead by structural properties such as the GNZ equation and the DLR conditions, named after Georgii, Nguyen  and Zessin, and Dobrushin, Lanford, and Ruelle, respectively, and proving the mere existence of such measures requires some work. 

A notorious difficulty when dealing with Gibbs measures is that many quantities cannot be computed explicitly. For example, the intensity measure $\rho$ of a Gibbs point process is a highly non-trivial function of the intensity measure $\lambda_z$ of the underlying Poisson point process, which leads to challenges when estimating, say, intensity parameters of the Poisson point process based on observations of the density of the Gibbs point process~\cite{baddeley-nair2012}. As a way out, physicists and mathematical physicists have long worked with power series expansions~\cite[Chapter 4.3]{ruelle1969book}: If interactions are sufficiently weak, then the Gibbs point process should be close to the non-interacting Poisson point process, and correction terms may be captured by convergent power series in the Poisson intensity parameter, called \emph{activity} or \emph{fugacity} in statistical mechanics. The expansions obtained in this way are called \emph{cluster expansions}; the name stems from combinatorial expressions for the expansion coefficients in terms of connected graphs. 

The mathematical theory of cluster expansions is rich and well-developed~\cite{brydges1986clustercourse}. Cluster expansions feature prominently in mathematical physics, and have found applications in combinatorics~\cite{scott-sokal2005,faris2010combinatorics} and random graphs~\cite{yin2012clustergraphs}. Their popularity in the community of point processes, however, stays somewhat limited (see,  nevertheless,~\cite{nehring-poghosyan-zessin2013} and the references therein), in stark contrast with  expansion techniques from the realm of Poisson-Malliavin calculus~\cite{peccati-taqqu2011book,last-penrose2017book}. Interestingly, in their seminal article on  combinatorics and stochastic integration~\cite{rota-wallstrom1997}, Rota and Wallstrom explicitly mention Feynman diagrams,  ``physicists aiming at the development of nonlinear quantum field theories,'' and ``probabilists in search of new point processes that would not turn out to be Poisson distributions in disguise,'' but do not mention statistical physics or cluster expansions at all. It is our hope that the present article helps make the theory of cluster expansions accessible to a broader community of probabilists working with point processes.

Our first main result is a sufficient criterion for the uniqueness of Gibbs point processes with non-negative pair interactions (Theorem~\ref{thm:uniqueness}), accompanied by convergent expansions for the log-Laplace functional of the Gibbs point process as well as its factorial moment and factorial cumulant densities (also known as correlation functions and truncated correlation functions), see Theorems~\ref{thm:log-laplace}, \ref{thm:truncated}, and \ref{thm:correlations}.  The  existence of such expansions, and their convergence in some non-empty domain, is well-known~\cite{ruelle1969book, poghosyan-ueltschi2009, nehring-poghosyan-zessin2013}, however our generalized convergence criterion was previously proven only for discrete polymer systems~\cite{fernandez-procacci2007} and hard spheres~\cite{fernandez-procacci-scoppola2007}. 

The results in the present article cover non-negative pairwise interactions only. For pairwise interactions with possibly negative values, many convergence conditions are available in the literature, see e.g.~\cite{poghosyan-ueltschi2009, procacci-yuhjtman2017} and Section~\ref{sec:attractive}. Cluster expansions for multi-body interactions are much less developed, some results can be found in~\cite{moraal1976multibody,procacci-scoppola2000,rebenko2005multibody}. 

Convergence conditions in the theory of cluster expansions often mirror fixed point equations for generating functions of trees~\cite{faris2010combinatorics,fernandez-procacci2007,jansen2015tonks}. The equations  are reminiscent of an equation satisfied by extinction probabilities in branching processes. In Section~\ref{sec:branching} we discuss these relations in more depth and show that if a convergence condition due to Ueltschi~\cite{ueltschi2004cluster} holds true, then there is extinction in an associated multi-type branching process (Proposition~\ref{prop:extinction}). 

This observation is interesting because in turn, extinction of the branching process implies absence of percolation in a related random connection model~\cite{meester-roy1996book}. In Section~\ref{sec:boolean} we discuss these relations in the context of disagreement percolation, an expansion-free method for proving uniqueness of Gibbs measures and exponential mixing~\cite{georgii-haggstrom-maes2001}. In the future these consideration may help systematize relations between Gibbs point processes and the random connection model, perhaps starting from the duality between hard spheres and Boolean percolation model discussed by  Torquato~\cite{torquato2012}, or expansion methods and integral equations for connectedness functions~\cite{coniglio-deangelis-forlani1977, given-stell1990, last-ziesche2017}.

Our second main result consists in two propositions on moments and cumulants of double stochastic integrals with respect to a random Poisson measure (not compensated). Proposition~\ref{prop:cumulants-diagrams} involves pairs of partitions, it is the analogue of similar formulas for multiple stochastic integrals with respect to compensated Poisson measures~\cite{peccati-taqqu2011book}. Proposition~\ref{prop:stochmoment} 
replaces pairs of partitions with multigraphs, i.e., graphs with multiple edges. The propositions elucidate the relation between cluster expansions and diagrammatic expansions of cumulants used in stochastic geometry~\cite{peccati-taqqu2011book, last-penrose-schulte-thale2014moments}. 
As this relation provides a modern point of contact between point processes and cluster expansions, let us try to convey the main idea; details are given in Sections~\ref{sec:cumulants} and~\ref{sec:stochmoment}. Let $\eta_z$ be a homogeneous Poisson point process in a bounded region $\Lambda\subset \R^d$ with intensity parameter $z$, and $v: \mathbb R^d\times\mathbb R^d\to \R$ a symmetric function ($v(x,y) = v(y,x)$). Consider the random variable $X_z := \int_{\Lambda^2} v\dd \eta_z^{(2)}$, with $\eta_z^{(2)}$ the second factorial measure of $\eta_z$. 
The cumulants of $X_z$, if they exist, are the coefficients $\kappa_m(X_z)$ in the Taylor expansion 
$$
	\log \mathbb E\Bigl[ \e^{\beta X_z}\Bigr] = \sum_{m\geq 1} \frac{\beta^m}{m!}\, \kappa_m (X_z) \qquad (\beta \to 0).
$$
The left-hand side is closely related to the so-called \emph{grand-canonical partition function} from statistical mechanics, given by 
$$
	\Xi_\Lambda(\beta,z):= 1+ \sum_{n=1}^\infty \frac{z^n}{n!}\int_{\Lambda^n}
		\exp\Biggl( - \beta \sum_{1\leq i <j\leq n} v(x_i,x_j)\Biggr) \dd \vect x
$$
provided the integrals and the sum converge. A close look reveals that 
$$
	 \Xi_\Lambda(\beta,z) = \e^{z|\Lambda|}\, \E\Bigl[ \e^{- \beta X_z / 2} \Bigr].
$$
The theory of cluster expansions yields an expansion of the logarithm of the partition function \emph{in powers of $z$}, of the form
$$
	\log \Xi_\Lambda(\beta,z) = z|\Lambda|+ \sum_{n=2}^\infty \frac{z^n}{n!} \int_{\Lambda^n} \sum_{G\in \mathcal C_n} \prod_{\{i,j\}\in E(G)} \bigl( \e^{-\beta v(x_i,x_j) }- 1\bigr) \dd \vect x
$$
with $\mathcal C_n$ the connected graphs with vertices $1,\ldots,n$, and $E(G)$ the set of edges of $G$. In order to arrive at an expansion in powers of $\beta$ rather than $z$, we need to expand $\exp( - \beta v(x_i,x_j))$. This results in a power series in two variables, $\beta$ and $z$. The coefficient of $z^n\beta^m$ is associated with a multigraph with $m$ edges on $n$ vertices, and the relation between cluster expansions and diagrammatic expansions of cumulants is this: \emph{Cluster expansions group multigraphs according to their number of vertices, cumulants group multigraphs according to their number of edges}. 

The article is organized as follows: Section~\ref{sec:main} provides the setting and formulates the main results. Section~\ref{sec:ks} is devoted to the proof of the uniqueness theorem, based  on the Kirkwood-Salsburg equations. Section~\ref{sec:correlations} addresses the expansions of correlation functions (factorial moment densities), Section~\ref{sec:truncated} deduces expansions for the  truncated correlation functions (factorial cumulant densities) and for the log-Laplace functional. The special case of bounded volumes $\Lambda$ is dealt with in Section~\ref{sec:finite-volume}. Section~\ref{sec:trees} explains the relation between Kirkwood-Salsburg relations on the one hand and trees and forests on the other hand. In Section~\ref{sec:stochmoment} we prove the representations for stochastic integrals that we sketched above. Appendix~\ref{app:knownstuff} shows how to go from the GNZ equation to the Kirkwood-Salsburg equation, Appendix~\ref{app:existence} gives a self-contained proof of existence of Gibbs measures adapted to our setup. Appendix~\ref{app:RCM} provides some mathematical details for the random connection model discussed informally in Section~\ref{sec:boolean}.

\section{Main results} \label{sec:main}

\subsection{Setting} \label{sec:setting} 
Let $(\mathbb X,\mathrm{dist})$ be a complete separable metric space, $\mathcal X$ the Borel $\sigma$-algebra, and $\mathcal X_\mathsf b$ the collection of bounded Borel sets. Let $\lambda$ be a  reference measure on $\mathbb X$ that is locally finite, i.e., $\lambda(B)< \infty$ for all $B\in \mathcal X_\mathsf b$. A \emph{locally finite counting measure} is a measure $\eta$ on $(\mathbb X,\mathcal X)$ with $\eta(B) \in \N_0$ for all $B\in \mathcal X_\mathsf b$. Let $\mathcal N$ and $\mathcal N_\mathsf f$ be the spaces of locally finite and finite counting measures, respectively. Each non-zero $\eta\in \mathcal N$ can be written as 
$	\eta =\sum_{j=1}^\kappa \delta_{x_j}$
with $\kappa \in \N\cup \{\infty\}$ and $x_1,x_2,\ldots \in \mathbb X$. 
We define  equip $\mathcal N$ with the $\sigma$-algebra $\mathfrak N$ generated by the integer-valued random variables $\eta \mapsto \eta(B)$,  $B\in \mathcal X_\mathsf b$. 
The $n$-th factorial measure of $\eta = \sum_{j=1}^ \kappa \delta_{x_j} \in \mathcal N$ is denoted $\eta^{(n)}$. It is the unique measure on $\mathbb X^n$ such that for all measurable non-negative $g:\mathbb X^n\to \R_+$, we have 
\bes
		\int_{\mathbb X^n} g\, \dd \eta^{(n)} = \sideset{}{^{\neq}} \sum_{i_1,\ldots,i_n\leq \kappa} g(x_{i_1},\ldots,x_{i_n})
\ees
with summation over pairwise distinct indices $i_1,\ldots,i_n$.

Fix a non-negative pair potential $v$, i.e., a measurable function  $v:\mathbb X\times \mathbb X\to \R_+\cup \{\infty\}$ that is symmetric, $v(x,y) = v(y,x)$. Set $H_0=0$ and 
\be
	H_n(x_1,\ldots,x_n) = \sum_{1\leq i < j \leq n} v(x_i,x_j)\quad (x_1,\ldots,x_n\in \mathbb X)
\ee
so that $H_1(x_1) = 0$. Further define
\bes
	W(x;\eta) =\int_\mathbb X v(x,y)\dd \eta(y)
	\quad (x\in \mathbb X,\ \eta\in \mathcal N).
\ees
Cartesian products such as $\mathbb X^n$ or $\mathbb X\times \mathfrak N$ are  equipped with product $\sigma$-algebras, e.g., $\mathcal X^{\otimes n}$, $\mathcal X\otimes \mathcal N$. 
We adopt the conventions $\log 0 = -\infty$, $0\log 0 =0$, $0^0 =1$, $\exp(-\infty) =0$, and  $\exp(\infty) = \infty$. 

\begin{definition}
	Let  $z:\mathbb X\to \R_+$ be  measurable map with $\int_B z\dd \lambda < \infty$ for all $B\in \mathcal X_\mathsf b$. 	A probability measure $\mathsf P$ on $(\mathcal N,\mathfrak N)$ is a Gibbs measure with pair interaction $v(x,y)$ and activity $z$  if 
	\be \label{eq:gnz} \tag{GNZ}
		\mathsf E\Bigl[ \int_{\mathbb X} F(x,\eta) \dd \eta(x)\Bigr] = \int_\mathbb X \mathsf E\Bigl[ F(x,\eta+ \delta_x)\e^{-W(x;\eta)}\Bigr] z(x) \dd \lambda(x)
	\ee
	for all measurable $F:\mathbb X\times \mathcal N \to[0,\infty)$. The set of Gibbs measures is denoted  $\mathscr G(z)$.
\end{definition} 

\noindent The GNZ equation named after Georgii, Nguyen and Zessin is equivalent to the DLR  conditions (named after Dobrushin, Lanford, Ruelle) more familiar to mathematical physicists, see \cite{georgii76,nguyen-zessin79}. It is a probabilistic cousin of the Kirkwood-Salsburg equations~\cite[Chapter 4.2]{ruelle1969book} for correlation functions recalled in Lemma~\ref{lem:ks} below. 
The GNZ equation is also intimately related to the detailed balance equation for the Markovian birth and death process with formal generator 
\be
	(Lg)(\eta) = \int_{\mathbb X} \bigl[ g(\eta - \delta_x) - g(\eta)\bigr] \dd \eta(x) + \int_\mathbb X z(x) \e^{- W(x;\eta)} \bigl[ g(\eta+ \delta_x) - g(\eta)\bigr] \dd \lambda(x),
\ee
sometimes called \emph{continuum Glauber dynamics}, see~\cite{preston1975birthanddeath,kondratiev-lytvynov2005} for the construction of such processes and~\cite{fernandez-ferrari-garcia2001lossnetwork,bertini-cancrini-cesi2003, kondratiev-kuna-ohlerich2013, schuhmacher-stucki2014, fernandez-groisman-saglietti2016} for some applications. When the interaction vanishes ($v\equiv 0$), the GNZ equation reduces to the Mecke formula for the Poisson point process with intensity measure $\lambda_z$ and, in the context of Poisson-Malliavin calculus the operator $L$ and the generated semi-group are called \emph{Ornstein-Uhlenbeck operator} and \emph{Mehler semi-group} ~\cite{last2016stochana}. 

The $n$-th \emph{factorial moment density} $\rho_n(x_1,\ldots,x_n)$, called \emph{$n$-point correlation function} in statistical mechanics, is the Radon-Nikod{\'y}m derivative $\rho_n = \frac{\dd \alpha_n}{\dd \lambda^n}$ of the $n$-th factorial moment measure $\alpha_n$ of $\mathsf P$~\cite{daley-verejones2008vol2, last-penrose2017book}. Thus 
\bes
	\mathsf{E} \Bigl[ \int_{\mathbb X^n} g\, \dd \eta^{(n)}\Bigr] = \int_{\mathbb X^n} g \rho_n \dd \lambda^n
\ees
for all non-negative measurable $g$. The correlation functions are symmetric, i.e.,  invariant with respect to permutation of the arguments.  For Gibbs measures, they admit the following concrete representation. 

\begin{lemma} \label{lem:correp}
	Let $\mathsf P\in \mathscr G(z)$. Then the $n$-point functions $\rho_n$ exist and satisfy 
	\be\label{eq:correp}
		\rho_n(x_1,\ldots,x_n) =z(x_1)\cdots z(x_n)\, \e^{-H_n(x_1,\ldots,x_n)}  \int_\mathcal N \e^{- \sum_{i=1}^n W(x_i;\eta)}  \dd \mathsf P(\eta)
	\ee
	for all $n\in \N$ and $\lambda^n$-almost all $(x_1,\ldots,x_n)\in \mathbb X^n$.
\end{lemma}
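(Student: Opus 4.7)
The plan is to iterate the GNZ equation $n$ times in order to obtain a multivariate Campbell--Mecke-type identity, and then specialize the integrand to be independent of $\eta$. Concretely, by induction on $n$, I would establish that for every non-negative measurable $F:\mathbb X^n\times \mathcal N\to[0,\infty]$,
\begin{equation*}
\mathsf E\Bigl[\int_{\mathbb X^n} F(x_1,\ldots,x_n;\eta)\,\dd\eta^{(n)}\Bigr] = \int_{\mathbb X^n} z(x_1)\cdots z(x_n)\,\e^{-H_n(x_1,\ldots,x_n)}\,\mathsf E\Bigl[F\bigl(x_1,\ldots,x_n;\eta+\textstyle\sum_i\delta_{x_i}\bigr)\,\e^{-\sum_i W(x_i;\eta)}\Bigr]\dd\lambda^n.
\end{equation*}
Specializing to an $\eta$-independent integrand $F(x_1,\ldots,x_n;\eta)=g(x_1,\ldots,x_n)$ and comparing with the definition of the factorial moment measure $\alpha_n$ then yields both the existence of $\rho_n$ as a Radon-Nikod\'ym derivative with respect to $\lambda^n$ and the claimed explicit formula \eqref{eq:correp}.

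For the induction, the base case $n=1$ is the GNZ equation itself (recall $H_1\equiv 0$). For the step from $n-1$ to $n$, I would use the factorial-measure recursion
\begin{equation*}
\int F\,\dd\eta^{(n)}=\int_{\mathbb X}\Bigl(\int_{\mathbb X^{n-1}} F(x_1,\ldots,x_n;\eta)\,\dd(\eta-\delta_{x_1})^{(n-1)}(x_2,\ldots,x_n)\Bigr)\dd\eta(x_1),
\end{equation*}
apply the single-variable GNZ equation to the outer $\dd\eta(x_1)$-integration (which replaces $\eta-\delta_{x_1}$ by $\eta$ at the cost of a factor $z(x_1)\e^{-W(x_1;\eta)}$), and then apply the inductive hypothesis to the resulting $(n-1)$-fold factorial-measure expectation, whose integrand now depends on $\eta$ through the factor $\e^{-W(x_1;\eta)}$. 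The accumulation of $H_n=\sum_{i<j}v(x_i,x_j)$ is driven by the identity $W(x_i;\eta+\delta_{x_j})=W(x_i;\eta)+v(x_i,x_j)$, which inserts exactly one factor $\e^{-v(x_i,x_j)}$ for each pair $i<j$ over the course of the $n$ applications of GNZ.

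The main issue is combinatorial bookkeeping: one must verify that the $n$ successive shifts $\eta\mapsto\eta+\delta_{x_k}$ generate the interaction sum $H_n$ once and only once, with no double-counting and no spurious permutation of indices. This is routine but slightly delicate, and is best done by carefully propagating the inductive formula through the $k$-th step and tracking which $W$-terms receive new $v(x_k,\cdot)$ summands. Measurability of the map $(x_1,\ldots,x_n)\mapsto \int_{\mathcal N}\e^{-\sum_i W(x_i;\eta)}\dd\mathsf P(\eta)$ follows from Fubini-Tonelli since $v\geq 0$ and the integrand is non-negative, while the conventions $\e^{-\infty}=0$ and $0\cdot\infty=0$ stated in the setup absorb any hard-core singularities without further analysis.
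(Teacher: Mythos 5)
Your proposal is correct and follows essentially the same route as the paper: Appendix~\ref{app:knownstuff} proves the lemma by iterating the GNZ equation inductively to obtain exactly the multivariate identity you state (labelled \eqref{eq:mgnz} there, with $\e^{-H(x_1,\ldots,x_m\mid\eta)}=\e^{-H_m(\vect x)}\e^{-\sum_i W(x_i;\eta)}$), and then specializes to an $\eta$-independent integrand to read off $\rho_n$ as the Radon--Nikod{\'y}m derivative of $\alpha_n$.
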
 

\noindent The lemma allows us to adopt Eq.~\eqref{eq:correp} as the definition of the correlation functions, thus removing indeterminacies on null sets from the definition as a Radon-Nikod{\'y}m derivative. The lemma is well-known,
%(see e.g.~\cite{kuna1999dissertation})
 for the reader's convenience we present a proof in Appendix~\ref{app:knownstuff}. A simple consequence that we use repeatedly is the estimate 
\be \label{eq:nonnegbound}
	\rho_n(x_1,\ldots,x_n) \leq z(x_1)\cdots z(x_n). 
\ee
The \emph{factorial cumulant densities} $\rho_n^\mathsf T(x_1,\ldots,x_n)$, called \emph{truncated correlation functions} in statistical mechanics, are uniquely defined by the requirement that they are symmetric and satisfy, for all $n\in \N$,
\be \label{eq:truncated-def}
	\rho_n(x_1,\ldots,x_n) = \sum_{r=1}^n \sum_{\{V_1,\ldots,V_r\}\in \mathcal P_n} \prod_{k=1}^r \rho_{\#V_k}^\mathsf T \bigl( (x_i)_{i\in V_k}\bigr) 
\ee
where $\mathcal P_n$ is the collection of set partitions of $\{1,\ldots,n\}$. For example, $\rho_1^\mathsf T(x_1) = \rho_1(x_1)$ and $\rho_2^\mathsf T(x_1,x_2) = \rho_2(x_1,x_2) - \rho_1(x_1) \rho_1(x_2)$.  

\begin{example}[Mixture of hard spheres and Poisson exclusion process with random radii]
	Let $\mathbb X = \R^d \times \R_+$, equipped with the Euclidean distance, and $\lambda$ the Lebesgue measure. Consider the interaction $v((x,r),(y,R)) := \infty\,\1_{\{|x-y|\leq R+r\}}$ so that 
	\bes
		\e^{- v((x,r),(y,R))} = \1_{\{ B_d(x,r) \cap B_d(y,R) = \varnothing\}}
	\ees
	with $B_d(x,r)\subset \R^d$ the closed ball of radius $r$ centered at $x$. Let $\mathsf P\in \mathscr G(z)$. Any $\eta\in \mathcal N$ can be written as $\eta = \sum_{j=1}^\kappa \delta_{(x_j,r_j)}$, and we have $B_d(x_i,r_i)\cap B_d(x_j,r_j) =\varnothing$ for all $i\neq j$, $\mathsf P$-almost surely. The measure describes a mixture of spheres of different radii, distinct spheres are not allowed to overlap. 
 $\mathsf P$ is the distribution of a marked point process on $\R^d$, with marks in $\R_+$. A priori, the points are Poisson distributed with intensity $z_0(x) = \int_0^\infty z(x,r) \dd r$ (assuming that each $z_0(x)$ is finite), and sphere at point $x$ has a random radius with probability density function $r\mapsto p(x,r)$ given by $p(x,r) = z(x,r) /z_0(x)$. We can think of $\mathsf P$, roughly, as the a priori distribution conditioned on non-overlap of the spheres. 
\end{example} 

\noindent The example is interesting because of deep connections between mixtures of hard spheres and Boolean percolation, see~\cite{hofertemmel-houdebert2017,torquato2012} and    Section~\ref{sec:boolean}. 

\subsection{Uniqueness and convergent expansions in infinite volume} \label{sec:mainmain}
 
Our main results are  (i) a sufficient condition for the uniqueness of Gibbs measures, and (ii) expansions of the log-Laplace functional, correlation functions and truncated correlation functions (factorial moment and factorial cumulant densities) of the Gibbs measure. Our sufficient condition for the absolute convergence of the expansions generalizes a condition by Fern{\'a}ndez and Procacci~\cite{fernandez-procacci2007} for hard-core interactions and discrete spaces $\mathbb X$ to non-negative interactions and more general spaces $\mathbb X$. 
 
Define \emph{Mayer's $f$-function} 
\bes
	f(x,y) := \e^{ - v(x,y)} -1\quad (x,y\in \mathbb X). 
\ees
Our first result is a sufficient criterion for  uniqueness of Gibbs measures. The criterion also implies convergence of the expansions and analyticity of generating functions. The reader primarily intereted in uniqueness should thus be warned that the condition might be way too strong for his needs and other approaches may work in a bigger domain, see the discussion in~\cite{fernandez-ferrari-garcia2001lossnetwork}.

\begin{theorem} \label{thm:uniqueness}
	Assume $f(x,y) \leq 0$ for $\lambda^2$-almost all $(x,y)\in \mathbb X^2$ and $\int_B z\dd \lambda < \infty$ for all $B\in \mathcal X_\mathsf b$. Suppose that there exists a measurable function $a:\mathbb X\to \R_+$ and some $t>0$ such that 
	\be \label{eq:suff1}
		 \sum_{k=1}^\infty \frac{\e^{tk} }{k!}\int_{\mathbb X^k} \prod_{j=1}^k  |f(x_0,y_j)| \prod_{1\leq i < j \leq k} (1+ f(y_i,y_j)) \prod_{j=1}^k z(y_j) \e^{a(y_j)} \dd \lambda^k(\vect y) \leq \e^{a(x_0)} - 1\tag{$\mathsf{FP}_t$}
	\ee
	for $\lambda$-almost all $x_0\in \mathbb X$. Then $\#\mathscr G(z) = 1$.
\end{theorem}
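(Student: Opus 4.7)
The plan is to show that any two measures $\mathsf P,\tilde{\mathsf P}\in\mathscr G(z)$ must share the same correlation functions, and then to upgrade this equality to $\mathsf P=\tilde{\mathsf P}$ via a moment-type argument based on the Poisson bound~\eqref{eq:nonnegbound}. The workhorse is the Kirkwood--Salsburg (KS) equation (Lemma~\ref{lem:ks}, derivable from GNZ as in Appendix~\ref{app:knownstuff}),
\begin{equation*}
\rho_{n+1}(x_0,\vect{x})=z(x_0)\prod_{i=1}^{n}\bigl(1+f(x_0,x_i)\bigr)\!\left(\rho_n(\vect{x})+\sum_{k\geq 1}\frac{1}{k!}\int_{\mathbb X^k}\prod_{j=1}^k f(x_0,y_j)\,\rho_{n+k}(\vect{x},\vect{y})\,\dd\lambda^k(\vect{y})\right),
\end{equation*}
valid for $\lambda^{n+1}$-a.e.\ $(x_0,\vect{x})$. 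Writing this identity for both $\mathsf P$ and $\tilde{\mathsf P}$ and subtracting, the leading $\rho_n(\vect{x})$ term cancels, so the differences $D_m:=\rho_m-\tilde\rho_m$ satisfy a \emph{homogeneous} linear KS-type equation in which only the higher-order $D_{n+k}$ with $k\geq 1$ appear.

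I would measure the size of these differences in the weighted essential supremum
\begin{equation*}
M:=\sup_{m\geq 1}\operatorname*{ess\,sup}_{\vect{x}\in\mathbb X^m}\frac{|D_m(\vect{x})|}{\gamma_m(\vect{x})},\qquad \gamma_m(\vect{x}):=\prod_{j=1}^m z(x_j)\e^{a(x_j)}\prod_{1\leq i<j\leq m}\bigl(1+f(x_i,x_j)\bigr).
\end{equation*}
By Lemma~\ref{lem:correp} (using $\e^{-\sum W}\leq 1$) together with $a\geq 0$ one gets $\rho_m\leq\gamma_m$ pointwise, hence $M\leq 2<\infty$. Substituting the homogeneous KS identity, bounding $|D_{n+k}|\leq M\gamma_{n+k}$, and using $(1+f)\leq 1$ to discard the cross factors $\prod_{i\leq n,\,j\leq k}(1+f(x_i,y_j))$ appearing in $\gamma_{n+k}(\vect{x},\vect{y})$, one factors $z(x_0)\prod_i(1+f(x_0,x_i))\,\gamma_n(\vect{x})$ out of the whole expression and is left with exactly the integral on the left-hand side of~\eqref{eq:suff1}. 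Because $\e^{tk}\geq\e^t$ for $k\geq 1$, hypothesis~\eqref{eq:suff1} then yields an extra factor $\e^{-t}(\e^{a(x_0)}-1)$; dividing by $\gamma_{n+1}(x_0,\vect{x})=z(x_0)\e^{a(x_0)}\prod_i(1+f(x_0,x_i))\,\gamma_n(\vect{x})$ produces
\begin{equation*}
\frac{|D_{n+1}(x_0,\vect{x})|}{\gamma_{n+1}(x_0,\vect{x})}\leq M\,\e^{-t}\bigl(1-\e^{-a(x_0)}\bigr)\leq M\,\e^{-t}.
\end{equation*}
Taking essential suprema over $(x_0,\vect{x})$ for every $n\geq 0$ gives $M\leq\e^{-t}M$, so $t>0$ forces $M=0$; hence $\rho_m=\tilde\rho_m$ $\lambda^m$-a.e.\ for every $m\geq 1$.

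To conclude $\mathsf P=\tilde{\mathsf P}$, I invoke the moment problem for point processes: by~\eqref{eq:nonnegbound} the factorial moments of $\eta(B)$ for $B\in\mathcal X_\mathsf b$ are dominated by those of a Poisson random variable with parameter $\int_B z\,\dd\lambda$, so the joint distribution of any finite tuple $(\eta(B_1),\ldots,\eta(B_k))$ is determined by its mixed factorial moments (Carleman's criterion), and a monotone-class argument then pins $\mathsf P$ down on $\mathfrak N$. The most delicate step is the weighted contraction: the FP weight $\gamma_n$ is tailored so that the $\prod(1+f(y_i,y_j))$ factors in~\eqref{eq:suff1} arise precisely from the internal pair factors of $\gamma_{n+k}$, while the non-positivity $f\leq 0$ is exactly what licenses dropping the cross factors $\prod(1+f(x_i,y_j))$ without spoiling the sign---this is where the FP improvement over the Kotecký--Preiss condition enters, and any variant of the argument that forgets the $\prod(1+f)$ weight would give a strictly weaker uniqueness criterion.
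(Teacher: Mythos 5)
Your overall strategy---two Gibbs measures have correlation functions solving the same Kirkwood--Salsburg system, the difference solves the homogeneous system, a weighted contraction forces the difference to vanish, and the Poisson-dominated moments \eqref{eq:nonnegbound} then determine the measure---is exactly the paper's strategy (Section~\ref{sec:ks} runs the contraction on $\vect\rho=\vect e_z+K_z\vect\rho$ rather than on the difference, but that is the same argument). However, there is a genuine error at the heart of your contraction estimate. When you subtract the two instances of \eqref{eq:ks}, the term $\rho_n(\vect x)$ does \emph{not} cancel: it combines with $\tilde\rho_n(\vect x)$ into $D_n(\vect x)$, so the homogeneous equation reads
\begin{equation*}
D_{n+1}(x_0,\vect x)=z(x_0)\prod_{i=1}^n\bigl(1+f(x_0,x_i)\bigr)\Bigl(D_n(\vect x)+\sum_{k\geq 1}\frac{1}{k!}\int_{\mathbb X^k}\prod_{j=1}^k f(x_0,y_j)\,D_{n+k}(\vect x,\vect y)\,\dd\lambda^k(\vect y)\Bigr),
\end{equation*}
with the $D_n$ term present for every $n\geq 1$ (it drops out only for $n=0$, where $\rho_0=\tilde\rho_0=1$). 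With your weight $\gamma_m$ (which carries no factor $\e^{tm}$), the $D_n$ term contributes $M\gamma_n(\vect x)$ to the bracket, and the correct bound after dividing by $\gamma_{n+1}$ is $M\bigl(\e^{-a(x_0)}+\e^{-t}(1-\e^{-a(x_0)})\bigr)$, not $M\e^{-t}(1-\e^{-a(x_0)})$. Since nothing forces $a$ to be bounded away from $0$, this factor can be arbitrarily close to $1$ and no contraction results; extracting $\e^{-t}$ from $\e^{tk}\geq\e^t$ only helps the $k\geq1$ terms, never the $k=0$ term.

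The repair is precisely the paper's choice of norm (Lemma~\ref{lem:contract}): include the factor $\e^{tm}$ in the weight, i.e.\ measure $|D_m|$ against $\e^{tm}\gamma_m$. Then the $D_n$ term enters with weight $\e^{tn}=\e^{-t}\cdot\e^{t(n+1)}$, the sum terms carry $\e^{t(n+k)}$ so that \eqref{eq:suff1} applies verbatim and bounds the bracket by $\e^{tn}\gamma_n(\vect x)\,\e^{a(x_0)}$, and one lands on $\|K_z\|\leq\e^{-t}<1$ uniformly. Two smaller points: your normalization $M\leq 2$ via $\rho_m\leq\gamma_m$ is fine (with the convention $0/0=0$ where $\gamma_m$ vanishes), and the identification of the measure from its correlation functions via the Poisson moment bound is also what the paper does; but the statement $\#\mathscr G(z)=1$ additionally requires \emph{existence} of a Gibbs measure, which your argument does not address and which the paper supplies separately (Theorem~\ref{thm:existence} in Appendix~\ref{app:existence}).
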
 

\noindent The theorem is proven in Section~\ref{sec:ks}, results for $t=0$ are provided in Section~\ref{sec:fivo}.  In view of $1+f(y_i,y_j)\leq 1$, a sufficient condition for~\eqref{eq:suff1} to hold true is that 
\be \label{eq:suff2} 
	 \e^t\int_\mathbb X |f(x_0,y)|\e^{a(y)} z(y) \dd \lambda(y) \leq a(x_0)\tag{$\mathsf{KPU}_t$}
\ee
for $\lambda$-almost all $x_0\in \mathbb X$. Condition~\eqref{eq:suff2} with $t=0$ is discussed in depth in~\cite{ueltschi2004cluster}; it is a continuum version of the Koteck{\'y}-Preiss condition for polymer systems~\cite{kotecky-preiss1986}. Condition~\eqref{eq:suff2} with $t>0$ is a special case of  Assumption~3 in~\cite{poghosyan2013}, who considered both non-negative and attractive potentials. Condition~\eqref{eq:suff1} with $t=0$ corresponds to the criterion for discrete polymer models by Fern{\'a}ndez and Procacci~\cite{fernandez-procacci2007} and for hard spheres by Fern{\'a}ndez, Procacci and Scoppola~\cite{fernandez-procacci-scoppola2007}. 

\begin{example}[Hard spheres / Poisson exclusion process] \label{ex:hard-spheres} 
	Choose $\mathbb X = \R^d$ with the Euclidean norm $|\cdot|$ and the Lebesgue measure $\lambda = \mathrm{Leb}$. Let $v(x,y) =\infty \1_{\{|x-y|\leq r\}}$ with $r>0$. Suppose that 
	\be \label{eq:hard-spheres} 
		 z\,\mathrm{Leb}(B(0,r))<1 /\mathrm{e}\simeq 0.36787
	 \ee
   Because of $1/\mathrm e = \max_{a>0} a \exp(-a)$, we can find $a,t>0$ such that 
\bes
	\e^t \int_{\mathbb R^d} |f(x,y)| \e^a z\dd y = \e^t	 \mathrm{Leb}\, (B(0,r)) \e^a \leq a
\ees
and conditions~\eqref{eq:suff2} and~\eqref{eq:suff1} holds true for some constant function $a(y) = a$.
The criterion~\eqref{eq:hard-spheres} is well-known~\cite[Chapter 4]{ruelle1969book}. As shown in~\cite{fernandez-procacci-scoppola2007}, for a two-dimensional gas of hard spheres, condition~\eqref{eq:suff1} allows one to improve~\eqref{eq:hard-spheres} to 
$z\,\mathrm{Leb}(B(0,r))<0.5107$. 
\end{example}

The condition~\eqref{eq:suff1} is enough to ensure that generating functionals and correlation functions admit convergent expansions; for homogeneous models (constant activity $z(x) =z$), the expansions are power series in $z$. Furthermore the expansion coefficients can be expressed as sums over weighted graphs. Let $\mathcal G_n$ be the collection of graphs $G=(V,E)$ with vertex set $V=\{1,\ldots,n\}=[n]$ and edge set $E\subset \{\{i,j\}\mid i,j\in V,\, i \neq j\}$. We write $E(G) =E$ for the edge set of $G$. We define graph weights 
\be
	w(G;x_1,\ldots,x_n):= \prod_{\{i,j\} \in E(G)} f(x_i,x_j)
\ee
Let $\mathcal C_n\subset \mathcal G_n$ be the collection of connected graphs with vertex set $[n]$ and 
\be \label{eq:ursell}
	\varphi_n^\mathsf T(x_1,\ldots,x_n):= \sum_{G\in \mathcal C_n} w(G;x_1,\ldots,x_n)
\ee
the $n$-th \emph{Ursell function}. 
Let $\lambda_z$ be the measure on $\mathbb X$ that is absolutely continuous with respect to $\lambda$ with Radon-Nikod{\'y}m derivative $z(\cdot)$. For $h:\mathbb X\to \R \cup\{\infty\}$, we introduce the integrability condition
\be \label{eq:hcondition} 
	\int_\mathbb X |\e^{-h(x)}-1| \e^{a(x)} \dd \lambda_z(x) < \infty.
\ee

\begin{theorem} \label{thm:log-laplace}
	Under the conditions of Theorem~\ref{thm:uniqueness}, the log-Laplace functional of the unique Gibbs measure $\mathsf P\in \mathscr G(z)$ satisfies  
	\be \label{eq:log-laplace}
		\log \mathsf E\Bigl[ \e^{-\int_\mathbb X h\dd \eta}\Bigr]	
			= \sum_{n=1}^\infty \frac{1}{n!} \int_{\mathbb X^n}\bigl( \e^{-\sum_{j=1}^n h(x_j)} - 1\bigr) \varphi_n^\mathsf T(x_1,\ldots,x_n)  \dd \lambda_z^n(\vect x)
	\ee
	for all measurable $h:\mathbb X\to [-t,\infty) \cup \{\infty\}$ or $h:\mathbb X\to \{ s \in \mathbb C\mid \, \Re s \geq - t \}$ that satisfy~\eqref{eq:hcondition}. Moreover 	
	\be \label{eq:keyconv}
		\sum_{n=2}^\infty \frac{\e^{t(n-1)}}{(n-1)!}\int_{\mathbb X^{n-1}} |\varphi_n^\mathsf T(x_1,\ldots,x_n)|  \dd \lambda_z(x_2)\cdots \dd \lambda_z(x_n) \leq \e^{a(x_1)}-1
	\ee
	for $\lambda$-almost all $x_1\in \mathbb X$. 	 
	%, with absolutely convergent integrals and series. 
\end{theorem}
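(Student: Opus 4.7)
The plan is to establish the convergence bound \eqref{eq:keyconv} first, since it drives both the absolute convergence of the series in \eqref{eq:log-laplace} and the legitimate exchange of sum and integral needed to identify that series with the log-Laplace functional. For \eqref{eq:keyconv}, the hypothesis $v \geq 0$ gives $|f| = -f$ and $1 + f = \e^{-v} \in [0,1]$, which is what makes a tree-graph inequality available. Following Fern\'andez--Procacci, I fix a partition scheme (e.g.\ the Penrose scheme refined to keep only edges between siblings in a rooted spanning tree) that assigns to each connected graph $G \in \mathcal{C}_n$ a spanning tree $T(G)$ in a many-to-one fashion, and reorganize \eqref{eq:ursell} as a sum over spanning trees. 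Because the surviving non-tree edges carry factors $1 + f \in [0,1]$ rather than the indefinite signs that would come from an arbitrary resummation, this yields the pointwise tree-graph bound
\[
|\varphi_n^\mathsf T(x_1,\ldots,x_n)| \leq \sum_{T \in \mathcal{T}_n} \prod_{\{i,j\} \in E(T)} |f(x_i,x_j)| \prod_{\{i,j\} \in \mathcal{F}(T)} \bigl(1 + f(x_i,x_j)\bigr),
\]
with $\mathcal{T}_n$ the spanning trees on $[n]$ and $\mathcal{F}(T)$ a set of sibling pairs determined by $T$.

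Next I root every tree at vertex $1$ and integrate the leaves from the bottom up. For a vertex $v$ with children $y_1,\ldots,y_k$, the integral over the children and the subtrees hanging off them is controlled by the left-hand side of \eqref{eq:suff1} with $x_0 = x_v$, provided each subtree rooted at $y_j$ has already been bounded by $\e^{a(y_j)}$; an induction on the number of vertices, using $\e^{a(y_j)} - 1 \leq \e^{a(y_j)}$, then produces the bound $\e^{a(x_1)} - 1$. The factor $\e^{t(n-1)}$ distributes one $\e^t$ per tree edge (one edge per non-root vertex), matching the $\e^{tk}$ in \eqref{eq:suff1} at each step of the recursion. The main technical obstacle here is aligning the partition scheme precisely with the non-tree-edge pattern appearing in \eqref{eq:suff1}: the sibling structure in $\mathcal{F}(T)$ must be exactly the one under which \eqref{eq:suff1} closes the induction. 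This alignment is what upgrades the simpler Koteck\'y--Preiss condition \eqref{eq:suff2} to the full Fern\'andez--Procacci criterion \eqref{eq:suff1}.

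With \eqref{eq:keyconv} in hand, I derive \eqref{eq:log-laplace} by first establishing it in finite volume and then passing to a thermodynamic limit. In finite volume $\Lambda \in \mathcal{X}_\mathsf b$ the Gibbs measure has an explicit density with respect to the Poisson process with intensity $\lambda_z|_\Lambda$, and a direct manipulation shows that $\mathsf E[\e^{-\int h\, \dd\eta}]$ equals a ratio of partition functions with activities $z \e^{-h}$ and $z$; applying the cluster expansion of $\log \Xi_\Lambda$ developed in Section~\ref{sec:finite-volume} to each of them produces \eqref{eq:log-laplace} with $\mathbb X$ replaced by $\Lambda$. To lift this to $\mathbb X$ I exhaust $\mathbb X$ by $\Lambda_N \uparrow \mathbb X$: the left-hand side converges by Theorem~\ref{thm:uniqueness} and standard weak convergence of finite-volume Gibbs measures, while on the right-hand side I telescope
\[
\e^{-\sum_{j=1}^n h(x_j)} - 1 = \sum_{j=1}^n \bigl(\e^{-h(x_j)} - 1\bigr) \prod_{i<j} \e^{-h(x_i)}
\]
to extract at least one factor $|\e^{-h(x_j)}-1|\e^{a(x_j)}$ per summand; using $\Re h \geq -t$ to bound $|\prod_{i<j}\e^{-h(x_i)}| \leq \e^{t(j-1)}$ and pairing the remaining $n-1$ integrations with \eqref{eq:keyconv} yields a summable dominant, and \eqref{eq:hcondition} guarantees that the total is finite. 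Dominated convergence then transports the finite-volume identity to the claim. The second obstacle is precisely this bookkeeping: the telescoping must distribute the $h$-dependence in a way that the $\e^{t(n-1)}$ factor in \eqref{eq:keyconv} absorbs cleanly against the $\e^t$ factors produced by the $\Re h \geq -t$ bound on $\prod_{i<j} \e^{-h(x_i)}$.
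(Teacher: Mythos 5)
Your proposal is essentially correct but follows a genuinely different route from the paper on both halves of the theorem. For the bound \eqref{eq:keyconv}, you use the Fern\'andez--Procacci tree-graph inequality plus a bottom-up induction over rooted trees; the paper instead obtains \eqref{eq:keyconv} as the special case $k=1$ of \eqref{eq:correlations-bound}, which is proven in Section~\ref{sec:correlations} by iterating the Kirkwood--Salsburg operator (Proposition~\ref{prop:picard}), using the identity $\mathcal D_{1,n}=\mathcal C_n$, i.e.\ $\psi_{1,n}=\varphi_n^{\mathsf T}$. The paper does acknowledge your route explicitly (after Proposition~\ref{prop:fptrees}) as an alternative way to recover \eqref{eq:keyconv}, so this part is sound; the alignment you worry about between the sibling pairs $\mathcal F(T)$ and the factors $\prod_{i<j}(1+f(y_i,y_j))$ in \eqref{eq:suff1} is exactly what the weights $\tilde w(T,r;\cdot)$ in Section~\ref{sec:branching} encode. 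For the identity \eqref{eq:log-laplace}, you go through finite volume ($\mathsf E_\Lambda[\e^{-\int h\dd\eta}]=\Xi_\Lambda(z\e^{-h})/\Xi_\Lambda(z)$) and a thermodynamic limit, whereas the paper works directly with the infinite-volume measure: it shows the tilted measure $\mathsf P_{sh}$ is Gibbs at activity $z\e^{-sh}$ (Lemma~\ref{lem:hgibbs}), proves $F'(s)=G'(s)$ for the two sides viewed as functions of a tilting parameter $s$ using the one-point expansion \eqref{eq:one-point-expansion}, and then extends from $h\geq 0$ to $\Re h\geq -t$ by identifying Janossy densities (Lemmas~\ref{lem:right} and~\ref{lem:almostthere}). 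The paper's route buys independence from any statement about convergence of finite-volume Gibbs measures; your route is more classical and reuses Theorem~\ref{thm:finite-volume}, but at the cost of the limit step discussed next.

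The one step you should not wave through as ``standard'' is the convergence of the left-hand side, $\mathsf E_{\Lambda_N}[\e^{-\int h\dd\eta}]\to\mathsf E[\e^{-\int h\dd\eta}]$. The paper explicitly declines to prove convergence of $(\mathsf P_{\Lambda_N})$ to the infinite-volume Gibbs measure in Section~\ref{sec:fivo} (``beyond the scope of this article''). Under the hypotheses of Theorem~\ref{thm:uniqueness} the gap is fillable, but you must assemble it: Lemma~\ref{lem:subsequence} gives subsequential convergence of $(\mathsf P_{\Lambda_N})$ in the topology of local convergence against local, exponentially bounded test functions; Theorem~\ref{thm:existence} shows every such limit lies in $\mathscr G(z)$; uniqueness then forces the full sequence to converge to the unique $\mathsf P$, and for $h$ of bounded support with $\Re h\geq -t$ the test function $\eta\mapsto\e^{-\int h\dd\eta}$ is indeed local and bounded by $\e^{t\eta(\Delta)}$, so it is admissible. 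Without citing these ingredients the thermodynamic-limit step is a genuine hole; with them, your argument closes. The remaining bookkeeping (the telescoping bound \eqref{eq:trick}, condition \eqref{eq:hcondition}, and dominated convergence to pass from bounded-support $h$ to general $h$) matches the paper's own final step.
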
 

\noindent 
The convergence of the right-hand side of~\eqref{eq:log-laplace} follows from the inequality~\eqref{eq:keyconv} and the estimate 
\be \label{eq:trick}
	\bigl|\e^{-\sum_{j=1}^n h(x_j)} - 1\bigr| = \bigl| \sum_{j=1}^n (\e^{- h(x_j)} - 1) \e^{ - \sum_{i=1}^{j-1} h(x_i)}\bigr| \leq \e^{(n-1) t} \sum_{j=1}^n \bigl|\e^{- h(x_j)}-1\bigr|.
\ee

%\begin{remark} [$t=0$] If  $z$ satisfies the bound~\eqref{eq:suff1} with $t=0$ instead of $t>0$, then~\eqref{eq:keyconv} stays true and there is a Gibbs measure $\mathsf P\in \mathscr G(z)$ with log-Laplace functional~\eqref{eq:log-laplace}, however we do not know whether the Gibbs measure is unique. 
%\end{remark} 
%

\begin{remark} [Cumulants] 
Complex parameters allow us to extract bounds on cumulants via contour integrals. For example, let $\Lambda\in \mathcal X$. 
Applying Theorem~\ref{thm:log-laplace} to $- h = s N_\Lambda$ with $|s|\leq t$, we obtain 
\bes
	\log \mathsf E\Bigl[\e^{sN_\Lambda}\Bigr] = \sum_{\ell=1}^\infty \frac{\kappa_\ell}{\ell!} s^\ell 
\ees
with 
\bes
	|\kappa_\ell| = \ell ! \Bigl| \frac{1}{2\pi \mathrm{i}}\oint \log \mathsf E\Bigl[\e^{sN_\Lambda}\Bigr] \frac{\dd s}{s^{\ell+1}}\Bigr| \leq \frac{\ell!}{t^\ell} \int_{\Lambda} \e^a \dd \lambda_z. 
\ees
See \cite{bryc1993} for applications to central limit theorems. 
\end{remark}

\begin{remark}[Signed L{\'e}vy measure]
Eq.~\eqref{eq:log-laplace} can be rewritten as
\be \label{eq:log-laplace2}
	\log \mathsf E\Bigl[ \e^{-\int_\mathbb X h\dd \eta}\Bigr]	
			= \int_{\mathcal N_\mathsf f} (\e^{-\int_\mathbb X h\dd \eta} - 1) \dd \Theta(\eta)
\ee
where the signed measure $\Theta$ on $\mathcal N_\mathsf f$ is defined by 
\be
	\int_{\mathcal N_\mathsf f} g\dd \Theta = \sum_{n=1}^\infty \frac{1}{n!}\int_{\mathbb X^n} g(\delta_{x_1}+\cdots + \delta_{x_n})  \varphi_n^\mathsf T(x_1,\ldots,x_n) \dd \lambda_z^n(\vect x).
\ee
Log-Laplace transforms such as~\eqref{eq:log-laplace2}---but with non-negative measures $\Theta$---appear naturally in the context of \emph{cluster point processes} and \emph{infinitely divisible point processes} \cite[Chapter 10.2]{daley-verejones2008vol2}. Following Nehring, Poghosyan and Zessin~\cite{nehring-poghosyan-zessin2013} we may call $\Theta$ the \emph{pseudo-} or \emph{signed L{\'e}vy measure} of $\mathsf P$ (compare Eq.~\eqref{eq:levy} below). 
\end{remark}

\begin{theorem} \label{thm:truncated}
	Under the conditions of Theorem~\ref{thm:uniqueness}, the truncated correlation functions (factorial cumulant densities) of the unique Gibbs measure $\mathsf P \in \mathscr G(z)$ satisfy 
	\begin{multline} \label{eq:truncated}
		\rho_n^\mathsf T(x_1,\ldots,x_n) \\= \prod_{j=1}^n z(x_j)  \left(\varphi_{n}^\mathsf T(x_1,\ldots,x_n)+ \sum_{k=1}^\infty \frac{1}{k!}\int_{\mathbb X^k} \varphi_{n+k}^\mathsf T(x_1,\ldots,x_n,y_1,\ldots,y_k) \dd \lambda_z^k(\vect y)\right)
	\end{multline}
	for all $n\in \N$ and $\lambda^n$-almost all $(x_1,\ldots,x_n) \in \mathbb X^n$, with 
	\be \label{eq:truncated-convergence}
		\sum_{k=0}^\infty \frac{1}{k!}\int_{\mathbb X^{n-1+k}} |\varphi_n^\mathsf T(x_1,\ldots,x_{n+k})| \dd \lambda_z(x_2)\cdots \dd \lambda_z(x_{n+k}) \leq \frac{(n-1)!}{(\e^t- 1)^{n-1}}\,  \e^{a(x_1)}. 
	\ee
\end{theorem}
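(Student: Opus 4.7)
\emph{Plan.} My plan is to split the proof into a convergence estimate and an algebraic identity, treating the former first. The analytic estimate~\eqref{eq:truncated-convergence} I would establish via a generating-function / Cauchy-integral trick, and the identity~\eqref{eq:truncated} I would then extract from Theorem~\ref{thm:log-laplace} by a test-function / coefficient-matching argument, the absolute convergence needed to justify Fubini being guaranteed by the first step.

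For the bound, by permutation symmetry of $\varphi_\ell^\mathsf T$ the left-hand side of~\eqref{eq:truncated-convergence} is a function of $x_1$ alone and equals
\[
	S_n(x_1) \;:=\; \sum_{k=0}^\infty \frac{I_{n+k}(x_1)}{k!},\qquad I_\ell(x_1) \;:=\; \int_{\mathbb X^{\ell-1}} |\varphi_\ell^\mathsf T(x_1,y_1,\ldots,y_{\ell-1})|\, \dd\lambda_z^{\ell-1}(\vect y),
\]
with $I_1\equiv 1$ since $\varphi_1^\mathsf T\equiv 1$. I would bundle these into the generating function
\[
	G(s;x_1) \;:=\; \sum_{\ell=1}^\infty \frac{I_\ell(x_1)}{(\ell-1)!}\, s^{\ell-1},
\]
and note two things: $G^{(n-1)}(1;x_1)=S_n(x_1)$, and~\eqref{eq:keyconv} reads verbatim $G(\e^t;x_1)\leq \e^{a(x_1)}$. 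Since the coefficients $I_\ell(x_1)/(\ell-1)!$ are non-negative, $G(\,\cdot\,;x_1)$ is analytic on $\{|s|<\e^t\}$ with $|G(s;x_1)|\leq G(|s|;x_1)\leq \e^{a(x_1)}$ on the closed disc $|s|\leq \e^t$. Applying Cauchy's integral formula on circles $|s-1|=r$ with $r\uparrow \e^t-1$ then gives
\[
	S_n(x_1) \;=\; G^{(n-1)}(1;x_1) \;\leq\; \frac{(n-1)!\, \e^{a(x_1)}}{(\e^t-1)^{n-1}},
\]
as required.

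For the identity, I would specialize Theorem~\ref{thm:log-laplace} to the test function $h=-\log(1+\phi)$, where $\phi:\mathbb X\to(-1,\e^t-1]$ is bounded and compactly supported, so that $\e^{-\int h\,\dd\eta}=\prod_{x\in\eta}(1+\phi(x))$ and the left-hand side of~\eqref{eq:log-laplace} becomes the factorial cumulant generating functional $\sum_{m\geq 1}\frac{1}{m!}\int \phi(x_1)\cdots\phi(x_m)\,\rho_m^\mathsf T\,\dd\lambda^m$. On the right-hand side, expanding $\prod_{j=1}^n(1+\phi(x_j))-1=\sum_{\emptyset\neq S\subseteq[n]}\prod_{i\in S}\phi(x_i)$, reorganizing by $m=|S|$ using the symmetry of $\varphi_n^\mathsf T$ together with $\binom{n}{m}/n!=1/(m!\,k!)$ for $k=n-m$, and invoking Fubini (justified by the first step) recasts the right-hand side of~\eqref{eq:log-laplace} as
\[
	\sum_{m=1}^\infty \frac{1}{m!} \int_{\mathbb X^m} \phi(x_1)\cdots\phi(x_m)\, \Bigl\{\prod_{j=1}^m z(x_j)\, \sum_{k=0}^\infty \frac{1}{k!}\int_{\mathbb X^k} \varphi_{m+k}^\mathsf T\, \dd\lambda_z^k\Bigr\}\, \dd\lambda^m.
\]
Matching the coefficient of $\prod_{j=1}^m\phi(x_j)$ on the two sides---for instance by taking $\phi=\sum_i s_i\,\1_{A_i}$ over disjoint bounded $A_i$ and invoking a standard monotone-class argument---then yields~\eqref{eq:truncated} $\lambda^n$-almost everywhere.

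\emph{The hard part} is obtaining the sharp prefactor in the bound. A term-by-term use of~\eqref{eq:keyconv} only delivers something of order $S_n(x_1)\lesssim ((n-1)/(te))^{n-1}(\e^{a(x_1)}-1)$, weaker than the desired constant by roughly a factor $\sqrt{n}\,(\e^t-1)^{n-1}/t^{n-1}$. The Cauchy-integral argument, which leverages the non-negativity of the coefficients of $G$ to promote the single scalar bound $G(\e^t;x_1)\leq \e^{a(x_1)}$ on the real axis into a uniform bound on the complex disc $\{|s|\leq \e^t\}$, is what produces the sharp prefactor $(n-1)!/(\e^t-1)^{n-1}$.
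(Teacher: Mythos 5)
Your proposal is correct and follows essentially the same route as the paper: the identity~\eqref{eq:truncated} is extracted from Theorem~\ref{thm:log-laplace} by rewriting the log-Laplace functional as the factorial cumulant generating functional in the variable $\e^{-h}-1$ and matching coefficients after a binomial reorganization using the symmetry of $\varphi_n^\mathsf T$, and the bound~\eqref{eq:truncated-convergence} is obtained from~\eqref{eq:keyconv} by a Cauchy contour-integral estimate on a disc of radius $\e^t-1$. The only cosmetic difference is that you center the Cauchy estimate at $s=1$ in the variable $s$ whereas the paper centers it at the origin in the variable $s-1$; the two are identical after reindexing.
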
 

\noindent The theorem  is proven in Section~\ref{sec:truncated}. The bound~\eqref{eq:truncated-convergence} proves the convergence of the expansion~\eqref{eq:truncated} in a suitable $L^1$-norm, in agreement with the spaces used by Kuna and Tsagkarogiannis~\cite{kuna-tsagkaro2016} and Last and Ziesche~\cite{last-ziesche2017} for the two-point function $\rho_2^\mathsf T$. Pointwise estimates better suited to estimating decay of correlations and mixing properties can be found in~\cite{ueltschi2004cluster}, see also~\cite[Corollary 5.2]{hanke2018} and the references therein for the two-point function. 

\begin{remark} [Positivity of $t$] For the bound~\eqref{eq:truncated-convergence} the strict positivity of $t$ is essential. Something similar happens for the factorial cumulants of random variables: Let $N$ be an $\N_0$-valued random variable that is infinitely divisible with finite L{\'e}vy measure $\nu =\sum_{n\in \N} \nu_n \delta_n$, then 
\be \label{eq:levy}
	\log \E\bigl[\e^{-\lambda N} \bigr] = \sum_{n=1}^ \infty (\e^{-\lambda n} - 1) \nu_n \qquad (\lambda\geq 0).
\ee
The factorial cumulants $(\alpha_n)_{n\in \N}$, if they exist, are defined as the coefficient in the asymptotic expansion 
$$
	\log \E\bigl[\e^{-\lambda N} \bigr] = \sum_{n=1}^ \infty \frac{\alpha_n}{n!} \bigl( \e^{-\lambda} - 1\bigr)^n \qquad (\lambda \searrow 0). 
$$
If the series $\sum_n \nu_n r^n$ has a radius of convergence $\e^t>1$, then $u\mapsto \log \E[(1+u)^N]$ is analytic in $|u|<\e^t-1$, which leads to bounds on its Taylor coefficients $\alpha_n$. If on the other hand $\sum_n \nu_n r^n$ diverges for $r>1$, then nothing can be said about the $\alpha_n$'s without additional information on the $\nu_n$'s. 
\end{remark}

 The correlation functions are associated with graphs subject to slightly weaker connectivity constraints. For $k \leq n$, let $\mathcal D_{k,n}\subset \mathcal G_n$ be the set of multi-rooted graphs $G$ with vertices $1,\ldots,n$, roots $1,\ldots,k$, such that every non-root vertex $j\in \{k+1,\ldots,n\}$ is connected to some root vertex $i\in\{1,\ldots,k\}$ by a path in $G$.  Notice $\mathcal D_{n,n} = \mathcal G_n$ and $\mathcal D_{1,n} = \mathcal C_n$. Set 
\be \label{eq:psidef}
	\psi_{k,n}(x_1,\ldots,x_n) = \sum_{G\in \mathcal D_{k,n}}w(G;x_1,\ldots,x_n).
\ee

\begin{theorem} \label{thm:correlations}
	Under the conditions of Theorem~\ref{thm:uniqueness}, the correlation functions (factorial moment densities) of the unique Gibbs measure $\mathsf P \in \mathscr G(z)$ are given by 
	\begin{multline}\label{eq:correlations-exp} 
		\rho_n(x_1,\ldots,x_n) \\= \prod_{j=1}^n z(x_j)  \left(\psi_{n,n}(x_1,\ldots,x_n)+ \sum_{k=1}^\infty \frac{1}{k!}\int_{\mathbb X^k} \psi_{n,n+k}(x_1,\ldots,x_n,y_1,\ldots,y_k) \dd \lambda_z^k(\vect y)\right)
	\end{multline}
	with 
	 \begin{multline}\label{eq:correlations-bound}
	 	\bigl|\psi_{n,n}(x_1,\ldots,x_n)\bigr|+ \sum_{k=1}^\infty \frac{\e^{tk}}{k!}\int_{\mathbb X^k} \bigl|\psi_{n,n+k}(x_1,\ldots,x_n,y_1,\ldots,y_k)\bigr| \dd \lambda_z^k(\vect y)\\
	 		\leq  \prod_{1\leq i<j\leq n}(1+f(x_i,x_j)) \e^{\sum_{i=1}^n a(x_i)}. 
	 \end{multline} 
\end{theorem}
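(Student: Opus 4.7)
The argument naturally splits into establishing the series identity~\eqref{eq:correlations-exp} and then the absolute bound~\eqref{eq:correlations-bound}; the identity is largely combinatorial given Theorem~\ref{thm:truncated}, while the bound rests on a tree-graph inequality.

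For~\eqref{eq:correlations-exp}, I invert~\eqref{eq:truncated-def} to $\rho_n(\vect x)=\sum_{\pi\in\mathcal P_n}\prod_{V\in\pi}\rho^\mathsf T_{\#V}((x_i)_{i\in V})$, and substitute into each factor the Ursell expansion of Theorem~\ref{thm:truncated}, so that every block $V$ contributes a sum over $k_V\geq 0$ auxiliary $\lambda_z$-distributed variables. Writing $k=\sum_V k_V$ and using $\prod_V (1/k_V!)=(1/k!)\binom{k}{k_{V_1},\ldots,k_{V_r}}$, the double sum over $\pi$ and $(k_V)$ is reorganized as a single sum over partitions $\tilde\pi$ of $[n+k]$ each of whose blocks meets $[n]$. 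Since any $G\in\mathcal D_{n,n+k}$ decomposes uniquely into its connected components---which form exactly such a partition---and the connected-graph weights on each block sum by~\eqref{eq:ursell} to $\varphi^\mathsf T_{\#\tilde V}$, I obtain $\sum_{\tilde\pi}\prod_{\tilde V}\varphi^\mathsf T_{\#\tilde V}=\psi_{n,n+k}(\vect x,\vect y)$ and hence~\eqref{eq:correlations-exp}. The interchange of sums and integrals is justified a posteriori by~\eqref{eq:correlations-bound}.

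For~\eqref{eq:correlations-bound}, I invoke the Penrose--Fern\'andez--Procacci tree-graph inequality: under $f\leq 0$, for each finite set $W$,
\bes
	\Bigl|\sum_{G\in\mathcal C_W}\prod_{e\in E(G)}f_e\Bigr|\leq \sum_{T\in\mathcal T_W}\prod_{e\in E(T)}|f_e|\prod_{e'\in\mathcal P(T)}(1+f_{e'}),
\ees
with $\mathcal P(T)$ the FP set of admissible non-tree edges. Applying this blockwise inside $|\psi_{n,n+k}|\leq \sum_{\tilde\pi}\prod_{\tilde V}|\varphi^\mathsf T_{\#\tilde V}|$ and choosing the Penrose partition so that every inter-root pair $\{i,j\}\subset[n]$ enters the admissible set (within a block when possible, otherwise between distinct blocks as a ``phantom'' edge contributing $1+f(x_i,x_j)$), I extract the prefactor $\prod_{1\leq i<j\leq n}(1+f(x_i,x_j))$ times a sum over spanning forests of $[n+k]$ each of whose trees contains at least one root, weighted by $\prod|f|$ on forest edges and $\prod(1+f)$ on residual admissible non-forest edges.

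Summing over $k$ with weights $\e^{tk}/k!$ and integrating against $\lambda_z^k(\dd\vect y)$, the rooted-forest structure permits a branching/generating-function factorization into $\prod_{i=1}^n T(x_i)$, where $T(x)$ is the minimal non-negative solution of the fixed-point recursion
\bes
	T(x)=1+\sum_{k=1}^\infty \frac{\e^{tk}}{k!}\int_{\mathbb X^k}\prod_{j=1}^k|f(x,y_j)|\prod_{1\leq i<j\leq k}(1+f(y_i,y_j))\prod_{j=1}^k T(y_j)z(y_j)\,\dd\lambda^k(\vect y).
\ees
Hypothesis~\eqref{eq:suff1} is precisely the statement that $T(x)=\e^{a(x)}$ is a supersolution of this recursion, so monotone iteration from $T_0\equiv 1$ gives $T(x)\leq \e^{a(x)}$, yielding~\eqref{eq:correlations-bound}. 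The main obstacle is the tree-graph reduction in the multi-rooted setting: the FP partition must be arranged so that all $\binom{n}{2}$ inter-root pairs produce full $(1+f)$ factors even when the two roots sit in distinct forest trees, while the residual forest sum genuinely decouples into $n$ copies of $T$ without spurious entanglements between roots that share a tree.
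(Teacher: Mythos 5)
Your proposal diverges from the paper's route and has two genuine gaps. First, a circularity: you derive~\eqref{eq:correlations-exp} by substituting the Ursell expansion of Theorem~\ref{thm:truncated} into the inversion of~\eqref{eq:truncated-def}. The combinatorial step itself is sound (graphs in $\mathcal D_{n,n+k}$ do decompose into connected components each meeting the root set, so $\psi_{n,n+k}=\sum_{\tilde\pi}\prod_{\tilde V}\varphi^{\mathsf T}_{\#\tilde V}$ over partitions whose blocks all meet $[n]$), but in this paper Theorem~\ref{thm:truncated} is itself deduced from Theorem~\ref{thm:log-laplace}, which in turn rests on the $n=1$ case of Theorem~\ref{thm:correlations}. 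Taking Theorem~\ref{thm:truncated} as given therefore proves nothing unless you supply an independent derivation of the Ursell expansion of $\rho_n^{\mathsf T}$, which you do not. The paper instead proves~\eqref{eq:correlations-exp} directly from the Kirkwood--Salsburg fixed-point equation $\vect\rho=\vect e_z+K_z\vect\rho$: it shows that the partial sums $\vect S_N$ of the multi-rooted graph expansion are exactly the Picard iterates $\sum_{\ell<N}K_z^\ell\vect e_z$, using a recursion for $\psi(I,J)$ under removal of one root (Lemma~\ref{lem:recursion}), and identifies the limit with $\vect\rho$ via the convergent Neumann series from the proof of Theorem~\ref{thm:uniqueness}. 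This is also where the representation gets tied to the actual Gibbs measure, a link your argument never makes.

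Second, the bound~\eqref{eq:correlations-bound}. You reduce it to a multi-rooted tree-graph inequality in which every inter-root pair $\{i,j\}\subset[n]$ must contribute a full $(1+f(x_i,x_j))$ factor while the residual rooted-forest sum decouples into $n$ independent copies of the Fern\'andez--Procacci tree generating function $\tilde T^\circ$. You correctly identify this as ``the main obstacle'' but then do not resolve it; the standard Penrose/Fern\'andez--Procacci partition schemes are stated for a single root, and arranging the admissible-edge sets so that all $\binom{n}{2}$ root pairs appear as $(1+f)$ factors without disturbing the factorization is precisely the nontrivial content. The paper sidesteps this entirely: the inter-root factors $\prod_{i\in I'}(1+f(x_{\iota(I)},x_i))$ emerge one root at a time from Lemma~\ref{lem:recursion}, and the bound is then obtained by a direct induction on the truncation level $N$ (Lemma~\ref{lem:recursion2} and Proposition~\ref{prop:picard}), using condition~\eqref{eq:suff1} with $t=0$ at each step and recovering the $t>0$ case by applying the $t=0$ estimate to the activity $z\e^{t}$. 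Your fixed-point/supersolution observation ($\e^{a}$ dominates the FP tree recursion) is correct and is essentially Proposition~\ref{prop:fptrees}, but without the multi-rooted reduction it does not yield~\eqref{eq:correlations-bound}.
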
 

\noindent The theorem is proven at the end of Section~\ref{sec:correlations}. \\

\noindent Let us briefly comment on the order in which Theorems~\ref{thm:uniqueness}, \ref{thm:log-laplace}, \ref{thm:truncated} and~\ref{thm:correlations} are proven. Theorem~\ref{thm:uniqueness} about the uniqueness of Gibbs measures is proven first with the help of Kirkwood-Salsburg equations, in Section~\ref{sec:ks}. Theorem~\ref{thm:correlations} is proven next, in Section~\ref{sec:correlations}, building on the Kirkwood-Salsburg equations introduced in Section~\ref{sec:ks}. The representation for one-point correlation functions is then used to prove Theorem~\ref{thm:log-laplace} in Section~\ref{sec:truncated}, and finally Theorem~\ref{thm:truncated} is deduced from Theorem~\ref{thm:log-laplace}, also in Section~\ref{sec:truncated}. 

\subsection{About the case $t=0$} \label{sec:fivo}

The strict positivity $t>0$ in condition~\eqref{eq:suff1} ensures that a certain linear operator $\vect K_z$ has operator norm $||\vect K_z||\leq \e^{-t}<1$, leading to the uniqueness of a fixed point problem underpinning the uniqueness of infinite-volume Gibbs measure (see Section~\ref{sec:ks}). If $t=0$, then the contractivity of the operator is lost and our approach no longer guarantees uniqueness of the infinite-volume Gibbs measure. Nevertheless, the condition~\eqref{eq:suff1} with $t=0$ stays useful. Roughly, the condition ensures that the cluster expansions for the correlation functions of Gibbs measures in finite volume $\Lambda$ (with empty boundary conditions) are absolutely convergent, uniformly in the volume $\Lambda$. The uniformity in the volume guarantees the existence of the infinite-volume limit. This reasoning is fairly standard in mathematical physics.

Let $\Lambda\in \mathcal X_\mathsf b$ be a bounded set (thus  $\lambda_z(\Lambda) < \infty$, ``finite volume'').  Assume $\lambda_z(\Lambda)>0$.  Let $\mathcal N_\Lambda = \{\eta\in\mathcal N\mid \forall A\in \mathcal X:\, \eta(A) = \eta(A\cap \Lambda) \}$ be the space of point configurations in $\Lambda$.  The space $\mathcal N_\Lambda$ is equipped with the trace of the $\sigma$-algebra $\mathfrak N$.
It is not too difficult to see that there is a uniquely defined measure $\mathsf P_\Lambda$ 
such that 
$$
	\mathsf E_\Lambda\Bigl [\int_\Lambda F(x,\eta) \dd \eta(x)\Bigr] = \int_\Lambda \mathsf E_\Lambda \Bigl[ F(x,\eta+ \delta_x) \e^{- W(x,\eta)} \Bigr] z(x) \dd \lambda(x)
$$
for all measurable $F:\Lambda\times \mathcal N\to [0,\infty)$. Moreover, the measure $\mathsf P_\Lambda$ is absolutely continuous with respect to the distribution of the Poisson point process with intensity measure $ z\1_\Lambda\,\dd \lambda$, with a Radon-Nikod{\'y}m derivative proportional to $\exp( - H(\eta))$, where $H(\delta_{x_1}+\cdots+ \delta_{x_n}) = H_n(x_1,\ldots,x_n)$. The measure $\mathsf P_\Lambda$ is usually called a finite-volume Gibbs measure with empty boundary conditions (because $H(\eta)$ does not include interactions with points outside $\Lambda$).

We formulate analogues of Theorems~\ref{thm:log-laplace} and~\ref{thm:correlations} before commenting on Theorem~\ref{thm:uniqueness}.  As explained earlier, Theorem~\ref{thm:truncated} really needs the condition $t>0$ and it has no direct  counterpart for $t=0$ (see however~\cite{ueltschi2004cluster}).

\begin{theorem} \label{thm:log-laplace-fivo}
	Assume $v(x,y)\geq 0$ for $\lambda^2$-almost all $(x,y)\in \mathbb X^2$ and $\int_B z \dd \lambda<\infty$ for all $B\in \mathcal X_\mathsf b$. Suppose that there exists a measurable function $a:\mathbb X\to \R_+$ such that condition~\eqref{eq:suff1} holds true with $t=0$. Then 
	\be \label{eq:fivo2}
		\sum_{n=1}^\infty \frac{1}{n!}\int_{\mathbb X^{n}} |\varphi_n^\mathsf T(x_0,\ldots,x_n)|  \dd \lambda_z(x_1)\cdots \dd \lambda_z(x_n) \leq \e^{a(x_0)}-1
	\ee
	for $\lambda$-almost all $x_0\in \mathbb X$. 	Moreover, for every non-empty $\Lambda \in \mathcal X_\mathsf b$, and all measurable $h:\Lambda\to [0,\infty]$ (or $h$ complex-valued with non-negative real part) with $\int_\Lambda |\e^{-h} - 1| \e^a \dd \lambda_z <\infty$, we have 
	\be \label{eq:log-laplace-fivo}
		\log \mathsf E_\Lambda\Bigl[ \e^{-\int_\Lambda h\dd \eta}\Bigr]	
			= \sum_{n=1}^\infty \frac{1}{n!} \int_{\Lambda^n}\bigl( \e^{-\sum_{j=1}^n h(x_j)} - 1\bigr) \varphi_n^\mathsf T(x_1,\ldots,x_n)  \dd \lambda_z^n(\vect x).
	\ee
\end{theorem}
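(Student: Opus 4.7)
I plan to prove the two claims in turn: first the universal Ursell bound \eqref{eq:fivo2}, then the log-Laplace identity \eqref{eq:log-laplace-fivo} in finite volume.

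\textbf{Step 1: the bound \eqref{eq:fivo2}.} This is the $t=0$ analogue of \eqref{eq:keyconv}, and the plan is to reuse the tree-graph argument underlying \eqref{eq:keyconv}: contractivity is used in Section~\ref{sec:ks} only to promote the bound to uniqueness of an infinite-volume Gibbs measure, not for the bound itself. Setting
\[
	T(x_0) := \sum_{n=2}^{\infty} \frac{1}{(n-1)!} \int_{\mathbb X^{n-1}} |\varphi_n^\mathsf T(x_0, y_1, \ldots, y_{n-1})| \, \dd \lambda_z^{n-1}(\vect y),
\]
the target is $T(x_0) \leq \e^{a(x_0)} - 1$. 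Since $f \leq 0$, Penrose's partition scheme in the Fern\'andez--Procacci form bounds $|\varphi_n^\mathsf T|$ pointwise by a \emph{non-negative} sum over labelled trees $\tau$ on $\{0,1,\ldots,n-1\}$ rooted at $0$, with weight $\prod_{\{i,j\}\in E(\tau)}|f(x_i,x_j)|\cdot\prod_{\{i,j\}\in S(\tau)}(1+f(x_i,x_j))$, where $S(\tau)$ is the set of sibling pairs of $\tau$. Summing level by level produces the functional inequality
\[
	T(x_0) \leq \sum_{k=1}^{\infty} \frac{1}{k!} \int_{\mathbb X^k} \prod_{j=1}^{k} |f(x_0,y_j)| \prod_{1 \leq i < j \leq k} (1+f(y_i,y_j)) \prod_{j=1}^{k} z(y_j)\bigl(1+T(y_j)\bigr) \dd \lambda^k(\vect y).
\]
With the inductive ansatz $1+T(y)\leq \e^{a(y)}$ and assumption~\eqref{eq:suff1} at $t=0$, this closes by induction on tree depth (monotone convergence handles infinite depth).

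\textbf{Step 2: the log-Laplace identity \eqref{eq:log-laplace-fivo}.} The measure $\mathsf P_\Lambda$ has density $\e^{-H(\eta)}/\Xi_\Lambda[z]$ with respect to the Poisson process on $\Lambda$ of intensity $z\,\dd \lambda$, where
\[
	\Xi_\Lambda[\zeta] := \sum_{n=0}^{\infty} \frac{1}{n!} \int_{\Lambda^n} \e^{-H_n(x_1,\ldots,x_n)} \prod_{j=1}^{n} \zeta(x_j)\, \dd \lambda(x_j),
\]
so that $\mathsf E_\Lambda[\e^{-\int h \dd \eta}] = \Xi_\Lambda[z\e^{-h}]/\Xi_\Lambda[z]$. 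Expanding $\e^{-H_n}=\prod_{i<j}(1+f(x_i,x_j))=\sum_{G\in\mathcal G_n}w(G;x_1,\ldots,x_n)$ and grouping graphs by connected components, the classical exponential-formula identity for labelled graphs (see e.g.~\cite[Chapter 4.4]{ruelle1969book}) gives
\[
	\log \Xi_\Lambda[\zeta] = \sum_{n=1}^{\infty} \frac{1}{n!} \int_{\Lambda^n} \varphi_n^\mathsf T(x_1,\ldots,x_n)\prod_{j=1}^{n}\zeta(x_j)\,\dd\lambda(x_j),
\]
valid as soon as the RHS converges absolutely. Applying this to $\zeta=z\e^{-h}$ and $\zeta=z$ and taking the difference yields \eqref{eq:log-laplace-fivo}.

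\textbf{Convergence and main obstacle.} For the \emph{subtracted} series the estimate \eqref{eq:trick} specialised to $t=0$ gives $|\e^{-\sum h(x_j)}-1|\leq\sum_{j=1}^n|\e^{-h(x_j)}-1|$ (using $\Re h\geq 0$); combined with the symmetry of $\varphi_n^\mathsf T$, Fubini, and \eqref{eq:fivo2} this yields
\[
	\sum_{n=1}^\infty \frac{1}{n!}\int_{\Lambda^n}|\varphi_n^\mathsf T|\prod_{j=1}^n z(x_j)\,\bigl|\e^{-\sum_j h(x_j)}-1\bigr|\,\dd\lambda(x_j) \;\leq\; 2\int_\Lambda |\e^{-h}-1|\,\e^a\,\dd\lambda_z\;<\infty,
\]
which justifies term-by-term subtraction of the two partition-function expansions. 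The main obstacle is really Step 1: at $t=0$ the fixed-point operator of Section~\ref{sec:ks} is no longer contractive, so the Banach fixed-point argument must be replaced by the direct monotone induction on the Penrose/Fern\'andez--Procacci tree expansion sketched above. This step is robust because the bound \eqref{eq:fivo2} itself---unlike uniqueness of the Gibbs measure---does not require a strict gap $t>0$.
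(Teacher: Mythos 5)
Your overall strategy is sound, but both halves diverge from the paper's actual proof, and Step~2 has a genuine gap. On Step~1: the paper does \emph{not} obtain \eqref{eq:fivo2} from a Penrose/Fern\'andez--Procacci tree-graph inequality. It uses the identity $\varphi_n^\mathsf T=\psi_{1,n}$ and Proposition~\ref{prop:picard} with $k=1$, i.e.\ the inductive (Bissacot--Fern\'andez--Procacci style) treatment of the Kirkwood--Salsburg partial sums $\vect{\tilde S}_N$, which indeed never uses contractivity and works at $t=0$. Your route via the tree-graph inequality $|\varphi_n^\mathsf T|\leq\sum_{T\in\mathcal T_{n-1}^\circ}\tilde w(T,0;\cdot)$ plus the fixed-point equation \eqref{eq:fptrees} is exactly the alternative the paper sketches after Proposition~\ref{prop:fptrees}, so it is legitimate, but two caveats: you must import the tree-graph inequality as an external combinatorial input (cited to Fern\'andez--Procacci), and the induction must be run on \emph{depth-truncated tree generating functions}, not on the Ursell sum $T(x_0)$ itself --- the self-referential inequality you display for $T$ is not what the induction actually iterates. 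As written, you assume the bound $1+T(y)\le \e^{a(y)}$ for the very quantity you are trying to control; the standard fix (truncate at depth $N$, prove the uniform bound by induction on $N$, let $N\to\infty$ by monotone convergence, then apply the tree-graph inequality) is routine but should be said.

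The genuine gap is in Step~2. Writing $\log\mathsf E_\Lambda[\e^{-\int h\dd\eta}]=\log\Xi_\Lambda(z\e^{-h})-\log\Xi_\Lambda(z)$ and subtracting the two cluster expansions term by term requires each series $\sum_n\frac{1}{n!}\int_{\Lambda^n}\varphi_n^\mathsf T\,\dd\lambda_z^n$ (with $\zeta=z$ and $\zeta=z\e^{-h}$) to converge absolutely \emph{separately}; Theorem~\ref{thm:finite-volume} guarantees this only under the extra hypothesis $\int_\Lambda\e^a\dd\lambda_z<\infty$, which is \emph{not} assumed in Theorem~\ref{thm:log-laplace-fivo} (only $\lambda_z(\Lambda)<\infty$ is; the weight function $a$ may be unbounded on $\Lambda$). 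Your estimate $\leq 2\int_\Lambda|\e^{-h}-1|\e^a\dd\lambda_z$ controls the \emph{subtracted} series but does not justify identifying it with the difference of two possibly divergent logarithms. This is precisely why the paper's proof of Theorem~\ref{thm:log-laplace-fivo} instead goes through Theorem~\ref{thm:finite-volume-corr} and repeats the Section~\ref{sec:truncated} machinery (tilted one-point functions, Janossy densities on $\Lambda\cap\{a\leq M\}$, then a dominated-convergence limit $M\to\infty$); the partition-function shortcut is offered in the paper only under the additional hypothesis $\int_\Lambda\e^a\dd\lambda_z<\infty$. To repair your argument you would need to insert the truncation $h_M=h\1_{\{a\leq M\}}$, prove the identity for each $h_M$ by a method that does not require absolute convergence of the untruncated series, and pass to the limit --- at which point you have essentially reconstructed the paper's route. (A minor further point: for complex-valued $h$ you should also note that $\mathsf E_\Lambda[\e^{-\int h\dd\eta}]\neq 0$ before taking the logarithm; the convergent expansion itself supplies this.)
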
 

\begin{theorem} \label{thm:finite-volume-corr}
	Under the conditions of Theorem~\ref{thm:log-laplace-fivo}, the bound~\eqref{eq:correlations-bound} holds true for $t=0$ and integration over $\mathbb X^n$, and the $n$-point functions $\rho_{n,\Lambda}$ of $\mathsf P_\Lambda$ are given by~\eqref{eq:correlations-exp} with integrals over $\Lambda^n$ instead of $\mathbb X^n$. 
\end{theorem}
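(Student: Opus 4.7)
The plan is to prove the two claims of the theorem separately. First I would establish the uniform bound~\eqref{eq:correlations-bound} at $t=0$ (with integration over powers of $\mathbb X$), and then derive the expansion~\eqref{eq:correlations-exp} for $\rho_{n,\Lambda}$ by a classical Mayer reorganization in finite volume.

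For the bound, the key observation is that~\eqref{eq:correlations-bound} is a purely combinatorial-analytic statement about the graph weights $\psi_{n,n+k}$ and the reference measure $\lambda_z$; it does not involve $\Lambda$ at all. I would revisit the proof of Theorem~\ref{thm:correlations} given in Section~\ref{sec:correlations} and check that the factor $\e^{tk}$ on the left of~\eqref{eq:correlations-bound} can be set to $1$ without breaking the iteration. Because $v\geq 0$ implies $0\leq 1+f\leq 1$, a Penrose/tree-graph inequality bounds $|\psi_{n,n+k}(\vect x,\vect y)|$ by $\prod_{1\leq i<j\leq n}(1+f(x_i,x_j))$ times a sum over multi-rooted spanning trees on $\{1,\ldots,n+k\}$ rooted at $\{1,\ldots,n\}$, carrying $|f|$-weights on tree edges and $(1+f)$-compatibility factors on the Penrose non-tree edges. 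Integrating in $\vect y$ against $\lambda_z^k$ and iterating condition~\eqref{eq:suff1} with $t=0$ yields the required bound by $\e^{\sum_i a(x_i)}$ through the standard generating-function argument for rooted-tree functionals; the geometric weight $\e^{tk}$ was only inserted in the $t>0$ version as an extra contraction factor at each iteration step and can simply be dropped.

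For the expansion, I would use the finite-volume setting. Since $0\leq \prod_{i<j}(1+f(\xi_i,\xi_j)) \leq 1$ and $\lambda_z(\Lambda)<\infty$, the partition function
\bes
\Xi_\Lambda := \sum_{k=0}^\infty \frac{1}{k!}\int_{\Lambda^k}\prod_{1\leq i<j\leq k}(1+f(y_i,y_j))\dd \lambda_z^k(\vect y)
\ees
is well defined and lies in $[1,\e^{\lambda_z(\Lambda)}]$, and $\mathsf P_\Lambda$ is absolutely continuous with respect to the Poisson point process of intensity $z\1_\Lambda\dd\lambda$ with density $\Xi_\Lambda^{-1}\e^{-H(\eta)}$. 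Applying Lemma~\ref{lem:correp} inside $\Lambda$ then gives the Mayer formula
\bes
\rho_{n,\Lambda}(\vect x) = \prod_{j=1}^n z(x_j)\cdot \frac{1}{\Xi_\Lambda}\sum_{k=0}^\infty \frac{1}{k!}\int_{\Lambda^k}\prod_{1\leq i<j\leq n+k}(1+f(\xi_i,\xi_j))\dd \lambda_z^k(\vect y),
\ees
with $(\xi_1,\ldots,\xi_{n+k})=(x_1,\ldots,x_n,y_1,\ldots,y_k)$. Expanding each $\prod(1+f)=\sum_{G\in\mathcal G_{n+k}}w(G;\vect\xi)$ and decomposing $G$ according to the subset $S\subseteq\{y_1,\ldots,y_k\}$ of vertices unreachable in $G$ from the roots, the weight of $G$ factorizes into a weight of an unconstrained graph on $S$ times a weight in $\mathcal D_{n,n+k-|S|}$. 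Reindexing $m=|S|$, $l=k-m$ and summing over $k$ turns the double series into the Cauchy product $\Xi_\Lambda\cdot\sum_{l\geq 0}\tfrac{1}{l!}\int_{\Lambda^l}\psi_{n,n+l}\dd\lambda_z^l$; the rearrangement is justified by Fubini-Tonelli, since the bound from the first step (restricted to $\Lambda\subseteq\mathbb X$) yields absolute convergence. Cancelling $\Xi_\Lambda$ delivers~\eqref{eq:correlations-exp} with integration over $\Lambda$.

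The main obstacle is the first step. Without the contractivity $\|\vect K_z\|\leq \e^{-t}<1$ from Section~\ref{sec:ks}, a Banach fixed-point argument is unavailable, and one must bound $\psi_{n,n+k}$ directly via a Penrose-type tree-graph inequality and use~\eqref{eq:suff1} at $t=0$ as a sharp $L^1$ inequality for the generating function of rooted trees. Once this uniform bound is in hand, the combinatorial identity underlying the second step is routine.
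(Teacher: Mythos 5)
Your overall strategy is sound and the second half (the expansion) is essentially the paper's argument in a slightly different packaging, but the first half takes a genuinely different and less complete route than the paper. For the expansion, the paper works at the level of the generating functional, writing $\mathsf E_\Lambda[\prod_x(1+u(x))^{\eta(\{x\})}]=\Xi_\Lambda(z(1+u))/\Xi_\Lambda(z)$ and decomposing each graph $G\in\mathcal G_n$ according to the set $V$ of vertices joined by a path to the ``marked'' set $J$; this factorizes $\Xi_\Lambda(z(1+u))$ into $\Xi_\Lambda(z)$ times the $\psi_{k,k+m}$-series, and the partition function cancels. Your pointwise version --- starting from Lemma~\ref{lem:correp} applied to $\mathsf P_\Lambda$, which gives the Mayer formula $\rho_{n,\Lambda}=\prod_j z(x_j)\,\Xi_\Lambda^{-1}\sum_k\frac1{k!}\int_{\Lambda^k}\prod_{i<j}(1+f(\xi_i,\xi_j))\,\dd\lambda_z^k$, and then splitting graphs by the vertices unreachable from the roots --- is the same combinatorial decomposition and is equally valid; the Fubini justification you give is adequate. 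Where you diverge substantially is the bound~\eqref{eq:correlations-bound} at $t=0$. The paper obtains it with no extra work: Proposition~\ref{prop:picard} is stated and proven precisely under the hypothesis that~\eqref{eq:suff1} holds with $t=0$, via an induction on the partial sums $\vect{\tilde S}_N$ using the Kirkwood--Salsburg recursion of Lemmas~\ref{lem:recursion} and~\ref{lem:recursion2}; no Banach fixed-point or contractivity is involved there, so your concern that losing $\|\vect K_z\|\leq \e^{-t}<1$ forces a new argument is misplaced. Your proposed substitute --- a Penrose-type inequality bounding $|\psi_{n,n+k}|$ by $\prod_{1\leq i<j\leq n}(1+f(x_i,x_j))$ times a sum over forests rooted at $\{1,\ldots,n\}$ with Fern{\'a}ndez--Procacci weights --- is plausible (the single-root case is quoted in Section~\ref{sec:branching} and the forest machinery appears in Section~\ref{sec:trees}), but you assert it rather than prove it, and constructing a partition scheme on the multi-rooted classes $\mathcal D(I,J)$ that yields exactly that prefactor is the entire difficulty of this step. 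As written, this is the one soft spot: either supply the multi-rooted tree-graph inequality in detail, or simply invoke Proposition~\ref{prop:picard}, which already delivers the bound you need.
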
 

\noindent Theorem~\ref{thm:log-laplace-fivo} and~\ref{thm:finite-volume-corr} are  proven in Section~\ref{sec:finite-volume}. Notice that the convergence estimate~\eqref{eq:fivo2} really has integrals over $\mathbb X^{n}$ (not $\Lambda^{n}$), and the expansions for the log-Laplace transform and correlation functions depend on  $\Lambda$ only through the domains of integration. As a consequence, we can pass to the infinite-volume limit: Let $(\Lambda_n)_{n\in\N} $ be a sequence of non-empty sets in  $\mathcal X_\mathsf b$ with $\Lambda_n \nearrow \Lambda$. Then for 
all measurable $h:\Lambda\to [0,\infty]$ (or $h$ complex-valued with non-negative real part) with $\int_\mathbb X |\e^{-h} - 1| \e^a \dd \lambda_z <\infty$, we have 
	\be \label{eq:lapla-co}
		\lim_{n\to \infty} \log \mathsf E_{\Lambda_n}\Bigl[ \e^{-\int_{\Lambda_n} h\dd \eta}\Bigr]	
			= \sum_{n=1}^\infty \frac{1}{n!} \int_{\mathbb X^n}\bigl( \e^{-\sum_{j=1}^n h(x_j)} - 1\bigr) \varphi_n^\mathsf T(x_1,\ldots,x_n)  \dd \lambda_z^n(\vect x).
	\ee
A similar convergence for correlation functions $\rho_{n,\Lambda_n}$ holds true as well, the precise statement is left to the reader. In view of~\eqref{eq:lapla-co}, we should expect that the sequence $(\mathsf P_{\Lambda_n})_{n\in \N}$ converges in some suitable sense to a uniquely defined probability measure $\mathsf P$ on $(\mathcal N,\mathfrak N)$. The measure $\mathsf P$ should be in $\mathscr G(z)$ and its log-Laplace functional (restricted to the functions $h$ under consideration) should be given by the right-hand side of~\eqref{eq:lapla-co}. A rigorous statement and proof are beyond the scope of this article, but we emphasize that the uniqueness of the infinite-volume limit of Gibbs measures with empty boundary conditions replaces the uniqueness in Theorem~\ref{thm:uniqueness}. 

Mathematical physicists often work with the  \emph{grand-canonical partition function} 
\bes % \label{eq:Xi}
	\Xi_\Lambda(z) = 1+ \sum_{n=1}^\infty \frac{1}{n!} \int_{\Lambda^n}  \e^{- H_n(x_1,\ldots,x_n)} \dd \lambda_z^n(\vect x).
\ees
instead of log-Laplace transforms. 

\begin{theorem} \label{thm:finite-volume} 
		Under the conditions of Theorem~\ref{thm:log-laplace-fivo}, if $\Lambda\in \mathcal X_\mathsf b$ with $\int_\Lambda \e^a \dd \lambda_z <\infty$, then 
	\bes
		\log \Xi_\Lambda(z) = \sum_{n=1}^\infty \frac{1}{n!} \int_{\mathbb X^n}\varphi_n^\mathsf T(x_1,\ldots,x_n)  \dd \lambda_z^n(\vect x)
	 \ees
	 with absolutely convergent integrals and series.	
\end{theorem}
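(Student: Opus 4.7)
The plan is to deduce the identity from Theorem~\ref{thm:log-laplace-fivo} by specializing the log-Laplace functional formula to a constant function $h\equiv s\geq 0$, and then sending $s\to\infty$. The motivation is the simple observation that, since $\mathsf P_\Lambda$ has Radon-Nikod{\'y}m derivative proportional to $\e^{-H(\eta)}$ with respect to the Poisson point process of intensity $\lambda_z\rvert_\Lambda$, and since $H$ vanishes on the empty configuration, one computes directly that
\[
   \mathsf P_\Lambda(N_\Lambda = 0) \;=\; \frac{\e^{-\lambda_z(\Lambda)}}{\mathsf E_{\mathsf{Po}}[\e^{-H(\eta)}]} \;=\; \frac{1}{\Xi_\Lambda(z)},
\]
so that $-\log \Xi_\Lambda(z) = \log \mathsf P_\Lambda(N_\Lambda=0)$ is the quantity to extract.

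Before taking any limit, I would first establish the absolute convergence claim. The $n=1$ term is $\int_\Lambda \dd\lambda_z = \lambda_z(\Lambda) < \infty$, which is finite because $a\geq 0$ together with the hypothesis $\int_\Lambda \e^a\,\dd\lambda_z <\infty$. For $n\geq 2$, invoke the bound~\eqref{eq:fivo2} (i.e.~\eqref{eq:keyconv} specialized to $t=0$), integrate it in $x_1\in\Lambda$ against $\dd\lambda_z$, and estimate $\int_{\Lambda^{n-1}} \leq \int_{\mathbb X^{n-1}}$ using non-negativity of $|\varphi_n^{\mathsf T}|$, which yields
\[
   \sum_{n=2}^\infty \frac{1}{n!}\int_{\Lambda^n} |\varphi_n^{\mathsf T}|\,\dd\lambda_z^n \;\leq\; \int_\Lambda (\e^a - 1)\,\dd\lambda_z \;<\;\infty.
\]

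Now apply Theorem~\ref{thm:log-laplace-fivo} with $h\equiv s$ for arbitrary $s\geq 0$; the integrability hypothesis of that theorem reduces to $|\e^{-s}-1|\int_\Lambda \e^a\,\dd\lambda_z <\infty$, which holds. This yields
\[
   \log \mathsf E_\Lambda\!\bigl[\e^{-sN_\Lambda}\bigr] \;=\; \sum_{n=1}^\infty \frac{1}{n!}\int_{\Lambda^n}\bigl(\e^{-sn}-1\bigr)\varphi_n^{\mathsf T}(\vect x)\,\dd\lambda_z^n(\vect x).
\]
On the left, $\e^{-sN_\Lambda}\to \1_{\{N_\Lambda=0\}}$ pointwise as $s\to\infty$, bounded by $1$, so dominated convergence combined with continuity of $\log$ on $(0,1]$ gives $\log\mathsf E_\Lambda[\e^{-sN_\Lambda}]\to \log\mathsf P_\Lambda(N_\Lambda=0) = -\log\Xi_\Lambda(z)$. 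On the right, for each fixed $n\geq 1$ the factor $(\e^{-sn}-1)$ converges to $-1$ and is bounded in modulus by $1$; dominated convergence at the level of the series (using the majorant $|\varphi_n^{\mathsf T}|$ shown summable in the previous paragraph) then yields
\[
   -\log \Xi_\Lambda(z) \;=\; -\sum_{n=1}^\infty \frac{1}{n!}\int_{\Lambda^n}\varphi_n^{\mathsf T}(\vect x)\,\dd\lambda_z^n(\vect x),
\]
which, up to the harmless convention that $\lambda_z$ may be read as $\lambda_z\rvert_\Lambda$ (making $\int_{\Lambda^n}$ and $\int_{\mathbb X^n}$ coincide), is the claimed identity.

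The only genuine obstacle is the uniform-in-$s$ dominated-convergence step for the series on the right-hand side; this is settled the moment one observes that~\eqref{eq:fivo2} after integration in a single ``root'' variable over $\Lambda$ provides exactly the summable $L^1$-dominant required. All other ingredients—the representation $\mathsf P_\Lambda(N_\Lambda=0)=1/\Xi_\Lambda(z)$ and the dominated convergence for $\mathsf E_\Lambda[\e^{-sN_\Lambda}]$—are elementary manipulations of the finite-volume Gibbs measure.
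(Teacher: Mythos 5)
Your analytic steps are individually sound: the identity $\mathsf P_\Lambda(N_\Lambda=0)=1/\Xi_\Lambda(z)$, the summable majorant obtained by integrating \eqref{eq:fivo2} over one root variable in $\Lambda$, and the two dominated-convergence passages as $s\to\infty$ all check out. The difficulty is structural. You take Theorem~\ref{thm:log-laplace-fivo} as an input, but in the paper the three finite-volume results are proven in the order \ref{thm:finite-volume} $\rightarrow$ \ref{thm:finite-volume-corr} $\rightarrow$ \ref{thm:log-laplace-fivo}: the proof of Theorem~\ref{thm:log-laplace-fivo} builds on Theorem~\ref{thm:finite-volume-corr}, whose proof in turn starts from the partition function of Theorem~\ref{thm:finite-volume} via $\mathsf E_\Lambda[\prod_x(1+u(x))^{\eta(\{x\})}]=\Xi_\Lambda(z(1+u))/\Xi_\Lambda(z)$ and the graph decomposition \eqref{eq:fiva}. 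Deducing Theorem~\ref{thm:finite-volume} from Theorem~\ref{thm:log-laplace-fivo} is therefore circular unless you supply an independent proof of the latter, and at $t=0$ the paper offers no such route (the infinite-volume argument of Section~\ref{sec:truncated} requires $t>0$ for the Kirkwood--Salsburg contraction). So as it stands your argument proves an equivalence, not the theorem.

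What the paper actually does is direct and logically prior to everything else in the section: expand $\e^{-H_n}=\sum_{G\in\mathcal G_n}w(G;x_1,\ldots,x_n)$ as in \eqref{eq:keygraphs}, group the graphs by their connected components so that the integrand factorizes over the blocks of a set partition into Ursell functions, and apply the exponential formula \eqref{eq:exponentialformula}; the hypothesis $\int_\Lambda \e^a\dd\lambda_z<\infty$ together with \eqref{eq:keyconv} at $t=0$ gives exactly the condition $\sum_n |A_n|/n!<\infty$ needed there. Your convergence estimate for $\sum_n\frac{1}{n!}\int_{\Lambda^n}|\varphi_n^{\mathsf T}|\dd\lambda_z^n$ coincides with the paper's and is the part of your write-up worth keeping; the limiting argument in $s$ should be replaced by this combinatorial factorization. (You are right, incidentally, that the integrals in the conclusion are over $\Lambda^n$; the $\mathbb X^n$ in the displayed statement is a typo.)
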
 

\noindent The theorem is proven in Section~\ref{sec:finite-volume}. For $\int_\Lambda \e^a \dd \lambda_z<\infty$ and measurable, non-negative $h$, we have 
 \bes
 	\mathsf E_\Lambda\Bigl[\e^{-\int_\Lambda h \dd \eta}\Bigr] = \log \Xi_\Lambda( z\,\e^{-h}) - \log \Xi_\Lambda(z),
 \ees
and the formula~\eqref{eq:log-laplace-fivo} is easily recovered. 

\subsection{Attractive pairwise interactions} \label{sec:attractive} 
The results proven in this article cover non-negative pairwise interactions only, for the reader's convenience we summarize some known results on attractive pairwise interactions. Let $v:\mathbb X\times \mathbb X\to \R\cup\{\infty\}$ with $v(x,y) = v(y,x)$ on $\mathbb X\times \mathbb X$. The pair potential $v$ is called \emph{stable}~\cite[Chapter 3.2]{ruelle1969book} if for some constant $B\geq 0$ and all $n\in \N$ and $x_1,\ldots, x_n\in \mathbb X$, we have
$$
	\sum_{1\leq i < j \leq n} v(x_i,x_j) \geq - B n. 
$$
Sometimes it is more convenient to allow for $x$-dependent $B$, and replace the previous condition by 
\be \label{eq:inhom-stable}
	\sum_{1\leq i < j \leq n} v(x_i,x_j) \geq - \sum_{i=1}^n B(x_i)
\ee
for some measurable function $B:\mathbb X\to \R_+$ and all $n\in \N$ and all $x_1,\ldots, x_n\in \mathbb X$, see~\cite{poghosyan-ueltschi2009}. A sufficient condition for stability in the sense of~\eqref{eq:inhom-stable}  is \emph{local stability}, a notion somewhat more popular in stochastic geometry. Suppose that 
\be \label{eq:locally-stable} 
	\int_\mathbb X v(x,y) \dd \eta(y) \geq - C(x)
\ee
for some function $C:\mathbb X\to \R_+$ and all finite configurations $\eta = \sum_{i=1}^k \delta_{y_i}$ such that $\eta + \delta_x$ has finite total energy. Then for all $n\in \N$ and $x_1,\ldots, x_n\in \mathbb X$, we have 
$$
	\sum_{1\leq i < j \leq n} v(x_i,x_j)  = \frac12 \sum_{i=1}^n  \sum_{\substack{1\leq j \leq n:\\ j \neq i}} v(x_i,x_j) \geq - \frac12 \sum_{i=1}^n C(x_i). 
$$
(Notice that the inequality is trivial when the left-hand side is infinite.) Thus local stability~\eqref{eq:locally-stable} implies stability in the sense of~\eqref{eq:inhom-stable}. 

The following result is due to Procacci and Yuhjtman~\cite{procacci-yuhjtman2017}. Assume that $v$ is stable with stability function $B(x)$, and that 
\be \label{eq:proyu} 
	\int_\mathbb X \bigl( 1- \e^{- |v|} \bigr) \e^{B(y) + a(y)}  \dd \lambda_z(y) \leq a(x)
\ee
for some measurable function $a:\mathbb X\to \R_+$ and $\lambda$-almost all $x\in \mathbb X$. Then
$$
	\sum_{n=1}^\infty \frac{1}{n!}\int_{\mathbb X^n} \bigl| \varphi_n^\mathsf T(q,x_1,\ldots, x_n)\bigr|\, \dd \lambda_z^n(\vect x) \leq \e^{B(q)} \bigl(\e^{a(q)} - 1\bigr)
$$
for $\lambda$-almost all $q\in \mathbb X^n$, and a suitable analogue of Theorem~\ref{thm:finite-volume} (hence, Theorems~\ref{thm:log-laplace-fivo} and~\ref{thm:finite-volume-corr}) holds true. The proof in~\cite{procacci-yuhjtman2017} is written for $\mathbb X= \R^d$, $\lambda$ the Lebesgue measure, and translationally invariant pair potentials, building on a  novel tree-graph inequality. The result is easily extended to the general setup by using the formulation of the tree-graph inequality from~\cite{ueltschi2017}. For further results on attractive pairwise interactions, see~\cite{poghosyan-ueltschi2009} and the references therein. 

The condition~\eqref{eq:proyu} extends~\eqref{eq:suff2} for $t=0$; we leave as an open problem the  extension of~\eqref{eq:suff1} with $t=0$ to stable pair potentials. We also leave as an open question the extension of~\eqref{eq:proyu} to $t>0$, i.e., whether multiplying the left-hand side of~\eqref{eq:proyu} with $\e^t$ for some $x$-independent $t>0$ yields a sufficient condition for the uniqueness of the infinite-volume Gibbs measure as in Theorem~\ref{thm:uniqueness}, and mention instead that related uniqueness criteria based on Kirkwood-Salsburg equations are available~\cite[Chapter~4.2]{ruelle1969book}.

\subsection{Trees and branching processes} \label{sec:branching}

One of the standard techniques to prove convergence of cluster expansions is by \emph{tree-graph estimates}:  sums over graphs are estimated by sums over trees with the help of \emph{tree partition schemes}, see~\cite{fernandez-procacci2007,poghosyan-ueltschi2009} and the references therein. Even though we follow another standard route, the method of integral equations, it is helpful to interpret the convergence conditions in the context of trees: Propositions~\ref{prop:kptrees} and~\ref{prop:fptrees} below say that the convergence conditions are in fact \emph{equivalent} to the convergence of certain generating functions for trees---convergence conditions mirror fixed point equations for trees. This observation is not entirely new~\cite{fernandez-procacci2007, faris2010combinatorics} but usually the focus is on sufficient convergence conditions and the equivalence is rarely explicitly stated (see, however,~\cite{jansen2015tonks}). 

Trees are interesting because they are related to branching processes: the main result of this section says that the convergence condition for tree generating functions (and cluster expansions) as formulated by~\cite{ueltschi2004cluster} implies extinction of a related multi-type, discrete-time branching process with type space $\mathbb X$. The branching process is of interest because extinction of the branching process implies absence of percolation in a random connection model, see Section~\ref{sec:boolean}.

\subsubsection{Convergence of generating functions for trees}
Let $\mathcal T_n$ be the set of trees with vertex set  $[n]=\{1,\ldots,n\}$, 
 $\mathcal T_n^\bullet = \mathcal T_n \times [n]$ the rooted trees with vertex set $[n]$, and $\mathcal T_n^\circ$ the set of trees with vertex set $\{0,1,\ldots,n\}$. The notation with black and white circles is borrowed from the theory of combinatorial species: black circles refer to roots or the operation of ``pointing''~\cite{bergeron-labelle-leroux1998book}, white circles to ``ghosts'' that do not come with powers of $z$ in the generating functions.  Define a family $(T_q^\circ)_{q\in \mathbb X}$ of generating functionals 
 \be \label{eq:tree-ghost}
 	T_q^\circ (z) := 1+ \sum_{n=1}^\infty \frac{1}{n!}\int_{\mathbb X^n}\sum_{T\in\mathcal T_n^\circ} |w(T;x_0,\ldots,x_n)| \dd \lambda_z(x_1)\cdots \dd \lambda_z(x_n),\quad x_0:=q
 \ee
 and  
 \be \label{eq:tree-rooted}
 	T_q^\bullet (z) := z(q) + \sum_{n=1}^\infty \frac{1}{n!}\int_{\mathbb X^n}\sum_{(T,r) \in\mathcal T_n^\bullet} |w(T;x_1,\ldots,x_n)| \prod_{i=1}^n z(x_i) \dd \lambda(x_1)\cdots \dd\delta_q(x_r)\cdots \dd\lambda(x_n).
 \ee
 Notice 
 \be \label{eq:kptrees}
 	T_q^\bullet (z) = z(q) T_q^\circ (z) = z(q)\exp\Bigl( \int_\mathbb X |f(q,y)| T_y^\bullet(z)\dd \lambda(y) \Bigr)
 \ee
 see Faris~\cite[Section 3.1]{faris2010combinatorics}. In Eq.~\eqref{eq:kptrees}, both sides are either infinite or finite. If they are finite, the fixed point problem may have more than one solution, and $T_q^\bullet (z)$ is the smallest solution \cite{faris2010combinatorics}. 
 
 \begin{prop} \label{prop:kptrees}
 	Let $z:\mathbb X\to \R_+$ be a measurable map. 
 	The following two conditions are equivalent: 
 	\begin{enumerate} 
 		\item [(i)] $T_q^\circ(z) < \infty$ for all $q \in \mathbb X$.
 		\item [(ii)] There exists a measurable function $a:\mathbb X\to \R_+$ such that $\int_{\mathbb X} |f(q,y)| \e^{a(y)} \dd\lambda_z(y) \leq a(q)$ for all $q\in \mathbb X$. 
 	\end{enumerate} 
 \end{prop}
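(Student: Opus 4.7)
My plan is to read the proposition as a statement about the nonlinear operator
$\Phi: a \mapsto \bigl(q \mapsto \int_\mathbb X |f(q,y)|\, \e^{a(y)}\, \dd\lambda_z(y)\bigr)$
on non-negative measurable functions. Using $T_y^\bullet(z) = z(y)\, T_y^\circ(z)$ and $z(y)\dd\lambda(y) = \dd\lambda_z(y)$, the identity~\eqref{eq:kptrees} can be rewritten as
\[
    T_q^\circ(z) = \exp\Bigl(\int_\mathbb X |f(q,y)|\, T_y^\circ(z)\, \dd\lambda_z(y)\Bigr),
\]
so $a_\ast(q) := \log T_q^\circ(z)$ is (formally) a fixed point of $\Phi$, while condition~(ii) says precisely that $\Phi$ admits a measurable supersolution in $[0,\infty)^\mathbb X$. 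The whole proof reduces to a monotone-iteration argument around $\Phi$.

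For (i) $\Rightarrow$ (ii) I simply take $a(q) := \log T_q^\circ(z)$. Under (i) this is a $[0,\infty)$-valued measurable function (joint measurability of the integrands in~\eqref{eq:tree-ghost} passes through Fubini), and the displayed identity turns~(ii) into an equality.

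For (ii) $\Rightarrow$ (i) I truncate the series~\eqref{eq:tree-ghost} by generation. Set $S_0(q) := 1$ and
\[
    S_{N+1}(q) := \exp\Bigl(\int_\mathbb X |f(q,y)|\, S_N(y)\, \dd\lambda_z(y)\Bigr).
\]
Expanding the exponential as a sum over the number $k$ of children of the root---the same rooted-tree decomposition that underlies~\eqref{eq:kptrees}---shows inductively that $S_N(q)$ equals the partial sum of~\eqref{eq:tree-ghost} restricted to trees of depth at most $N$ rooted at $q$. Since every finite tree has finite depth and all weights in~\eqref{eq:tree-ghost} are non-negative, monotone convergence gives $S_N(q) \uparrow T_q^\circ(z)$. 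A one-line induction using~(ii) yields $S_N \leq \e^a$ pointwise (the base $N = 0$ is trivial; in the step one bounds $S_{N+1}(q)$ by plugging $S_N \leq \e^a$ into the exponential and applying~(ii)), and passing to the limit gives $T_q^\circ(z) \leq \e^{a(q)} < \infty$.

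I do not anticipate any genuine obstacle: the only combinatorial input, namely the identification of $S_N$ with the depth-$N$ truncation of~\eqref{eq:tree-ghost}, is the same decomposition that proves~\eqref{eq:kptrees} itself (see, e.g., \cite{faris2010combinatorics}), and everything else is monotone convergence and a trivial induction.
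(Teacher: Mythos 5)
Your proposal is correct and follows essentially the same route as the paper: the implication (i)\,$\Rightarrow$\,(ii) is obtained exactly as in the paper by reading off $a(q)=\log T_q^\circ(z)$ from the fixed point equation~\eqref{eq:kptrees}, and the implication (ii)\,$\Rightarrow$\,(i) is the standard monotone induction (here organized by tree depth) that the paper delegates to~\cite{poghosyan-ueltschi2009} and~\cite[Theorem 3.1]{faris2010combinatorics}. Your explicit identification of the iterates $S_N$ with the depth-$N$ truncations is the right way to justify that the monotone limit is $T_q^\circ(z)$ itself rather than some smaller fixed point, so no gap remains.
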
 
 
 \begin{proof} 
 	The implication (ii) $\Rightarrow$ (i) is proven by induction as in~\cite{poghosyan-ueltschi2009}; see also~\cite[Theorem 3.1]{faris2010combinatorics}. The implication (i) $\Rightarrow$ (ii) follows from the fixed point equation~\eqref{eq:kptrees}, which proves that $a(q):= \log T_q^\circ(z)$ is an admissible choice. 
 \end{proof}

Let $(T,r)\in \mathcal T_n^\bullet$. The root $r$ induces a notion of generations and descendants.
For $k\in T$, let $C_k^{(r)} \subset[n]$ be the collection of children of $k$. 
Set

\bes
	\tilde w(T,r;x_1,\ldots,x_n) = \prod_{k \in T}\Bigl( \prod_{i\in C_k^{(r)}} |f(x_k,x_i)| \prod_{\substack{i,j\in  C_k^{(r)}:\\ i < j}} (1+f(x_i,x_j))\Bigr).
\ees
Define $\tilde T_q^\bullet$ as in~\eqref{eq:tree-rooted} but with weights $\tilde w(T,r;x_1,\ldots,x_n)$ instead of $|w(T;x_1,\ldots,x_n)|$, and $\tilde T_q^\circ$ as in~\eqref{eq:tree-ghost} but with weights $\tilde w(T,0;q,x_1,\ldots,x_n)$.
%\bes
%	\tilde T_q^\circ(z) = 1+ \sum_{n=1}^\infty \frac{1}{n!} \sum_{T\in \mathcal T_n^\circ} \int_{\mathbb X^n} \tilde w(T,0;x_0,x_1,\ldots,x_n) \dd \lambda_z(x_1)\cdots \dd \lambda_z(x_n). 
%\ees
Then 
\be \label{eq:fptrees}
	\tilde T_q^\bullet(z)  =  z (q) \tilde T_q^\circ (z) =   z(q) \Bigl( 1+\sum_{k=1}^\infty \frac{1}{k!}  \int_{\mathbb X^k} \prod_{1\leq i < j \leq k } (1+ f(x_i,x_j)) \prod_{i=1}^k |f(q,x_i)|\tilde T_{x_i}^\bullet (z) \dd \lambda^k(\vect x) \Bigr). 
\ee

 \begin{prop} \label{prop:fptrees}
 	Let $z:\mathbb X\to \R_+$ measurable. 
 	The following two conditions are equivalent: 
 	\begin{enumerate} 
 		\item [(i)] $\tilde T_q^\circ(z) < \infty$ for all $q \in \mathbb X$.
 		\item [(ii)] There exists a measurable function $a:\mathbb X\to \R_+$ such that condition~\eqref{eq:suff1} holds true for all $n\in \N$ and all $x_0\in \mathbb X$ with $t=0$. 
 	\end{enumerate} 
 \end{prop}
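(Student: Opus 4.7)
The plan is to mirror the proof of Proposition~\ref{prop:kptrees}, exploiting the fixed point equation~\eqref{eq:fptrees} and the observation that the right-hand side of~\eqref{eq:suff1} with $t=0$ precisely matches the difference $\tilde T_q^\circ(z) - 1$ when one substitutes $\tilde T_{x_i}^\bullet(z)$ by $z(x_i) \e^{a(x_i)}$. The key identity to keep in mind throughout is
\be
\tilde T_q^\circ(z) - 1 = \sum_{k=1}^\infty \frac{1}{k!} \int_{\mathbb X^k} \prod_{1\leq i <j\leq k} (1+f(x_i,x_j)) \prod_{i=1}^k |f(q,x_i)|\, z(x_i)\, \tilde T_{x_i}^\circ(z)\, \dd \lambda^k(\vect x),
\ee
obtained by combining the two equalities in~\eqref{eq:fptrees}.

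For the implication (ii) $\Rightarrow$ (i), I would proceed by induction on the maximal depth of the trees. Define $\tilde T_q^{\circ,(N)}(z)$ by restricting the sum defining $\tilde T_q^\circ(z)$ to trees of depth at most $N$ (with $q$ at the ghost root of depth $0$). Trivially $\tilde T_q^{\circ,(0)}(z) = 1 \leq \e^{a(q)}$. Expanding a tree of depth $N+1$ according to its root children and using that each subtree rooted at a child has depth at most $N$, one obtains the recursion
\be
\tilde T_q^{\circ,(N+1)}(z) = 1+ \sum_{k=1}^\infty \frac{1}{k!} \int_{\mathbb X^k} \prod_{1\leq i <j\leq k}(1+f(x_i,x_j)) \prod_{i=1}^k |f(q,x_i)|\, z(x_i)\, \tilde T_{x_i}^{\circ,(N)}(z)\, \dd \lambda^k(\vect x).
\ee
Applying the inductive hypothesis $\tilde T_{x_i}^{\circ,(N)}(z) \leq \e^{a(x_i)}$ and then invoking~\eqref{eq:suff1} with $t=0$ and $x_0 = q$ gives $\tilde T_q^{\circ,(N+1)}(z) \leq 1 + (\e^{a(q)}-1) = \e^{a(q)}$. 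Letting $N\to\infty$ by monotone convergence yields $\tilde T_q^\circ(z) \leq \e^{a(q)} < \infty$.

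For (i) $\Rightarrow$ (ii), I would simply set $a(q) := \log \tilde T_q^\circ(z)$, which is non-negative (since $\tilde T_q^\circ(z) \geq 1$) and measurable (as the pointwise limit of partial sums of measurable functions of $q$). Substituting this choice into the displayed identity above and using $\tilde T_{x_i}^\circ(z) = \e^{a(x_i)}$ rewrites the right-hand side as the left-hand side of~\eqref{eq:suff1} with $t=0$, which then equals $\tilde T_q^\circ(z)-1 = \e^{a(q)}-1$, so~\eqref{eq:suff1} holds (with equality, in fact).

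The main technical obstacle I anticipate is justifying the depth-truncation recursion cleanly: I need to argue that the unordered, labelled rooted trees in $\mathcal T_n^\bullet$ with a root $r$ and designated children of $r$ decompose into a product of subtrees in a way that matches the combinatorial factor $1/k!$ and produces exactly the factor $\prod_{1\leq i<j\leq k}(1+f(x_i,x_j))$ at the top level, together with subtree weights $\tilde w$ whose root sits at a child of $r$. This is an exponential-formula argument for rooted trees and is the content encoded in~\eqref{eq:fptrees}, so the truncated version follows by the same combinatorial decomposition applied only to trees of depth at most $N+1$. Once this bookkeeping is settled, everything else is monotone convergence and the algebraic identity, and no new analytic input is needed.
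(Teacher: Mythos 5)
Your proof is correct, but for the direction (ii) $\Rightarrow$ (i) it takes a different route from the one the paper actually writes out. The paper's proof of Proposition~\ref{prop:fptrees} only sketches this direction in the proposition's proof environment, remarking that it ``can be proven by an induction similar to Ueltschi (2004)'' --- which is essentially your depth-truncation argument --- and then supplies, in Section~\ref{sec:trees}, an \emph{alternative} proof via Kirkwood-Salsburg equations for forests: it linearizes the fixed point equation~\eqref{eq:fptrees} by passing to the product generating functions $F_k(q_1,\ldots,q_k;z)=\prod_i \tilde T_{q_i}^\bullet(z)$, truncates by the total number of vertices $N$ rather than by depth, and runs the induction on $N$ through the resulting linear recursion --- deliberately mirroring the operator $K_z$ used for the correlation functions in Section~\ref{sec:ks}. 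Your version buys a shorter, self-contained argument that stays entirely at the level of single rooted trees; the paper's version buys the structural parallel with the Kirkwood-Salsburg fixed point problem, which is the point of Section~\ref{sec:trees}. Your direction (i) $\Rightarrow$ (ii), setting $a(q)=\log \tilde T_q^\circ(z)$ and reading off~\eqref{eq:suff1} with equality from the fixed point identity, is exactly the paper's argument. The one piece of bookkeeping you flag --- that the decomposition of a rooted tree into the root's children and their subtrees produces the factor $\prod_{i<j}(1+f(x_i,x_j))$ at the top level together with the $1/k!$ --- is indeed the content of~\eqref{eq:fptrees} and is compatible with depth truncation, since a tree has depth at most $N+1$ precisely when all root subtrees have depth at most $N$; so no gap remains.
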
 
 
\begin{proof} 
The implication (i)$\Rightarrow$(ii) is again an immediate consequence of the fixed point equation~\eqref{eq:fptrees} (set $a(q) = \log \tilde T_{q}^\circ(z)$). The implication (ii)$\Rightarrow$(i) can be proven by an induction similar to~\cite{ueltschi2004cluster}, or by adapting the arguments from~\cite{fernandez-procacci2007}. For the reader's convenience, we provide an alternative proof, based on Kirkwood-Salsburg equations for forests, in Section~\ref{sec:trees}.
\end{proof} 

\noindent The key bound~\eqref{eq:keyconv} is recovered from Proposition~\ref{prop:fptrees} with the help of the tree-graph inequality
\bes
	\bigl|\varphi_n^\mathsf T(x_0,\ldots,x_n) \bigr| \leq \sum_{T\in \mathcal T_n^\circ} \tilde w(T,0;x_1,\ldots,x_n)
\ees
which can be proven as in~\cite{fernandez-procacci2007}. 

\subsubsection{Extinction probabilities in branching processes}
It is instructive to compare the tree generating functions with extinction probabilities of  branching processes. For simplicity we treat the function $T_q^\circ$ only. Assume that 
\be \label{eq:bfinite}
	b(q) = \int_{\mathbb X} |f(q,y)| \dd \lambda_z(y) < \infty 
\ee
for all $x\in \mathbb X$. Let $\xi^q$ be a Poisson point process with intensity measure $|f(q;y)| \dd\lambda_z(y)$, defined on some probability space $(\Omega,\mathcal F,\mathbb P)$. It has Laplace functional 
\bes
	\mathbb E\Bigl[\e^{-\int_\mathbb X h \dd \xi^q}\Bigr] = \exp\Bigl( \int_{\mathbb X} (\e^{- h(x)}- 1) |f(q,x)| \dd \lambda_z(x)\Bigr),
\ees
the condition~\eqref{eq:bfinite} ensures that the expected number of points $b(q)$ of $\xi^q$ is finite hence in particular, $\xi^q$ has only finitely many points, $\mathbb P$-almost surely.

We can define a discrete-time, multi-type branching process  $(\eta^q_n)_{n\in \N}$ with type space $\mathbb X$ and ancestor $q$ as a Markov chain with state space $\mathcal N$  as follows: 
 $\eta_0^q:=\delta_q$, $\eta_1^q:= \xi_q$, and $\eta^q_{n+1}$ is obtained from $\eta^q_n$ by asking, roughly, that each point $x\in \eta^q_n$, has an offspring equal in distribution to $\xi^x$, with the usual independence assumptions. Precisely, $(\eta^q_n)_{n\in \N_0}$ is  a Markov chain with 
\bes
	\mathbb E\Bigl[ \e^{- \int_\mathbb X h \dd \eta^q_{n+1}}\Big|\, \eta^q_n\Bigr]
		= \exp\Biggl( \int_\mathbb X \Bigl(\int_{\mathbb X}  |f(x,y)|(\e^{- h(y)}- 1) \dd \lambda_z(y)\Bigr) \dd \eta_{n}^q(x)\Biggr) \quad \mathbb P\text{-a.s.}
\ees
for all $n\in \N_0$.  For $B\in \mathcal X_\mathsf b$, the quantity 
$\eta_n^q(B)$ represents the number of points in generation $n$ with type in $B$. 
The dependence on $z$ is suppressed from the notation of the branching process. 
The extinction probability 
\bes
	p(q) := \mathbb P\Bigl( \exists n \in \N:\ \eta_n^ q = 0 \Bigr)
\ees
as a function of $q$, is the smallest non-negative solution 
 of the fixed point problem
\be \label{eq:extinction-fixedpoint}
	p(q) =  \exp\Bigl( \int_{\mathbb X}(p(x)-1) |f(q,x)| \dd \lambda_z(x)\Bigr) \qquad (q\in \mathbb X).
\ee
I.e., if $\tilde p(\cdot)$ is another non-negative solution, then $p(q) \leq \tilde p(q)$ for all $q\in \mathbb X$.

For finite type spaces, this statement is proven in~\cite[Theorem 4.2.2]{jagers-branchingbook}, the proof for infinite $\mathbb X$ is similar. 

The similarity of the fixed point equations~\eqref{eq:extinction-fixedpoint} and~\eqref{eq:kptrees} for extinction probabilities and trees suggests a relation between convergence and extinction. The next proposition says that convergence of tree generating functions implies extinction of the branching process. 

\begin{prop} \label{prop:extinction} 
	Assume $\int_\mathbb X |f(q,x)| \dd \lambda_z(x) < \infty$ for all $q\in \mathbb X$ and let $p(q)$ be the extinction probability of the $z$-dependent branching process with ancestor $\delta_q$. If 
$T_q^\circ(z)< \infty$ for all $q\in \mathbb X$, then $p(q) =1$ for all $q\in \mathbb X$.
\end{prop}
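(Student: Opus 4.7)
The plan is to show that the hypothesis $T_q^\circ(z) < \infty$ forces the expected size $E_n(q) := \mathbb{E}[\eta_n^q(\mathbb{X})]$ of the $n$-th generation to decay to zero, from which extinction is immediate via Markov's inequality and the fact that the zero configuration is absorbing.

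First I would record the first-moment recursion for the branching process. Starting from $\eta_0^q = \delta_q$, the conditional Laplace formula for $\eta_{n+1}^q$ given $\eta_n^q$ yields for the intensity measure $\mu_n := \mathbb{E}[\eta_n^q(\cdot)]$ the recursion $\mu_{n+1}(\dd y) = \big(\int_{\mathbb{X}}|f(x,y)|\,\mu_n(\dd x)\big)\dd\lambda_z(y)$. Iterating from $\mu_0 = \delta_q$ and integrating out the final coordinate gives
\[
  E_n(q) = \int_{\mathbb{X}^n} |f(q,x_1)||f(x_1,x_2)|\cdots |f(x_{n-1},x_n)|\,\dd \lambda_z^n(\vect x).
\]

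Second, I would lower-bound $T_q^\circ(z)$ by restricting the sum over $\mathcal{T}_n^\circ$ to the $n!$ linear paths having the ghost vertex $0$ as one endpoint. For a permutation $\sigma \in S_n$, the path $0$--$\sigma(1)$--$\cdots$--$\sigma(n)$ contributes (after setting $x_0 = q$)
\[
  \int_{\mathbb{X}^n}|f(q,x_{\sigma(1)})||f(x_{\sigma(1)},x_{\sigma(2)})|\cdots|f(x_{\sigma(n-1)},x_{\sigma(n)})|\,\dd\lambda_z^n(\vect x),
\]
which equals $E_n(q)$ after renaming the integration variables. Summing the $n!$ contributions, dividing by $n!$, and retaining only these paths in $\mathcal{T}_n^\circ$ gives
\[
  T_q^\circ(z) \;\ge\; 1 + \sum_{n=1}^{\infty} E_n(q).
\]
Hence $T_q^\circ(q) < \infty$ implies $\sum_{n\ge 1} E_n(q) < \infty$, so $E_n(q) \to 0$.

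Finally, Markov's inequality gives $\mathbb{P}(\eta_n^q \neq 0) = \mathbb{P}(\eta_n^q(\mathbb{X}) \ge 1) \le E_n(q)$. The zero configuration is absorbing for the branching dynamics, so the sequence of events $\{\eta_n^q \neq 0\}$ is decreasing and non-extinction coincides with $\bigcap_{n\ge 0}\{\eta_n^q \neq 0\}$. Therefore
\[
  1 - p(q) \;=\; \lim_{n\to\infty}\mathbb{P}(\eta_n^q \neq 0) \;\le\; \lim_{n\to\infty} E_n(q) \;=\; 0,
\]
so $p(q) = 1$. There is no serious obstacle; the only step requiring mild care is the combinatorial reduction of the tree generating function to first moments, namely identifying the $n!$ rooted labeled paths with permutations of $\{1,\ldots,n\}$ and checking, by symmetry of the integrand under renaming of variables, that each contributes exactly $E_n(q)$.
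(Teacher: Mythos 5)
Your proof is correct, but it takes a genuinely different route from the paper's. You use a first-moment argument: the expected size $E_n(q)$ of generation $n$ is an $n$-fold chain integral of $|f|$ against $\lambda_z$, you recover $\sum_n E_n(q)$ inside $T_q^\circ(z)$ by keeping only the $n!$ paths in $\mathcal T_n^\circ$ having the ghost vertex $0$ as an endpoint (each contributing exactly $E_n(q)$ after relabelling, cancelling the $1/n!$), and you conclude by Markov's inequality together with the fact that the empty configuration is absorbing. All the steps check out: the path count is right (a path with endpoint $0$ is determined by the order in which $1,\dots,n$ are visited), the lower bound $T_q^\circ(z)\geq 1+\sum_{n\geq 1}E_n(q)$ is legitimate since all tree weights are non-negative, and the limit $1-p(q)=\lim_n \mathbb P(\eta_n^q\neq 0)\leq \lim_n E_n(q)=0$ uses only monotonicity of the non-extinction events. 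The paper instead invokes Proposition~\ref{prop:kptrees} to extract a weight function $a$ with $\int_{\mathbb X}|f(q,x)|\e^{a(x)}\dd\lambda_z(x)\leq a(q)$, shows the expectation operator $M$ is a contraction of norm at most $1/\mathrm e$ in the weighted sup norm $\|g\|_a=\sup_q|g(q)|\e^{-a(q)}$, and combines the fixed-point equation~\eqref{eq:extinction-fixedpoint} with $\e^s\geq 1+s$ to get $\mathbf 1-p\leq M(\mathbf 1-p)$ and hence $\|\mathbf 1-p\|_a=0$. Your argument is more elementary (no fixed-point characterization of $p$, no weighted norm) and yields the extra information $\sum_n E_n(q)<\infty$, i.e., finiteness of the expected total progeny; the paper's argument is the one that generalizes in the spirit of Harris's spectral criterion and is the template the author hopes to adapt to the Fern{\'a}ndez--Procacci trees at the end of Section~\ref{sec:branching}, where a first-moment path bound is less readily available.
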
 

\noindent The proof adapts a classical result~\cite[Chapter III, Theorem 12.1]{harris-branchingbook}: if the principal eigenvalue of the so-called expectation operator is smaller or equal to $1$ and some additional regularity conditions hold true, then the branching process goes extinct. 

\begin{proof}[Proof of Proposition~\ref{prop:extinction}]
	Consider the kernel on $\mathbb X$ given by $M(q, \dd x):= |f(q,x)|\lambda_z(\dd x)$ and use the same letter for the associated integral operator 
	$$
		(M g)(q):= \int_{\mathbb X} |f(q,x)| g(x) \dd \lambda_z(x) \qquad \bigl(q\in \mathbb X,\, g\in \mathscr L^ \infty(\mathbb X, \mathcal X)\bigr). 
	$$
	$M(q,B)$ represents the expected number of children with type in $B$ of a type-$q$ individual. The \emph{expectation operator} $M$ replaces the mean number of offspring in a single-type branching process. Condition~\eqref{eq:bfinite} guarantees that the expected value $M(q,\mathbb X)$ of the total number of children of a type-$q$ individual is finite for all $q\in \mathbb X$. 
	However $\sup_{q\in \mathbb X} M(q,\mathbb X) =\sup_{q\in \mathbb X}b(q)$ can be infinite, which is why Theorem~12.1 in~\cite[Chapter III.12]{harris-branchingbook} is not applicable; moreover the image $M$ of a bounded function $g$ is not necessarily bounded.  Therefore it is preferable to work with weighted supremum norms. By the convergence of $T_q^\circ (z)$ and Proposition~\ref{prop:kptrees}, there is a function $a:\mathbb X\to \R_+$ such that $\int_{\mathbb X} |f(q,x)|\exp( a(x))\dd \lambda_z(x) \leq a(q)$ for all $q$. We define 
	$$
		||g||_a:= \sup_{q\in \mathbb X} |g(q)|  \e^{-a(q)} 
	$$
	and note that $M$ is a contraction with respect to this norm. Indeed, if $g$ is a function with $||g||_a<\infty$, then 
	\begin{align*} 
		\bigl|(M g)(q) \bigr|& \leq \int_{\mathbb X} |f(q,x)| \e^{a(x)} || g||_{a} \dd \lambda_z(x) 
			\leq a(q) ||g||_a 
	\end{align*} 
	and 
	\bes
		||Mg||_a \leq ||g||_a  \sup_{q\in \mathbb X} a(q)\, \e^{- a(q)}  \leq  ||g||_a \sup_{\alpha \geq 0}\alpha\, \e^ {-\alpha} =\frac{1}{\mathrm e} ||g||_a.  
	\ees
	The fixed point equation for the extinction probability and the inequality $\exp( s) \geq 1+s$ yield 
	\bes
		p(q) \geq 1 + \int_{\mathbb X} |f(q,x)| (p(x) - 1) \dd \lambda_z(x)  = 1+ \bigl(M(p-\mathbf 1)\bigr)(q)
	\ees
	with $\mathbf 1$ the constant function with value $1$. 
	Consequently $\mathbf 1 - p \leq M (\mathbf 1- p)$ pointwise on $\mathbb X$ and 
	\bes
		|| \mathbf 1 - p||_a \leq || M (\mathbf 1 - p)||_a \leq \frac{1}{\mathrm e} || \mathbf 1 - p||_a.
	\ees
	It follows that $||\mathbf 1  - p||_a =0$ and $p(q) =1$ for all $q\in \mathbb X$. 
\end{proof} 

\noindent Below we provide an example for which the branching process goes extinct but the tree generating functions diverge. First we have a closer look at the relation between trees and extinction probabilities. Since every individual has only finitely many children by~\eqref{eq:bfinite}, the extinction probability $p(q)$ is equal to the probability that the total offspring $N_\infty^q:= \sum_{n=0}^ \infty \eta_n^q(\mathbb X)$ is finite (to simplify formulas below, we include the ancestor in the offspring count). The probability that the total offspring consists of exactly $n$ individuals is in turn a sum over trees on $n$ vertices, rooted at the ancestor.  Leaves correspond to individuals without offspring. 
 Hence
\begin{align} \label{eq:pqalt}
	p(q) & = \sum_{n\in \N_0} \P( N_\infty^q = n) \\
			& =	\e^{- z(q) b(q)}\Bigl( 1+ \sum_{n=1}^\infty \frac{1}{n!}\int_{\mathbb X^n} \sum_{T\in \mathcal T_n^\circ} |w(T;q,x_1,\ldots,x_n)|  \prod_{i=1}^n \e^{- z(x_i) b(x_i)}  \dd \lambda_z^n(\vect x) \Bigr) \notag  \\
		& = \e^{- z(q) b(q)} T_q^\circ( z\e^{-b z}). 
\end{align}
More generally, let $\eta^q:=\sum_{n=0}^\infty \eta^q_n$ be the total progeny of $\delta_q$, including the ancestor itself. Then for all $h:\mathbb X\to \R$, we have 
\bes
	 \e^{h-bz} T_q^\circ( z \e^{h-bz}) = \mathbb E\Bigl[  \e^{\int_\mathbb X h \dd \eta^q} \1_{\{N_\mathbb X(\eta^q) < \infty\} } \Bigr].
\ees
and 
\bes
	 T_q^\circ( z) = \mathbb E\Bigl[  \e^{\int_\mathbb X b z \dd \eta^q} \1_{\{N_\mathbb X(\eta^q) < \infty\} } \Bigr].
\ees
This equality together with Proposition~\ref{prop:extinction} shows that $T_q^\circ (z)$ is finite for all $q\in \mathbb X$ if and only if the branching process goes extinct for all possible ancestor choices and in addition the exponential moment above of the total progeny is finite. 

\begin{example}[Galton-Watson process and hard spheres] 	
	Suppose that $z(q) = z$ and $b(q) = b$ are independent of $q$. This is the case, for example, for a gas of hard spheres of fixed radius $R$ in $\R^d$, and then $b = |B(0,2R)|$, compare Example~\ref{ex:hard-spheres}. Then the generation counts  $N_n:= N_\mathbb X(\eta^q_n)$, $n\in \N_0$, form a Galton-Watson process with offspring distribution $\mathrm{Poi}(z b)$. The distribution of the total progeny (including ancestor) $N= \sum_{n=0}^\infty N_n$ satisfies 
	\bes
		\mathbb P\bigl(N=n\bigr) = \frac{(b n)^{n-1}}{n!} z^{n-1} \e^{-n bz} \qquad (n\in \N),
	\ees
	compare the \emph{Borel distribution}~\cite{borel1942}. 
	The tree generating function becomes 
	\bes
		T^\circ (z) = \sum_{n=1}^\infty\frac{(bn)^{n-1}}{n!} z^{n-1} = \sum_{n\in \N} \e^{nb z} \P(N=n). 
	\ees 
	We have $T^\circ(z)< \infty$ if and only if $b z\leq 1/\mathrm{e}$, a condition stronger than subcriticality $b z \leq 1$. 
\end{example} 

We leave open whether the analogue of Proposition~\ref{prop:extinction} holds true  for the  Fern{\'a}ndez-Procacci trees from Proposition~\ref{prop:fptrees}, but sketch how some first steps might be implemented. Define 
\bes
	B_n(q;x_1,\ldots,x_n):= \prod_{i=1}^n |f(q;x_i)| \prod_{1\leq i <j \leq n} \bigl(1+ f(x_i,x_j)\bigr)
\ees
and assume that
\bes
G_q(z):= 1 + 	\sum_{n=1}^ \infty \frac{1}{n!} \int_{\Lambda^n} B_n(q;x_1,\ldots,x_n) \dd \lambda_z^n(\vect x) < \infty 
\ees
for all $q\in \mathbb X$. We may then consider a branching process whose branching mechanism is such that 
\bes
	\mathbb E\Bigl[ \e^ {- \int_{\mathbb X} h \dd \eta_{n+1}} \mid \eta_n = q\Bigr] = \frac{G_q(z\e^ {-h})}{G_q(z)}. 
\ees
The expectation operator is 
\bes
	 M g(q) = \frac{1}{G(z)} \sum_{n=1}^\infty \frac{1}{n!} \int_{\Lambda^{n}} B_n(q;x_1,\ldots,x_n) \Bigl(\sum_{i=1}^n g(x_i)\Bigr) \dd\lambda_z^n(\vect x). 
\ees
The fixed point problem for extinction probabilities becomes $p(q) = G_q(z p) /G_q(z)$. The inequality $\prod_{i=1}^n p(x_i)\geq 1 + \prod_{i=1}^n ( p(x_i)-1)$ yields 
$
	p (q)  \geq 1+ \bigl( M (p - \mathbf 1)\bigr)(q)
$
thus again, we have the pointwise inequality $\mathbf 1 - p \leq M(\mathbf 1 - p)$.  However it is not clear how to turn the convergence condition $G_q(z\e^a) \leq\e^{a(q)}$ into a contraction estimate for the operator $M$.

\subsection{Random connection model and disagreement percolation}  \label{sec:boolean}

\emph{Disagreement percolation} provides an expansion-free route to proofs of uniqueness and exponential mixing for Gibbs measures, see~\cite[Chapter 7]{georgii-haggstrom-maes2001} for Gibbs measures on lattices and~\cite{hofertemmel2015,hofertemmel-houdebert2017} as well as~\cite[Section 2.7]{dereudre2017gibbsintro} for models in $\R^d$, and~\cite{benes-hofer-last-vecera2019} for Gibbs particle processes.  It is an interesting question which approach---percolation or convergence of cluster expansions---trumps the other as far as proving uniqueness of the infinite-volume Gibbs measure goes. In full generality, the question remains open. The purpose of this section is, nevertheless, to  provide some very partial answers that might help attack the question in a more systematic way in the future. 

First we recall some known facts on the relation between percolation and uniqueness of Gibbs measures. If the interaction $v(x,y)$ has finite range $R$, i.e., $v(x,y) =0$ when $\mathrm{dist}(x,y)\geq R$ and there is absence of percolation in the Boolean percolation model with deterministic connectivity radius $R$, then the Gibbs measure is unique. This applies, in particular, to a gas of hard spheres with fixed radius $R$. A variant for Boolean percolation models with random connectivity radius~\cite{hofertemmel-houdebert2017} allows for an extension to mixtures of hard spheres. Uniqueness of Gibbs measures and absence of percolation are linked for finite-range interactions that take negative values as well, as long as some additional conditions are satisfied~\cite{dereudre2017gibbsintro}. Thus  absence of Boolean percolation implies uniqueness of Gibbs measures. 

For a gas of hard spheres, condition~\eqref{eq:suff2} with $t=0$ implies extinction of a branching process by Proposition~\ref{prop:extinction}, which in turn implies absence of percolation~\cite[Chapters~3 and~6]{meester-roy1996book}.  As a consequence, for a gas of hard spheres, condition~\eqref{eq:suff2} is more restrictive than asking for absence of Boolean percolation. We do not know, however, if convergence of cluster expansions or condition~\eqref{eq:suff1} also imply absence of Boolean percolation. 

For a non-negative pair potential that has finite range and is everywhere finite,  in constrast, if $\beta$ is sufficiently small, conditions ~\eqref{eq:suff1} and~\eqref{eq:suff2} can be  \emph{less} restrictive than asking for absence of Boolean percolation (we thank C. Hofer-Temmel and the anonymous referee for this remark). 
Indeed, asking for absence of Boolean percolation imposes a condition $z\leq z^*$ (or $z<z^*$) for some finite, $\beta$-independent $z^*$. Indeed, consider for simplicity $\mathbb X= \R^d$ and a pair potential that is translationally invariant, then condition~\eqref{eq:suff2} reads $z< (\e\,  C(\beta))^{-1}$ with $C(\beta) = \int_{\R^d} (1 - \exp( - \beta v(0,x)))\dd x$. 
Clearly $\lim_{\beta \to 0} C(\beta) =0$ hence for small $\beta >0$, the condition $z < (\e\, C(\beta))^{-1}$ is \emph{less} restrictive than the condition $z<z^*$. 

We conclude this section with an open question. The only information about the interaction kept by the associated Boolean percolation model is the range of the interaction. The natural question arises whether there is a more refined model whose connectivity might relate to properties of the Gibbs measures. For non-negative interactions, a natural candidate is the \emph{random connection model} with connectivity probability given by 
$$
	\varphi(x,y):= |f(x,y)|\qquad (x,y\in \mathbb X). 
$$
See~\cite{meester-roy1996book,last-ziesche2017} for definitions in the translationally case in $\R^d$ and~\cite{last-nestmann-schulte2018} for the general setup. Notice that, by the non-negativity of the pair potential, we have $|f(x,y)| = |\e^{-v(x,y)} - 1|\leq 1$ and may indeed interpret $\varphi(x,y)$ as a probability. The extinction of the branching process from Proposition~\ref{prop:extinction}, together with arguments from~\cite{meester-roy1996book, meester-penrose-sarkar1997}, implies absence of percolation in the random connection model, see Appendix~\ref{app:RCM} for a more precise statement. 
As a consequence, condition~\eqref{eq:suff2} for non-negative interactions implies absence of percolation in the random connection model, much in the same way as condition~\eqref{eq:suff2} for hard spheres implies absence of percolation in a Boolean model. To the best of our knowledge, however, there is no theorem in the literature that shows that absence of percolation in the random connection model implies uniqueness of the Gibbs measure---we do not know whether there is a result in the spirit of disagreement percolation that would use the random connection model rather than Boolean percolation. 

\subsection{Cumulants of  double stochastic integrals} \label{sec:cumulants} 

Here we explain how cluster expansions relate to another kind of diagrammatic expansion, namely, expansions of cumulants of multiple stochastic integrals with respect to compensated Poisson random measures, see~\cite[Chapter~7]{peccati-taqqu2011book}, \cite[Chapter 12.2]{last-penrose2017book}, 
and~\cite{last-penrose-schulte-thale2014moments}.
To that aim we provide two expressions for the cumulants of random variables of the form $\int_{\mathbb X^2} u \dd \eta^{(2)}$ where $\eta$ is a Poisson point process with intensity measure $\lambda_z$ and $u:\mathbb X\times \mathbb X\to \R$ is a function that satisfies some integrability conditions.  We may view such random variables as double integrals with respect to a Poisson random measure $\eta$ (not compensated). The first formula involves edge-labelled graphs with multiple edges (Proposition~\ref{prop:stochmoment}), the second formula  involves precisely the pairs of partitions  (Proposition~\ref{prop:cumulants-diagrams}) that appear in the literature on multiple stochastic integrals~\cite{peccati-taqqu2011book}. 

To the best of our knowledge, these formulas are new, however our principal interest lies in the reasoning that allows us to go from the connected graphs of  cluster expansions to cumulants and pairs of partitions, via multigraphs. Roughly, the $n$-th coefficient of the cluster expansion is a sum over graphs on $n$ vertices while the $m$-th moment of a double stochastic integral is a sum over multigraphs with $m$ edges. 

We start from the following observation.  Let $u:\mathbb X\times \mathbb X\to \R$ and $\beta \in \R$. Suppose that  $v= - \beta u$ satisfies the assumptions of Theorem~\ref{thm:finite-volume}. Then the latter theorem provides an expansion of the cumulant generating function of $\frac12 \beta \int_{\mathbb X^2} u\dd \eta^{(2)}$, with $\eta$ a Poisson point process of intensity $\lambda_z$. The expansion is not in powers of $\beta$ but rather, if $z(x)\equiv z$ is independent of $x$, in powers of $z$. It is a sum over connected graphs, each graph comes with a product of edge weights $\exp( \beta u(x_i,x_j))-1$. In order to obtain the cumulants, we need to understand the expansion in powers of $\beta$ rather than $z$. Now, every edge weight is expanded as 
\bes
	\e^{\beta u(x_i,x_j)}-1 = \sum_{m_{ij}=1}^ \infty \frac{\beta^ {m_{ij}}}{m_{ij}!}\, u(x_i,x_j)^ {m_{ij}}.
\ees
The right-hand side is best interpreted as a sum over edge multiplicities, and thus graphs with multiple edges (but no self-edges $i- i$) naturally appear. It is convenient to label not only the vertices but also the edges. 

\begin{definition}
	Let $A$ and $V$ be two non-empty sets and $\mathcal E_2(V) = \{e\subset V\mid \#e =2\}$. 	\begin{itemize} 
		\item A \emph{labelled multigraph} $\gamma$ with vertex labels $V$ and edge labels $A$ is a map from $A$ to $\mathcal E_2(V)$. The set of such multigraphs is denoted $\mathcal M(V,A)$. 
		\item The \emph{multiplicity} of an edge $\{i,j\} \in \mathcal E_2(V)$ in $\gamma\in \mathcal M(V,A)$ is $m_{ij}(\gamma) = \# \{a \in A\mid \gamma(a) = \{i,j\} \}$. 
		\item A multigraph $\gamma$ is \emph{spanning} if every vertex $i\in V$ belongs to some edge in $\gamma$, i.e., $m_{ij}(\gamma)\geq 1$ for some $j\in V$. The set of spanning multigraphs is denoted $\mathcal M_s(V,A)$.
		\item A multigraph $\gamma$ is \emph{connected} if, for all $i,j\in V$, there exist $k\in \N$ and a sequence $(a_1,\ldots,a_k)\in A^k$ such that $i\in \gamma(a_1)$, $j\in \gamma(a_k)$, and $\gamma(a_r) \cap \gamma(a_{r+1})\neq \varnothing$ for all $r\in \{1,\ldots,k-1\}$.  The set of connected multigraphs is denoted $\mathcal M_c(V,A)$. 
	\end{itemize}
\end{definition} 

\begin{prop} \label{prop:stochmoment}
	 Let $\eta$ be a Poisson point process with intensity measure $\lambda_z$ and $u:\mathbb X\times \mathbb X\to \R$ a symmetric function. Suppose that 
	 \be \label{eq:smsuff}
	 	\int_{\mathbb X^n} \prod_{1 \leq i < j \leq n} \bigl| u(x_i,x_j)\bigr|^{m_{ij}(\gamma)} \dd \lambda^n(\vect x) < \infty
	 \ee
	 for all $n\in \{2,\ldots,2m\}$ and $\gamma\in \mathcal M_s([n],[m])$.  Then $\mathbb E[(\int_{\mathbb X} |u|\dd \eta^{(2)})^m]< \infty$ and the $m$-th moment and the $m$-th cumulant of $\int_{\mathbb X^2 } u \dd \eta^{(2)}$  are given by sums over spanning and connected multigraphs as
	\begin{align*}
		\mathbb E\Bigl[ \Bigl( \frac12 \int_{\mathbb X^2} u\dd \eta^{(2)} \Bigr)^m\Bigr] & = \sum_{n=2}^{2m} \frac{1}{n!}\int_{\mathbb X^n} \sum_{\gamma \in \mathcal M_s([n],[m])}\prod_{1 \leq i < j \leq n} u(x_i,x_j)^{m_{ij}(\gamma)} \dd \lambda_z^n(\vect x) \\
		\kappa_m \Bigl( \frac12 \int_{\mathbb X^2} u\dd \eta^{(2)} \Bigr) & = \sum_{n=2}^{2m} \frac{1}{n!}\int_{\mathbb X^n} \sum_{\gamma \in \mathcal M_c([n],[m])}\prod_{1 \leq i < j \leq n} u(x_i,x_j)^{m_{ij}(\gamma)} 
	\dd \lambda^n_z(\vect x),
	\end{align*}
	with absolutely convergent integrals.
\end{prop}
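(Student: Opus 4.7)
The plan is to derive both identities from the elementary Poisson expansion of $\E[\e^{\beta Y}]$, where $Y := \tfrac12 \int_{\mathbb X^2} u\, \dd \eta^{(2)}$, combined with the Mayer--Ruelle exponential formula for graphs, and then to read off the coefficient of $\beta^m$ to pass from graphs on $n$ vertices to multigraphs with $m$ edges. Fix a bounded $\Lambda \in \mathcal X_\mathsf b$ (to be sent to $\mathbb X$ in the end). Using $\e^{\beta Y(\eta)} = \prod_{\{x,y\}\subset \eta} (1+ f_\beta(x,y))$ with $f_\beta := \e^{\beta u} - 1$, conditioning on $\eta(\Lambda)$ and expanding the product over subsets of pairs yields
\bes
	\E\bigl[\e^{\beta Y}\bigr] = \e^{-\lambda_z(\Lambda)} \sum_{N\geq 0} \frac{1}{N!} \int_{\Lambda^N} \sum_{G\in \mathcal G_N} \prod_{\{i,j\}\in E(G)} f_\beta(x_i,x_j)\, \dd \lambda_z^N(\vect x).
\ees
For the moment formula one separates the isolated vertices of each $G$: integrating them out produces a factor $\e^{\lambda_z(\Lambda)}$ that cancels the prefactor and leaves a sum over spanning simple graphs on $[n]$ with integration over $\mathbb X^n$ (and no residual dependence on $\Lambda$).

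Next I would expand each edge weight as $f_\beta = \sum_{k\geq 1} (\beta u)^k/k!$ and collect powers of $\beta$. For a spanning simple graph $G$ on $[n]$ with positive edge multiplicities $(m_{ij})_{\{i,j\}\in E(G)}$ of total weight $m := \sum m_{ij}$, the multinomial coefficient $m!/\prod m_{ij}!$ counts the ways to label the $m$ edges by $[m]$, so pairs $(G, (m_{ij}))$ are in weighted bijection with spanning labelled multigraphs $\gamma \in \mathcal M_s([n],[m])$ carrying those same edge multiplicities. Matching the $\beta^m/m!$-coefficient with $\E[Y^m]/m!$ yields the first formula; the sum terminates at $n=2m$ since a multigraph with $m$ edges has at most $2m$ non-isolated vertices. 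The cumulant formula is then obtained by the identical expansion applied to $\log\E[\e^{\beta Y}]$: the Mayer--Ruelle exponential formula (a purely algebraic identity valid whenever both series converge absolutely, independently of the sign of the pair potential) replaces spanning graphs by connected graphs $\mathcal C_n$, and the same bookkeeping replaces $\mathcal M_s$ by $\mathcal M_c$.

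The principal obstacle is justifying absolute convergence and all interchanges of sums, integrals and coefficient extractions, all the more so because Theorem~\ref{thm:finite-volume} in its stated form requires a non-negative pair potential whereas $v := -\beta u$ is signed here. I would dominate every manipulation by its analogue with $|u|$ in place of $u$; the hypothesis~\eqref{eq:smsuff} then produces a summable majorant for each $\beta^m/m!$-coefficient separately, the Mayer--Ruelle identity applies inside any bounded $\Lambda$ where the cluster expansion converges for small real $\beta$, and the passage $\Lambda \nearrow \mathbb X$ is handled by monotone convergence just as in Section~\ref{sec:finite-volume}. Once absolute convergence is in hand, the identification of $\beta^m/m!$-coefficients on both sides of $\log\E[\e^{\beta Y}] = \sum_m (\beta^m/m!)\kappa_m(Y)$ and $\E[\e^{\beta Y}] = \sum_m (\beta^m/m!)\E[Y^m]$ is unambiguous, since only finitely many $n\leq 2m$ contribute to each coefficient.
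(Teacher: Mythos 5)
There is a genuine gap. Your entire derivation is routed through the exponential generating function $\mathbb E[\e^{\beta Y}]$ and the cluster expansion of its logarithm, but the hypothesis~\eqref{eq:smsuff} is only a moment condition: it controls $\mathbb E[(\int |u|\,\dd\eta^{(2)})^m]$ for the fixed $m$ at hand and gives no control whatsoever over exponential moments. Already for bounded $\Lambda$ and $u\equiv c>0$ constant, $Y=c\binom{\eta(\Lambda)}{2}$ with $\eta(\Lambda)$ Poisson, so $\mathbb E[\e^{\beta Y}]=\infty$ for every $\beta>0$ while~\eqref{eq:smsuff} holds and all moments are finite. In that situation there is no convergent Taylor series in $\beta$ from which to ``read off the coefficient of $\beta^m/m!$'': the identities $\mathbb E[\e^{\beta Y}]=\sum_m(\beta^m/m!)\mathbb E[Y^m]$ and $\log\mathbb E[\e^{\beta Y}]=\sum_m(\beta^m/m!)\kappa_m(Y)$ are not available, and your proposed fix of ``dominating every manipulation by its analogue with $|u|$'' would require $\mathbb E[\exp(\tfrac\beta2\int|u|\,\dd\eta^{(2)})]<\infty$, i.e.\ stability and convergence of the partition function for the signed potential $-\beta|u|$ --- again not implied by~\eqref{eq:smsuff}. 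The paper flags exactly this point: the generating-function derivation is presented only as an instructive heuristic \emph{after} the proposition, under the additional hypotheses of Theorem~\ref{thm:finite-volume}, precisely because it does not prove the proposition as stated.

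The paper's actual proof avoids generating functions entirely. It expands the $m$-th power $\bigl(\sum_{i<j}u(x_i,x_j)\bigr)^m$ multinomially, which directly produces a sum over edge-labelled multigraphs $\gamma\in\mathcal M([\kappa],[m])$; grouping by the set of non-isolated vertices gives $Y^m=\sum_{n=2}^{2m}\frac1{n!}\int F_{nm}\,\dd\eta^{(n)}$ with $F_{nm}$ a sum over spanning multigraphs, and the multivariate Mecke equation (factorial moment measures of the Poisson process) then yields the moment formula under~\eqref{eq:smsuff} alone. For the cumulants it does not take a logarithm: it decomposes each spanning multigraph into connected components, which partitions both $[n]$ and the edge-label set $[m]$, recovers the defining moment--cumulant recursion $\mathbb E[Y^m]=\sum_r\sum_{\{A_1,\dots,A_r\}\in\mathcal P_m}\kappa_{\#A_1}\cdots\kappa_{\#A_r}$, and identifies the candidate expressions with the cumulants by uniqueness of the solution of that system, checking convergence by induction on the order. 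If you want to keep the combinatorial bookkeeping of your second paragraph (multinomial coefficients as edge labellings), that part is sound and is essentially what the direct expansion of $Y^m$ produces; but the scaffolding around $\mathbb E[\e^{\beta Y}]$ and $\log\Xi_\Lambda$ must be replaced by this direct computation for the proof to go through under the stated hypotheses.
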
 

\noindent The proposition is proven in Section~\ref{sec:stochmoment}. The proof does not use the previous convergence theorems, which is why it works under different convergence conditions (compare Eq.~\eqref{eq:smsuff} and Eq.~\eqref{eq:suff1} with $v=-\beta u$), much in the same way as moments of a random variable may be finite even though exponential moments are infinite.  Nevertheless it is instructive to derive the formula for the cumulants from  Theorem~\ref{thm:finite-volume} on the expansion of the pressure in finite volume, which we now do.  Suppose that $v= -\beta u$ satisfies the conditions of Theorem~\ref{thm:finite-volume}, and that $\lambda_z(\mathbb X)< \infty$ so that we may set $\Lambda = \mathbb X$.   Then we have 
\bes
	\log \mathbb E\Bigl[ \e^{\frac12 \beta \int_{\mathbb X^2} u \dd \eta^{(2)}}\Bigr] 
	 = \sum_{n=2}^\infty \frac{1}{n!} \int_{\mathbb X} \Bigl(  \sum_{G\in \mathcal C_n}\prod_{\{i,j\}\in E(G)}\bigl( \e^{\beta u(x_i,x_j)}-1\bigr) \Bigr) \dd \lambda_z^n(\vect x). 
\ees
Expanding the exponential, we find that for each $n$, the integrand is a sum over pairs $(G, \vect m)$ that consist of a connected graph $G$ and a vector of integers $\vect m = (m_{ij})_{i<j}$ with $m_{ij}\geq 1$ if and only if $\{i,j\} \in E(G)$. The pair $(G,\vect m)$ is best thought of as a graph with multiple edges (edges are non-labelled). The vector of multiplicities determines the graph uniquely, let $C_n$ be the set of multiplicity assignments for which the associated graph is connected. We get
\begin{align*}
	 \sum_{G\in \mathcal C_n}\prod_{\{i,j\}\in E(G)}\bigl( \e^{\beta u(x_i,x_j)}-1\bigr) 
	 & = \sum_{G\in \mathcal C_n}\prod_{\{i,j\}\in E(G)} \Bigl( \sum_{m_{ij}=1}^\infty \frac{1}{m_{ij}!} \bigl( \beta u(x_i,x_j)\bigr)^{m_{ij}} \Bigr) \\
	 & =\sum_{m=1}^\infty \frac{\beta^m}{m!} \sum_{\substack{ (m_{ij})_{1 \leq i <j \leq n}\\ m_{ij}\in \N_0 \\ \sum_{i<j} m_{ij} =m}  }\   \frac{m!}{\prod_{1\leq i <j \leq n} m_{ij}!}  \prod_{1 \leq i < j \leq n} u(x_i,x_j)^ {m_{ij}} \1_{C_n}(\vect m). 
\end{align*} 
The multinomial coefficient is equal to the number of multigraphs with edge labels $\{1,\ldots,m\}$  and prescribed multiplicities $m_{ij}$. Thus we find 
\be \label{eq:doublexp} 
		\log \mathbb E\Bigl[ \e^{\frac12 \beta \int_{\mathbb X^2} u \dd \eta^{(2)}}\Bigr] 
 = \sum_{m=1}^ \infty \frac{\beta^ m}{m!} 	 \sum_{n=2}^\infty\frac{1}{n!} 
 	\int_{\mathbb X^ n} \sum_{\gamma\in \mathcal M_c([n],[m])}\prod_{1 \leq i < j \leq n} u(x_i,x_j)^{m_{ij}(\gamma)}  \dd \lambda_z^n(\vect x).
\ee
Since the cumulants are defined by the relation
\bes
	\log \mathbb E\Bigl[ \e^{\frac12 \beta \int_{\mathbb X^2} u \dd \eta^{(2)}}\Bigr]  = \sum_{m=1}^ \infty \frac{\beta^m}{m!}\, \kappa_m\Bigl( \frac12 \int_{\mathbb X^2} u\dd \eta^{(2)} \Bigr) 
\ees
we may read them off from Eq.~\eqref{eq:doublexp} and obtain the expression from Proposition~\ref{prop:stochmoment}. \\

Next we explain how to go from multigraphs to partition pairs so as to obtain expressions closer to~\cite{peccati-taqqu2011book}. Let $\gamma \in \mathcal M_s(V,A)$ be a spanning multigraph on $V$ with edge labels $A$. We define an associated pair $(\pi,\sigma)$ of partitions as follows: \begin{itemize} 
	\item  First we define a new set $S$ of ``dedoubled'' vertices: every vertex $v\in V$ gives rise to as many points in $S$ as there are edges to which it belongs. The new vertices $s$ are labelled by the parent vertex $v\in V$ and the edge label $a$. Precisely, we set $S:= \{(v,a) \in V\times A  \mid  v\in \gamma(a)\}$. 
	\item The partition $\pi$ has the blocks $B_a = \{(v,a)\mid v\in \gamma(a)\}$, $a \in A$: it groups dedoubled vertices $s$ connected by an edge. Every block $B_a$ has cardinality exactly $2$. 
	\item  The partition $\sigma$ lumps together new vertices $s$ that come from a common underlying vertex $v\in V$.  Put differently, the blocks of $\sigma$ are the sets
	$T_v = \{(a,v)\mid v\in \gamma(a)\}$, $v\in V$. Note that the $T_v$'s are non-empty because the graph is spanning.
\end{itemize}

Notice that for all $a\in A$, $v\in V$, the set $B_a\cap T_v$ is a singleton if the vertex $v$ belongs to the edge with label $a$, and empty if it does not; in particular $B_a \cap T_v$ is either empty or a singleton. If the multigraph $\gamma$ is connected, then so is $(\pi,\sigma)$, in the sense given below.

\begin{definition}[\cite{peccati-taqqu2011book}]\label{def:nonflat}
	Let $S$ be a finite non-empty set and $(\pi,\sigma)\in \mathcal P(S)\times \mathcal P(S)$ a pair of set partitions of $S$. 
	\begin{itemize} 
		\item The pair is \emph{non-flat} if for all blocks $B\in \pi$ and $T\in \sigma$, the intersection $B\cap T$ is either empty or a singleton.
		\item The pair is \emph{connected} if for every strict subset $M\subsetneq S$, there is a block $B$ of $\pi$ or $\sigma$ such that $B$ intersects both $M$ and $S\setminus M$. 
%		\item A labelled set partition of $S$ with labels in $A$ is a map $\hat \pi:S\to A$. The underlying non-labelled set partition is $\pi = \{ \hat \pi^{-1}(\{a\})\mid a\in A\}$. 
	\end{itemize} 
\end{definition} 

\noindent Equivalently, a pair $(\pi,\sigma)$ is non-flat and connected if $\pi \wedge \sigma$ is the partition into singletons and $\pi \vee \sigma$ is the partition consisting of a single block $S$, where $\wedge$ and $\vee$ refer to the join and meet in the lattice of set partitions. 
	
\begin{remark} [Gaussian diagrams] 
	The pair of partitions $(\pi,\sigma)$ is not only non-flat and connected, but also has the property that every block of $\pi$ has cardinality exactly $2$, because we only allow for graphs with edges $\{i,j\}$ (hypergraphs associated with multi-body interactions would include hyperedges consisting of $3$ vertices or more). Peccati and Taqqu~\cite{peccati-taqqu2011book} call pairs where instead $\sigma$ has only blocks of cardinality $2$ \emph{Gaussian}, and associate graphs with  such pairs as well; their graphs allow for self-edges and restrict the degree of the vertices to $2$,  a type of graphs clearly different from ours. 
\end{remark}

 Let $S$ be a finite set and $\hat \sigma =(T_1,\ldots,T_n)$ an ordered set partition of $S$ with $n$ blocks; let $\sigma = \{T_1,\ldots,T_n\} \in \mathcal P(S)$ be the underlying set partition.
 For $s\in S$, let $i(s)\in \{1,\ldots,n\}$ be the label of the block to which $s$ belongs, i.e., $s\in T_{i(s)}$. 
  The ordered partition $\hat \sigma$ induces an embedding of $\mathbb X^n$ into $\mathbb X^S$ defined by $(x_1,\ldots,x_n)\mapsto (x_{i(s)})_{s\in S}$.  With any given map $g:\mathbb X^n\to \R$ we associate a new map $g_{\hat \sigma}: \mathbb X^S\to \R$ by $g_{\hat \sigma} (x_1,\ldots,x_n):= g((x_{i(s)})_{s\in S})$. For example, if $ S= \{1,2,3\}$ and $\hat \sigma = ( \{1,3\}, \{2\})$, then $g_{\hat \sigma}( x_1,x_2) = g(x_1,x_2,x_1)$. Changing the order of the blocks in $\hat \sigma$ permutes the variables in $g_{\hat \sigma}$ but leaves the integral $\int_{\mathbb X^n} g_{\hat \sigma} \dd \lambda_z^ n$ unchanged, by a slight abuse of notation we write $\int_{\mathbb X^n} g_\sigma \dd \lambda_z^n$ instead. 
 
In the following proposition the $m$-fold tensor $u\otimes\cdots \otimes u$ is the function $(x_1,\ldots,x_{2m}) \mapsto u(x_1,x_2) u(x_3,x_4)\cdots u(x_{2m-1},x_{2m})$, and $||\sigma||$ is the number of blocks of a partition $\sigma$. 
 
\begin{prop}\label{prop:cumulants-diagrams}
	Let $S_m=\{1,\ldots,2m\}$, and $\pi_m =\{ \{1,2\},\{3,4\},\ldots,\{2m-1,2m\}\}$.
	Under the assumptions of Proposition~\ref{prop:stochmoment}, the $m$-th cumulant of $\int_{\mathbb X^ 2} u \dd \eta^{(2)}$ is given by 
	\bes
		\kappa_m\Bigl( \int_{\mathbb X^2} u \dd \eta^ {(2)}\Bigr)  = \sum_{\substack{\sigma\in \mathcal P(S_m):\\ (\pi_m,\sigma)\, \text{non-flat, connected}}} \int_{\mathbb X^{||\sigma||}} (u\otimes \cdots \otimes u)_\sigma(x_1,\ldots,x_{||\sigma||})\dd \lambda_z^n(\vect x)
	\ees
	with absolutely convergent integrals. 
\end{prop}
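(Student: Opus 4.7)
The plan is to translate Proposition~\ref{prop:stochmoment} into the partition-pair language via the bijection sketched before the statement. By the scaling $\kappa_m(X) = 2^m \kappa_m(X/2)$ applied to $X = \int_{\mathbb{X}^2} u\, \dd \eta^{(2)}$ together with Proposition~\ref{prop:stochmoment},
\bes
\kappa_m\Bigl(\int_{\mathbb X^2} u\, \dd \eta^{(2)}\Bigr) = 2^m \sum_{n=2}^{2m} \frac{1}{n!} \int_{\mathbb X^n} \sum_{\gamma \in \mathcal M_c([n],[m])} \prod_{1 \le i < j \le n} u(x_i,x_j)^{m_{ij}(\gamma)}\, \dd \lambda_z^n(\vect x),
\ees
so it will suffice to exhibit a $2^m$-to-one correspondence between \emph{ordered} non-flat connected pairs $(\pi_m, \sigma)$ and labelled multigraphs in $\mathcal M_c([n],[m])$, and to verify that corresponding integrands agree.

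First I would observe that an ordered set partition of $S_m = \{1,\ldots,2m\}$ into $n$ blocks is the same data as a surjection $f : S_m \twoheadrightarrow [n]$, and non-flatness of the underlying pair $(\pi_m,\sigma)$ translates to the requirement $f(2a-1) \neq f(2a)$ for every $a \in [m]$. Setting $\gamma_f(a) := \{f(2a-1), f(2a)\}$ produces a spanning multigraph on $[n]$ with edges labelled by $[m]$, and unpacking Definition~\ref{def:nonflat} shows that $(\pi_m, \sigma)$ is connected precisely when $\gamma_f$ is connected as a multigraph. Conversely, for any fixed $\gamma \in \mathcal M_c([n],[m])$, a preimage $f$ is obtained by picking, for each edge $a$, which of the two endpoints of $\gamma(a)$ is the image of $2a-1$; this yields exactly $2^m$ surjections $f$ mapping to $\gamma$, all of which are automatically surjective and non-flat, and produce connected multigraphs.

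Next I would match integrands. For any ordering of the blocks of $\sigma$, corresponding to a surjection $f$ producing the multigraph $\gamma_f$, the symmetry $u(x,y) = u(y,x)$ gives
\bes
(u \otimes \cdots \otimes u)_{\sigma}(x_1,\ldots,x_n) = \prod_{a=1}^m u(x_{f(2a-1)}, x_{f(2a)}) = \prod_{1 \le i < j \le n} u(x_i, x_j)^{m_{ij}(\gamma_f)}.
\ees
Summing the resulting integrals over ordered non-flat connected partitions and then dividing by the $n!$ orderings of each unordered $\sigma$ with $\|\sigma\|=n$, the factor $2^m$ obtained from the $2^m$ fibers of $f \mapsto \gamma_f$ cancels the $2^m$ scaling in the first display, producing exactly the sum claimed in the proposition. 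Absolute convergence of each integral follows from hypothesis~\eqref{eq:smsuff} applied to the multigraph $\gamma_f$ associated to any chosen ordering.

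The main obstacle I anticipate is not conceptual but bookkeeping: keeping ordered vs.\ unordered partitions straight, tracking the interplay between the factor $2^m$, the $n!$ coming from forgetting the block ordering, and the symmetry $u(x,y)=u(y,x)$ that makes the integrand independent of the chosen ordering. Once the correspondence $f \mapsto \gamma_f$ is pinned down as above, the verification that connectedness and non-flatness correspond on the two sides, and that preimage fibers have size $2^m$, is routine, and the remaining arithmetic collapses into the displayed formula.
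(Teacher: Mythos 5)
Your proposal is correct and follows essentially the same route as the paper's proof: the scaling identity $\kappa_m(X)=2^m\kappa_m(\tfrac12 X)$ applied to Proposition~\ref{prop:stochmoment}, a $2^m$-to-one correspondence between ordered non-flat connected pairs $(\pi_m,\sigma)$ and edge-labelled connected multigraphs, and the cancellation of the $n!$ block orderings against the $1/n!$. Your phrasing of the correspondence via surjections $f:S_m\twoheadrightarrow[n]$ with $\gamma_f(a)=\{f(2a-1),f(2a)\}$ is just the inverse view of the paper's order-preserving bijections on the ``dedoubled'' vertex set $S^\gamma$, so the two arguments coincide in substance.
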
 

The proposition is proven in Section~\ref{sec:stochmoment}. It is  deduced from Proposition~\ref{prop:stochmoment} and the correspondence between connected multigraphs and non-flat connected pairs of partitions. Some combinatorial subtleties arise because the correspondence is not exactly one-to-one; this is also the reason why Proposition~\ref{prop:stochmoment} looks at $\frac12 \int_{\mathbb X^2} u \dd \eta^{(2)}$ while Proposition~\ref{prop:cumulants-diagrams} deals with $\int_{\mathbb X^2} u \dd \eta^{(2)}$.  

Proposition~\ref{prop:cumulants-diagrams} should be contrasted with a similar expression for the cumulants of the random variable 
\bes
		I_2(u) = \int_{\mathbb X^2} u \dd \eta^{(2)} -  \int_{\mathbb X^ 2} u\, \dd ( \eta\otimes \lambda_z ) -   \int_{\mathbb X^2} u\, \dd (\lambda_z \otimes \eta) + \int_{\mathbb X^2} u \dd (\lambda_z\otimes \lambda_z).
	\ees
	The cumulants of $I_2(u)$ are given by sums 
 over non-flat, connected diagrams $(\pi_m,\sigma)$ such that every block of $\sigma$ has cardinality at least $2$, see~\cite{peccati-taqqu2011book,last-penrose-schulte-thale2014moments}. This corresponds to connected multigraphs for which every vertex $i\in V$ has degree $\sum_{j\in V} m_{ij}(\gamma) \geq 2$. 

We leave open whether the known formulas for the cumulants of $I_2(u)$ have a simple explanation in terms of cluster expansions. Regardless of the answer, the considerations in this section show that seemingly different types of expansions can be put on a common footing, which might yield interesting insights in the future. 

\section{Kirkwood-Salsburg equation. Uniqueness} \label{sec:ks}

 Here we prove Theorem~\ref{thm:uniqueness} on the uniqueness of Gibbs measures. Our proof follows a standard strategy~\cite[Chapter 4.2]{ruelle1969book}: We start from a set of integral equations satisfied by the correlation functions, reformulate these equations as a fixed point problem in some Banach space, and show that the fixed point problem involves a contraction. A minor novelty of our proof lies in our choice of norms as weighted supremum norms with weights that incorporate some information on interactions.

\begin{lemma} \label{lem:ks}
	Assume $v\geq 0$, $\lambda^2$-almost everywhere, and $\int_{\mathbb X} |f(x,y)| \dd \lambda_z(x)<\infty$ for $\lambda_z$-almost all $x\in \mathbb X$. 
	Let $\mathsf P\in \mathscr G(z)$. The correlation functions $\rho_n$ of $\mathsf P$ satisfy the Kirkwood-Salsburg equations
	\begin{multline} \label{eq:ks} \tag{KS}
		\rho_{n+1}(x_0,\ldots,x_n) = z(x_0) \prod_{i=1}^n (1+ f(x_0,x_i)) \\
			\times \left( \rho_n(x_1,\ldots,x_n) + \sum_{k=1}^\infty \frac{1}{k!} \int_{\mathbb X^k} \prod_{j=1}^k f(x_0,y_i)\rho_{n+k}(x_1,\ldots,x_n,y_1,\ldots,y_k) \dd \lambda^k(\vect y)\right) 
	\end{multline} 
	for all $n\in \N_0$ and $\lambda^{n+1}$-almost all $(x_0,\ldots,x_{n})\in \mathbb X^{n+1}$, with the convention $\rho_0 =1$ and $\prod_{i=1}^0 (1+ f(x_0,x_i)) =1$. 
\end{lemma}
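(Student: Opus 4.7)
The plan is to start from the explicit representation of correlation functions in Lemma~\ref{lem:correp} and manipulate it into Kirkwood–Salsburg form. First I would write
\[
	\rho_{n+1}(x_0,x_1,\ldots,x_n) = z(x_0)\prod_{i=1}^n z(x_i)\, \e^{-H_{n+1}(x_0,\ldots,x_n)} \int_{\mathcal N} \e^{-W(x_0;\eta)-\sum_{i=1}^n W(x_i;\eta)} \dd \mathsf P(\eta),
\]
split $H_{n+1}(x_0,\ldots,x_n) = H_n(x_1,\ldots,x_n) + \sum_{i=1}^n v(x_0,x_i)$, and pull out the factor $\prod_{i=1}^n (1+f(x_0,x_i))$. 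The remaining task is to expand the exponential $\e^{-W(x_0;\eta)} = \prod_{y \in \eta}(1+f(x_0,y))$ in the integrand as a power-series in the factorial measures of $\eta$:
\[
	\e^{-W(x_0;\eta)} = 1 + \sum_{k=1}^\infty \frac{1}{k!}\int_{\mathbb X^k} \prod_{j=1}^k f(x_0,y_j)\, \dd \eta^{(k)}(\vect y).
\]
Because $v\geq 0$ one has $|f|\leq 1$, and the majorant $\prod_{y\in \eta}(1+|f(x_0,y)|)$ is finite $\mathsf P$-almost surely under the hypothesis $\int_{\mathbb X}|f(x_0,\cdot)|\dd \lambda_z<\infty$, which together with the definition of factorial measures gives absolute convergence $\mathsf P$-almost surely.

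Next I would justify exchanging this sum with the integral against $\mathsf P$. Bounding the $k$-th term in absolute value and using the elementary estimate $\rho_k\leq \prod z$ from~\eqref{eq:nonnegbound} yields
\[
	\frac{1}{k!}\int_{\mathcal N}\int_{\mathbb X^k} \prod_j |f(x_0,y_j)|\, \dd \eta^{(k)}(\vect y)\dd \mathsf P(\eta) \leq \frac{1}{k!}\Bigl(\int_\mathbb X |f(x_0,y)|\dd \lambda_z(y)\Bigr)^k,
\]
so the series of absolute values sums to $\exp\bigl(\int |f(x_0,\cdot)|\dd\lambda_z\bigr)<\infty$ by assumption, and Fubini applies.

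The final step is to recognize each resulting summand as an integral of $\rho_{n+k}$. I would apply the GNZ equation~\eqref{eq:gnz} iteratively (or equivalently the Campbell-type identity underlying Lemma~\ref{lem:correp}): for a deterministic weight $h(\vect y)$,
\[
	\int_{\mathcal N} \Bigl(\int_{\mathbb X^k} h(\vect y)\, \dd \eta^{(k)}(\vect y)\Bigr) \e^{-\sum_{i=1}^n W(x_i;\eta)} \dd \mathsf P(\eta) = \frac{\e^{H_n(x_1,\ldots,x_n)}}{\prod_{i=1}^n z(x_i)}\int_{\mathbb X^k} h(\vect y)\, \rho_{n+k}(x_1,\ldots,x_n,\vect y)\dd \lambda^k(\vect y),
\]
which is obtained by noting $W(x_i;\eta+\sum_j\delta_{y_j})=W(x_i;\eta)+\sum_j v(x_i,y_j)$ during the $k$-fold iteration and collecting factors with the help of~\eqref{eq:correp}. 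Plugging this in with $h(\vect y) = \prod_j f(x_0,y_j)$ and combining with the prefactors produces~\eqref{eq:ks}. The main obstacle I anticipate is the Fubini bookkeeping—checking that $|f|\leq 1$ together with the stated integrability of $|f(x_0,\cdot)|$ against $\lambda_z$ is exactly what licenses both the pointwise expansion $\mathsf P$-a.s.\ and the exchange of sum and $\mathsf P$-integration; the iterated GNZ identity is then a formal manipulation driven by the structure of $H_{n+k}$.
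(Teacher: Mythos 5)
Your proposal is correct and follows essentially the same route as the paper's own proof in Appendix~\ref{app:knownstuff}: start from the representation of Lemma~\ref{lem:correp}, split off the factor $\prod_{i=1}^n(1+f(x_0,x_i))$, expand $\e^{-W(x_0;\eta)}$ in factorial measures, justify the exchange of sum and $\mathsf P$-integration via $\rho_k\leq\prod z$ and the integrability of $f(x_0,\cdot)$, and identify each term through the iterated (multivariate) GNZ identity. The identity you state for $\int_{\mathcal N}(\int h\,\dd\eta^{(k)})\e^{-\sum_i W(x_i;\eta)}\dd\mathsf P$ is exactly what the paper's equation~\eqref{eq:mgnz} delivers, so no gap remains.
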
 

\noindent The absolute convergence of the right-hand side of~\eqref{eq:ks} is ensured by the condition~\eqref{eq:suff1} and the bound~\eqref{eq:nonnegbound}.  
The lemma is a consequence of the well-known equivalence of the GNZ and the Dobrushin-Lanford-Ruelle (DLR) conditions~\cite{nguyen-zessin79} on the one hand, and the DLR conditions and the Kirkwood-Salsburg equation on the other hand~\cite{ruelle1970superstable}, see also~\cite{kuna1999dissertation,kuna-kondratiev2003}. For the reader's convenience we present a short self-contained proof of the implication~\eqref{eq:gnz}$\Rightarrow$\eqref{eq:ks} in Appendix~\ref{app:knownstuff}.

The Kirkwood-Salsburg equations can be rephrased as a fixed point equation in some suitable Banach space. 	Let $E_0$ be the  space of sequences $\vect \rho = (\rho_n)_{n\in\N}$ of real-valued measurable functions $\rho_n:\mathbb X^n\to \R$  such that 
\bes
	|\rho_n(x_1,\ldots,x_n)| \leq C_{\vect\rho} \e^{tn}\prod_{1\leq i <j\leq n} (1+ f(x_i,x_j)) \prod_{i=1}^n z(x_i) \e^{a(x_i)} 
\ees
for some $C_{\vect \rho} \geq 0$, all $n\in \N$, and $\lambda^n$-almost all $(x_1,\ldots,x_n)\in \mathbb X^n$.
Let $||\vect \rho||_0$ be the smallest constant $C_{\vect \rho}$. The quotient $E$ of $E_0$ with the null space $\{ \vect \rho: \, ||\vect \rho||_0=0\}$, together with the norm $|| [\vect \rho] ||:= ||\vect \rho||_0$, is a Banach space; by a slight abuse of notation we write $\vect \rho$ instead of $[\vect \rho]$. 
 	For $\vect \rho \in E$, set
	\be \label{eq:kdef1}
		(K_z \vect \rho)_1(x_0) = z(x_0) \sum_{k=1}^\infty \frac{1}{k!} \int_{\mathbb X^k} \prod_{j=1}^k f(x_0,y_j)\rho_{k}(y_1,\ldots,y_k) \dd \lambda^k(\vect y)
	\ee
	and if $n\in \N$
	\begin{multline}\label{eq:kdef2}
		(K_z \vect \rho)_{n+1}(x_0,\ldots,x_n)
			= z(x_0) \prod_{i=1}^n (1+ f(x_0,x_i)) 
			\times \Bigl( \rho_n(x_1,\ldots,x_n)  \\ + \sum_{k=1}^\infty  \frac{1}{k!} \int_{\mathbb X^k} \prod_{j=1}^k  f(x_0,y_j)\rho_{n+k}(x_1,\ldots,x_n,y_1,\ldots,y_k) \dd \lambda^k(\vect y)\Bigr). 
	\end{multline} 
Let $\vect e_z =(e_{z,n})_{n\in\N} \in E$ be the sequence given by $e_{z,1}(x_1)= z(x_1)$ on $	\mathbb X$ and $e_{z,n} =0$ for $n\geq 2$. The Kirkwood-Salsburg equation is rewritten as 
\be
	\vect \rho = \vect e_z +  K_z\vect \rho.
\ee

\begin{lemma} \label{lem:contract}
	Under the assumptions of Theorem~\ref{thm:uniqueness}, the operator $K_z$ is a  bounded linear operator in $E$ with operator norm $||K_z||\leq \e^{-t}< 1$. 
\end{lemma}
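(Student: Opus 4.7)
The plan is to show directly that for every $\vect\rho\in E$ with $\|\vect\rho\|_0\le C$, one has $|(K_z\vect\rho)_{n+1}(x_0,\ldots,x_n)|\le Ce^{-t}\cdot e^{t(n+1)}\prod_{0\le i<j\le n}(1+f(x_i,x_j))\prod_{i=0}^n z(x_i)e^{a(x_i)}$ for $\lambda^{n+1}$-a.e.\ $(x_0,\ldots,x_n)$, which by the definition of the norm immediately gives $\|K_z\vect\rho\|_0\le e^{-t}\|\vect\rho\|_0$. Linearity of $K_z$ is obvious from the formulas \eqref{eq:kdef1}--\eqref{eq:kdef2}, so the whole content is the pointwise estimate. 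Note that by $f\le 0$ the factors $1+f(x,y)$ lie in $[0,1]$, so all the products are non-negative and can be kept or thrown away freely when passing to absolute values.

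First I would handle the case $n=0$. Using the norm bound $|\rho_k(y_1,\ldots,y_k)|\le Ce^{tk}\prod_{i<j}(1+f(y_i,y_j))\prod_j z(y_j)e^{a(y_j)}$ inside \eqref{eq:kdef1} and applying hypothesis~\eqref{eq:suff1} verbatim gives $|(K_z\vect\rho)_1(x_0)|\le Cz(x_0)(e^{a(x_0)}-1)\le Cz(x_0)e^{a(x_0)}$, which is the desired bound with contraction factor $e^{-t}$ (since the $n=1$ target bound carries an extra $e^t$).

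For $n\ge 1$ I would split $(K_z\vect\rho)_{n+1}(x_0,\ldots,x_n)$ into the ``diagonal'' term $z(x_0)\prod_{i=1}^n(1+f(x_0,x_i))\rho_n(x_1,\ldots,x_n)$ and the integral series over $k\ge 1$. For the diagonal term, apply the bound on $\rho_n$ and merge $\prod_{i=1}^n(1+f(x_0,x_i))\cdot\prod_{1\le i<j\le n}(1+f(x_i,x_j))=\prod_{0\le i<j\le n}(1+f(x_i,x_j))$, which produces $Ce^{tn}\,z(x_0)\prod_{0\le i<j\le n}(1+f(x_i,x_j))\prod_{i=1}^n z(x_i)e^{a(x_i)}$; note the factor $e^{a(x_0)}$ is missing here. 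For the series term, apply the bound on $|\rho_{n+k}|$, and then exploit $f\le 0$ to drop the cross factors $\prod_{i\le n,j\le k}(1+f(x_i,y_j))\le 1$. What is left inside the integral over $\vect y$ is exactly the integrand of \eqref{eq:suff1}, so summing over $k\ge 1$ yields at most $Ce^{tn}z(x_0)\prod_{i=1}^n(1+f(x_0,x_i))\prod_{1\le i<j\le n}(1+f(x_i,x_j))\prod_{i=1}^n z(x_i)e^{a(x_i)}\cdot(e^{a(x_0)}-1)$.

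Adding the diagonal and series contributions, the prefactors $1$ and $(e^{a(x_0)}-1)$ combine to $e^{a(x_0)}$, which exactly supplies the missing weight at $x_0$. The resulting bound is $Ce^{tn}\prod_{0\le i<j\le n}(1+f(x_i,x_j))\prod_{i=0}^n z(x_i)e^{a(x_i)}$, and comparing with the target $C'e^{t(n+1)}\cdots$ forces $C'\ge Ce^{-t}$, hence $\|K_z\|\le e^{-t}$. The one delicate point — the only place where the structure of the estimate is really used — is step three: we must be able to keep the product $\prod_{i\le n,j\le k}(1+f(x_i,y_j))$ uniformly bounded by $1$ so that the double integral over $(x,y)$ cross-terms decouples and hypothesis~\eqref{eq:suff1} applies to the $\vect y$-integral alone. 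This is precisely what the sign assumption $f\le 0$ (i.e., $v\ge 0$) buys us, and it is the main reason the proof is restricted to non-negative pair interactions.
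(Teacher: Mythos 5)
Your proof is correct and follows essentially the same route as the paper: bound each term of $(K_z\vect\rho)_{n+1}$ using the definition of the norm, discard the cross factors $1+f(x_i,y_j)\le 1$, apply condition~\eqref{eq:suff1} to the $\vect y$-integral so that the diagonal term's $1$ and the series' $\e^{a(x_0)}-1$ combine into $\e^{a(x_0)}$, and read off the contraction factor $\e^{-t}$ from the mismatch between $\e^{tn}$ and the target weight $\e^{t(n+1)}$. The treatment of the $n=0$ case and the identification of $f\le 0$ as the point where non-negativity of the interaction enters also match the paper's argument.
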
 

\begin{proof} 
		Suppose that $\vect \rho \in E$. Then we have, for $n\in \N$, 
	\be
		\prod_{i=1}^n (1+ f(x_0,x_i)) |\rho_n(x_1,\ldots,x_n)| 
			\leq ||\vect \rho|| \e^{tn}
 \prod_{0\leq i< j \leq n}(1+ f(x_i,x_j)) \prod_{i=1}^n z(x_i) \e^{ a(x_i)} 
	\ee
	and for $n\in \N$, $k\in \N_0$, 
	\begin{multline}
		\prod_{i=1}^n (1+ f(x_0,x_i))\prod_{i=1}^k |f(x_0,y_i)|\, |\rho_{n+k}(x_1,\ldots,x_n,y_1,\ldots,y_k)|  \\
		\leq ||\vect \rho|| \e^{t(n+k)}  \prod_{0\leq i < j \leq n} (1+ f(x_i,x_j))  \prod_{i=1}^k |f(x_0,y_i)| \prod_{1\leq i < j \leq k}(1+ f(y_i,y_j)) \\
			\times \prod_{i=1}^n z(x_i)\e^{a(x_i)} \prod_{j=1}^kz(y_j) \e^{a(y_j)}.
	\end{multline} 
	Here the mixed terms $1+f(x_i,y_j)$ that arise from bounding $\rho_{n+k}(x_1,\ldots,x_n,y_1,\ldots,y_k)$ have been bounded by $1$. 
	In the Kirkwood-Salsburg equation we use the triangle inequality and plug in the previous estimates; this yields 
	\begin{multline} 
		|(K_z\vect \rho)_{n+1}(x_0,\ldots,x_n)| \leq ||\vect \rho||\, z(x_0)\, \prod_{0\leq i < j \leq n} (1+ f(x_i,x_j)) \e^{tn}\prod_{i=1}^n z(x_i) \e^{a(x_i)} \\
		\times \left( 1 + \sum_{k=1}^\infty \frac{\e^{tk}}{k!} \int_{\mathbb X^k} \prod_{i=1}^k |f(x_0,y_i)| \prod_{1\leq i < j \leq k}(1+ f(y_i,y_j)) \e^{\sum_{j=1}^k a(y_j)} \dd \lambda^k(\vect y)\right) \\
		\leq \e^{-t } ||\vect \rho||\, z(x_0)\,  \prod_{0\leq i < j \leq n} (1+ f(x_i,x_j))\prod_{i=1}^n z(x_i)  \e^{\sum_{i=0}^n a(x_i)}\e^{t (n+1)} .
	\end{multline} 
	For $n=0$, 
	\be
		|(K_z \vect \rho)_1(x_0)|\leq  \e^{-t} ||\vect \rho|| (\e^{a(x_0)}-1)\leq \e^{-t} ||\vect \rho|| \e^{t+a(x_0)}.
	\ee
	It follows that $K_z\vect \rho \in E$ and $||K_z\vect \rho||\leq\e^{-t} ||\vect \rho||$. 
\end{proof} 

\begin{proof}[Proof of Theorem~\ref{thm:uniqueness}]
	The existence of a Gibbs measure follows from Theorem~\ref{thm:existence} in Appendix~\ref{app:existence}. For the uniqueness, we start from Lemma~\ref{lem:correp} and note that the sequence of $n$-point functions of a Gibbs measure $\mathsf P\in \mathscr G(z)$ is in $E$. In view of $||K_z||\leq \e^{-t}<1$, the operator $(\mathrm{id} -   K_z)$ is invertible with bounded inverse given by a Neumann series. Therefore the vector of correlation functions $\vect \rho = (\rho_n)_{n\in \N}$, is uniquely determined by the Kirkwood-Salsburg equations and is given by 
	\be \label{eq:neumann} 
		\vect \rho = (\mathrm{id} - K_z)^{-1} \vect e_z = \vect e_z+ \sum_{\ell=1}^\infty K_z^\ell \vect e_z.
	\ee	
	By Lemma~\ref{lem:correp}, we have $\rho_n(x_1,\ldots,x_n)\leq \prod_{j=1}^n z(x_j)$ and 
	\bes
		 \mathsf E\Bigl[ \eta(B)\bigl( \eta(B) -1\bigr) \cdots \bigl( \eta(B)-n+1\bigr)\Bigr]=  \int_{B^n} \rho_n \dd\lambda^n \leq \bigl( \int_B z\dd\lambda\bigr)^n = \lambda_z(B)^n.
	\ees
	for all $B\in \mathcal X_\mathsf b$ and $n \in \N_0$. As a consequence the correlation functions determine the measure $\mathsf P$ uniquely~\cite[Proposition 4.12]{last-penrose2017book} and we find $\#\mathscr G(z) = 1$.
\end{proof} 

\section{Weighted graphs. Expansion of correlation functions} \label{sec:correlations} 

In the previous section we have proven that condition~\eqref{eq:suff1} guarantees the uniqueness of the Gibbs measure $\mathsf P$. We proceed with the proof of Theorem~\ref{thm:correlations} on the expansion of correlation functions. As noted earlier, the vector of correlation functions $\vect \rho = (\rho_k)_{k\in \N}$ is the unique solution of a fixed point equation $\vect \rho = \vect e_z + K_z \vect e_z$ and has the series representation~\eqref{eq:neumann}. It remains to compute the powers $K_z^n \vect e_z$, i.e., to show that they are indeed given by integrals and sums involving multirooted graphs. This is done by induction, noting that 
the partial sums of the series are in fact Picard iterates of the fixed point equation with initial value $\vect{e}_z$. Indeed, 
\bes
	\sum_{n =0}^{N+1} K_z^n \vect e_z  = \vect e_z + K_z \Bigl( \sum_{n =0}^{N} K_z^n \vect e_z\Bigr). 
\ees
The combinatorial counterpart to the partial sums of the Neumann series~\eqref{eq:neumann} are sums over graphs truncated at some maximum number $N$ of vertices. 
Remember the functions $\psi_{k,n+k}$ from~\eqref{eq:psidef} and the multirooted graphs $\mathcal D_{k,n}$ with $k$ roots and $n\geq k$ vertices. Define
\begin{multline} \label{eq:sndef}
	\bigl( \vect S_N(z) \bigr)_k(x_1,\ldots,x_k)  := z(x_1)\cdots z(x_k) \\
		\times \sum_{n=k}^N \frac{1}{(n-k)!} \int_{\mathbb X^{n-k}} \psi_{k,n}(x_1,\ldots,x_k,y_{k+1},\ldots,y_n) \dd \lambda_z^{n-k}(\vect y)
\end{multline} 
if $1\leq k \leq N$ and $(\vect S_N(z))_k(x_1,\ldots,x_k):=0$ if $k\geq N+1$. The summand for $k=n$ is understood as $\psi_{k,k}(x_1,\ldots,x_k)$. In order to prove that $\vect S_N(z)$ is equal to the partial sum $\sum_{n=1}^N K_z^n \vect e_z$, we show that $\vect S_1 (z) = \vect e_z$ and that $\vect S_{N+1}(z)$ is obtained from $\vect S_N(z)$ by a Picard iteration, see Proposition~\ref{prop:picard} below. The proof builds on recursive properties of multirooted graph with respect to removal of a root and as such generalizes the recursive proof with singly rooted graphs from~\cite{ueltschi2004cluster}, see also~\cite[Chapter 5.4]{friedli-velenik2017book}. 

In addition, we prove that if the condition~\eqref{eq:suff1} holds true with $t=0$, then the right-hand side of~\eqref{eq:sndef} converges pointwise as $N\to \infty$, and we provide bounds. This ensures convergence of expansions even if $K_z$ has operator norm equal to $1$. In that case the limit corresponds to a fixed point of $\vect \rho = \vect e_z + K_z \vect \rho$ though we no longer know whether the solution is unique. This part of the proof is similar to the inductive treatment 
of the Kirkwood-Salsburg equation in~\cite{bissacot-fernandez-procacci2010}.

Define $\vect{\tilde S}_N$ in a similar way as $\vect S_N$ but with additional absolute values, i.e., 
\begin{multline} \label{eq:sntildedef}
	\bigl( \vect{\tilde S}_N(z) \bigr)_k(x_1,\ldots,x_k)  := z(x_1)\cdots z(x_k) \\
		\times \sum_{n=k}^N \frac{1}{(n-k)!} \int_{\mathbb X^{n-k}} \bigl|\psi_{k,n}(x_1,\ldots,x_k,y_{k+1},\ldots,y_n)\bigr| \dd \lambda_z^{n-k}(\vect y)
\end{multline} 
if $1\leq k \leq N$ and $(\vect{\tilde S}_N(z))_k(x_1,\ldots,x_k):=0$ if $k\geq N+1$. Clearly $\vect S_1 = \vect{\tilde S}_1=\vect e_z$.

\begin{prop}\label{prop:picard}
Assume $v\geq 0$, $\lambda_z(B)< \infty$ for all $B\in \mathcal X_\mathsf b$, and suppose that condition~\eqref{eq:suff1} holds true with $t=0$. Then we have 
\be \label{eq:snpicard}
	\vect S_1(z) = \vect e_z,\quad \vect S_{N}(z) = \vect e_z + K_z \vect S_{N-1}(z) \quad (N\geq 2)
\ee
with 
\begin{multline}\label{eq:picard-bound}
	|(\vect S_N(z))_k(x_1,\ldots,x_k)| \leq (\vect{\tilde S}_N(z))_k(x_1,\ldots,x_k) 
		\leq \prod_{1\leq i < j \leq k} (1+ f(x_i,x_j)) \prod_{j=1}^k z(x_j) \e^{a(x_j)}
\end{multline}
for all $k,N\in \N$ and $\lambda^k$-almost all $(x_1,\ldots,x_k)\in \mathbb X^k$. 
\end{prop}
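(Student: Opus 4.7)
The plan is to verify the initialisation, the Picard iteration, and the pointwise bound separately; the bound is obtained by induction on $N$ using the iteration. For the initialisation, note that $\mathcal D_{k,k}$ consists of the unique edgeless graph on $k$ root vertices, so $\psi_{k,k}\equiv 1$; consequently $(\vect S_1(z))_1(x_1) = z(x_1)$ and $(\vect S_1(z))_k\equiv 0$ for $k\geq 2$, which gives $\vect S_1 = \vect e_z$, and for $N=1$ the bound~\eqref{eq:picard-bound} reduces to $z(x_1)\leq z(x_1)\e^{a(x_1)}$, which holds because $a\geq 0$.

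The core combinatorial step for~\eqref{eq:snpicard} is a root-splitting bijection on $\mathcal D_{k+1, n+1}$. I distinguish one root, say vertex $0$ carrying the label $x_0$, and for each $G\in\mathcal D_{k+1, n+1}$ I record $T=\{i\in\{1,\ldots,k\}:\{0,i\}\in E(G)\}$ and $S=\{j\in\{k+1,\ldots,n\}:\{0,j\}\in E(G)\}$. Deleting vertex $0$ and promoting $S$ to roots yields $G'\in\mathcal D_{k+|S|, n}$; the correspondence $G\leftrightarrow (T,S,G')$ is a bijection once one checks that the multirootedness of $G$ is equivalent to that of $G'$ (any path in $G$ from a non-root to vertex $0$ must enter $0$ through a neighbour, i.e.\ through a vertex in $T\cup S$, which is a root of $G'$). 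The weight factorises as
\bes
w(G; x_0,\ldots,x_n) = \prod_{i\in T} f(x_0,x_i)\prod_{j\in S} f(x_0,x_j)\, w(G'; x_1,\ldots,x_n),
\ees
so summing over $T$ produces $\prod_{i=1}^k(1+f(x_0,x_i))$, while summing over $S$ of fixed size $m$ combined with the symmetry of $\psi_{k+m,n}$ in its non-root arguments converts $\binom{n-k}{m}\cdot\tfrac{1}{(n-k)!}$ into $\tfrac{1}{m!(n-k-m)!}$. Multiplying by $z(x_0)\prod_{j=1}^k z(x_j)$, summing over $n$ up to $N-1$ and re-indexing via $p=(n-k)-m$ separates the integration variables into two disjoint groups; the inner $p$-sum then collapses to $(\vect S_{N-1})_{k+m}$. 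The $m=0$ contribution produces either $z(x_0)\prod_i(1+f(x_0,x_i))(\vect S_{N-1})_k$ for $k\geq 1$, or (for $k=0$, from the singleton graph $\psi_{1,1}=1$) the $\vect e_z$ term; the $m\geq 1$ contributions assemble into $(K_z\vect S_{N-1})_{k+1}$.

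For~\eqref{eq:picard-bound} I would replay the same root-splitting with $|f|$ in place of $f$ (legitimate because $v\geq 0$ forces $f\leq 0$) to obtain the companion recursion $\vect{\tilde S}_N = \vect e_z + \tilde K_z \vect{\tilde S}_{N-1}$, where $\tilde K_z$ is built from~\eqref{eq:kdef1}--\eqref{eq:kdef2} with $|f(x_0,y_j)|$ in place of $f(x_0,y_j)$; the pointwise inequality $|(\vect S_N)_k|\leq (\vect{\tilde S}_N)_k$ is then immediate. Induction on $N$ proceeds by plugging the inductive bound on $\vect{\tilde S}_{N-1}$ into the $\tilde K_z$ recursion and bounding the ``mixed'' factors $\prod_{i\leq k,\, j\leq m}(1+f(x_i,y_j))\leq 1$ to separate the $x$- and $y$-dependent parts. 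The $y$-integral is then precisely the left-hand side of~\eqref{eq:suff1} with $t=0$ (up to the constant $1$ from the $m=0$ contribution), hence bounded by $\e^{a(x_0)}$; the $x$-factor combines with $z(x_0)\e^{a(x_0)}$ to produce exactly $\prod_{0\leq i<j\leq k}(1+f(x_i,x_j))\prod_{j=0}^k z(x_j)\e^{a(x_j)}$, closing the induction. The main obstacle is the bookkeeping in the root-splitting step: verifying that promotion-plus-deletion is genuinely bijective (the connectivity equivalence must go both ways) and that the symmetry factors $\tfrac{1}{(n-k)!}$, $\binom{n-k}{m}$, $\tfrac{1}{m!(n-k-m)!}$ combine correctly after integrating over indistinguishable non-root labels.
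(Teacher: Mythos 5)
Your proof follows essentially the same route as the paper's: the root-splitting bijection you describe (delete the distinguished root, record its root- and non-root-neighbours, promote the latter to roots) is exactly Lemma~\ref{lem:recursion}, the symmetrization turning $\binom{n-k}{m}\cdot\frac{1}{(n-k)!}$ into $\frac{1}{m!(n-k-m)!}$ is Lemma~\ref{lem:recursion2}, and the induction on $N$ using the mixed-factor bound $\prod(1+f(x_i,y_j))\le 1$ together with condition~\eqref{eq:suff1} at $t=0$ is precisely the paper's argument. Two immaterial slips: $\mathcal D_{k,k}=\mathcal G_k$ because the connectivity constraint is vacuous when every vertex is a root, so $\psi_{k,k}=\prod_{1\le i<j\le k}(1+f(x_i,x_j))$ rather than $1$ --- the initialisation only needs $\psi_{1,1}=1$, and $(\vect S_1(z))_k=0$ for $k\ge 2$ holds by the definitional convention $k\ge N+1$, not by any property of $\psi_{k,k}$; and since $\vect{\tilde S}_N$ is built from $|\psi_{k,n}|$ (the absolute value of a \emph{signed} sum over graphs), the companion recursion is only the inequality $\vect{\tilde S}_N\le\vect e_z+\tilde K_z\vect{\tilde S}_{N-1}$, which is the direction your induction uses anyway.
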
 

\noindent The equality~\eqref{eq:snpicard} for $\vect S_N$ is complemented by an inequality for $\vect {\tilde S}_N$: define the operator $\tilde K_z$ just as $K_z$ but with $f(x_0,y_j)$ replaced with $|f(x_0,y_j)|$.
Then $\vect{\tilde S}_1(z) = \vect e_z$ and 
	\be \label{eq:tildepicard}
		\vect{\tilde S}_1(z) = \vect e_z,\quad \vect {\tilde S}_N(z) \leq \vect e_z + \tilde K_z \vect {\tilde S}_{N-1}(z)\quad (N\geq 2)
	\ee
where ``$\vect g\leq \vect h$'' refers to pointwise inequality $g_k\leq h_k$ of the components. The inequality~\eqref{eq:tildepicard} allows for an inductive proof of~\eqref{eq:picard-bound}. 

The proof of Proposition~\ref{prop:picard} builds on two lemmas which hold true for attractive interactions as well. It is convenient to extend the definitions $\mathcal D_{k,n}$ and $\psi_{k,n}$. Given two   finite non-empty sets $I,J$ with $I\subset J$, let $\mathcal D(I,J)$ as the set of connected graphs $G$ with vertex set $J$ such that every vertex $j\in J$ connects in $G$ to at least one vertex $i\in I$. Set
\be
	\psi\bigl(I,J;(x_j)_{j\in J}\bigr):= \sum_{G\in \mathcal D(I,J)} w\bigl(G;(x_j)_{j\in J}\bigr) \qquad \bigl((x_j)_{j\in J}\in \mathbb X^J \bigr).
\ee
Thus $\mathcal D_{k,n} = \mathcal D([k],[n])$ and $\psi_{k,n}(\cdot) = \psi([k],[n];\cdot)$. For $J$ a finite possibly empty set, define
\be
	\psi\bigl(\varnothing,J;(x_j)_{j\in J}\bigr) := 
		\begin{cases} 
			1, &\quad J=\varnothing,\\
			0, &\quad J \neq \varnothing. 
		\end{cases}
\ee
Notice that for all permutations $\sigma$ of $J$, 
\be \label{eq:symmetry} 
	\psi\bigl(I,J;(x_{\sigma(j)})_{j\in J}\bigr) = \psi\bigl(\sigma(I),J;(x_j)_{j\in J}\bigr).
\ee
In particular, $\psi (I, J;\cdot)$ is invariant with respect to permutations that act on $J\setminus I$ only. 

 The following set of equations is similar to~\cite[Lemma 6.2]{poghosyan-ueltschi2009}. In fact Lemma~\ref{lem:recursion} and~\cite[Lemma 6.2]{poghosyan-ueltschi2009} show that our functions $\psi(I,J;\cdot)$ are equal to the functions $g(I,J\setminus I)$ defined with Ruelle's algebraic approach in~\cite{poghosyan-ueltschi2009}. We may think of $\mathcal D(I,J)$ as a collection of multi-rooted graphs with root set $I$ and some connectivity constraints, then the lemma reflects a recursive combinatorial structure when removing a root. 

\begin{lemma} \label{lem:recursion}
	Let $I,J$ be two non-empty finite sets with $I\subset J\subset \N_0$. Fix $(x_j)_{j\in J}\in \mathbb X^J$. Let $\iota(I) = \min I$, $I' = I\setminus \{\iota (I)\}$, and $J'= J\setminus \{\iota (I)\}$. We have 
		$$
			\psi\bigl(I,J;(x_j)_{j\in J}\bigr) =\Biggl(\prod_{i\in I'} \bigl(1+f(x_{\iota(I)}, x_i)\bigr)\Biggr) \sum_{L\subset J\setminus I} \Bigl(\prod_{\ell\in L} f(x_ {\iota(I),\ell})\Bigr) \psi\bigl(I'\cup L, J';(x_j)_{j\in J'}\bigr).
		$$
\end{lemma}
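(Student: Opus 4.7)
My plan is to prove the identity by a root-removal bijection: every graph $G \in \mathcal{D}(I,J)$ is encoded by (i) the set $S \subset I'$ of root neighbors of $\iota(I)$, (ii) the set $L \subset J \setminus I$ of non-root neighbors of $\iota(I)$, and (iii) the subgraph $G'$ of $G$ induced on $J'$. The weight $w(G;\cdot)$ factorizes accordingly, and summing separately over the three pieces will recover the claimed formula.

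The core combinatorial step, which I expect to be the main obstacle, is to show that the connectivity condition $G \in \mathcal{D}(I,J)$ is equivalent — once $L$ is fixed — to $G' \in \mathcal{D}(I' \cup L, J')$ (with no constraint on $S \subset I'$). One direction is direct: if $G' \in \mathcal{D}(I' \cup L, J')$, then any $j \in J \setminus I$ has a path in $G'$ either to $I'$ (hence to $I$) or to some $\ell \in L$, which in $G$ extends by the edge $\ell$--$\iota(I)$ to a path reaching the root $\iota(I) \in I$. Conversely, suppose $G \in \mathcal{D}(I,J)$ and pick $j \in J \setminus (I \cup L)$. Choose a path in $G$ from $j$ to some $i \in I$; if this path avoids $\iota(I)$, it lies in $G'$ and ends in $I' \subset I' \cup L$. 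Otherwise, consider the first time the path enters $\iota(I)$: the immediately preceding vertex is a neighbor of $\iota(I)$ in $J'$, which must belong to $I' \cup L$ by the definition of $L$ and $I'$, and the portion of the path up to that vertex lies in $G'$. Either way, $j$ is connected in $G'$ to $I' \cup L$, so $G' \in \mathcal{D}(I' \cup L, J')$.

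Once this equivalence is established, the rest is bookkeeping. Split the edge set of any $G \in \mathcal{D}(I,J)$ into edges incident to $\iota(I)$ and edges not incident to $\iota(I)$. The weight splits as
\begin{equation*}
w(G;(x_j)_{j\in J}) = \Bigl( \prod_{i \in S} f(x_{\iota(I)},x_i) \Bigr) \Bigl( \prod_{\ell \in L} f(x_{\iota(I)},x_\ell) \Bigr)\, w(G';(x_j)_{j\in J'}).
\end{equation*}
Summing over $S \subset I'$ with the first product yields $\prod_{i\in I'}(1+f(x_{\iota(I)},x_i))$, which is independent of $L$ and $G'$ and can be pulled out front. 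Summing over $G' \in \mathcal{D}(I'\cup L, J')$ for each fixed $L$ gives $\psi(I'\cup L, J';(x_j)_{j\in J'})$, and the outer sum over $L \subset J \setminus I$ produces the claimed identity.

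Finally I would verify the degenerate cases: if $J = I = \{\iota(I)\}$, both sides equal $1$ (only $L = \varnothing$ contributes and $\psi(\varnothing,\varnothing;\cdot) = 1$ by convention); if $I = \{\iota(I)\}$ with $J \neq I$, then $I' = \varnothing$ and the term $L = \varnothing$ vanishes because $\psi(\varnothing, J';\cdot) = 0$ for $J' \neq \varnothing$, which correctly enforces that $\iota(I)$ must have at least one non-root neighbor for $G$ to be connected to $I$. The symmetry~\eqref{eq:symmetry} ensures that the statement is well-posed regardless of which index plays the role of $\iota(I)$, so choosing the minimum is purely notational.
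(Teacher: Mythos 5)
Your proof is correct and follows essentially the same route as the paper: remove the root $\iota(I)$, encode $G$ by the triple (root-neighbors $S\subset I'$, non-root neighbors $L$, remaining graph $G'$), observe that the constraint on $G$ becomes $G'\in\mathcal D(I'\cup L,J')$ with $S$ unconstrained, factorize the weight and resum. You actually spell out the connectivity equivalence (via the ``first entry into $\iota(I)$'' path argument) more explicitly than the paper does, and your handling of the degenerate cases $\#I=1$ matches the paper's.
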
 

\noindent In the sum $L=\varnothing$ is allowed. Products over empty sets are equal to $1$. Lemma~\ref{lem:recursion} is also true for attractive pairwise interactions. 

\begin{proof} 
	For $G\in \mathcal D(I,J)$, let $L\subset J\setminus I$ the set of vertices $j\in J\setminus I$ that are adjacent to $\iota (I)$, i.e., 
	$$
		L = \{j\in J\setminus I\mid \{\iota (I), j\} \in E(G)\}. 
	$$
	Let $G'$ be the graph obtained from $G$ by removing the vertex $\iota (I)$ and all incident edges.  Assume first that $\#I \geq 1$ so that $I'\neq \varnothing$. Then every vertex of $G'$ must be connected in $G'$ to at least one vertex in $I'\cup L$, thus $G'\in \mathcal D(I'\cup L, J')$. The weight of $G$ is given by 
	$$
		w\bigl(G; (x_j)_{j\in J}\bigr) = \Biggl(\prod_{\substack{i\in I':\\ \{\iota(I),i\}\in E(G)}} f(x_{\iota(I)},x_i)\Biggr) \Biggl( \prod_{\ell \in L} f(x_{\iota (I)},x_\ell)) \Biggr) w\bigl(G'; (x_j)_{j\in J}\bigr) . 
	$$
	Conversely, given a possibly empty subset $L\subset J\setminus I$ and a graph $G'\in \mathcal D(I'\cup L, J')$, we construct a graph $G\in \mathcal D(I,J)$ by adding the vertex $\iota (I)$, adding all edges $\{\iota(I),\ell\}$, $\ell \in L$, and adding none, some, or all of the edges $\{\iota(I),i\}$, $i\in I'$. 
	The proof for the case $\#I\geq 1$ is concluded by summing first over all graphs $G$ that correspond to a given pair $(L,G')$, and then summing over the pairs $(L,G')$. 
	
	If $\#I =1$ and $\# J\geq 2$, then for a given graph $G$ the element $\iota(I)$ has at least one adjacent vertex, i.e., $L\neq \varnothing$, and the proof is completed as before, taking into account that $\psi(\varnothing, J;\cdot) =0$ if $J'\neq \varnothing$. If $\#I = \#J=1$, then $\psi(I,J;(x_j)_{j\in J}) = \psi(\{1\},\{1\};x_1) =1$ and the formula from the lemma holds true because $\psi(\varnothing,\varnothing;\cdot) =1$. 
\end{proof}

\noindent The partial sums satisfy the following recursive inequality. 

\begin{lemma} \label{lem:recursion2}
	For all $N\geq 2$ and  $k\in \N$, and all $(x_1,\ldots, x_k)\in \mathbb X^k$,
			\begin{multline*}
	\bigl(\vect{\tilde S}_{N}(z)\bigr)_k(x_1,\ldots,x_k) \leq z(x_1) \prod_{j=2}^k (1+ f(x_1,x_j)) \\ 
		\times \sum_{m=0}^{N-k} \frac{1}{m!} \int_{\mathbb X^m} \prod_{\ell=k+1}^{k+m} \bigl|f(x_1,x_\ell)\bigr| \bigl(\vect{\tilde S}_{N-1}(z)\bigr)_{k+m-1}(x_2,\ldots,x_{k+m}) \dd\lambda_z(x_{k+1})\cdots \dd \lambda_z(x_{k+m})
\end{multline*} 
	where the term for $k=1$, $m=0$ is to be read as $z(x_1)$. 
\end{lemma}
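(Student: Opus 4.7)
My plan is to apply Lemma~\ref{lem:recursion} with root set $I=[k]$ and vertex set $J=[n]$ inside the definition~\eqref{eq:sntildedef} of $\vect{\tilde S}_N$. With $\iota(I)=1$, $I'=\{2,\ldots,k\}$ and $J'=\{2,\ldots,n\}$, the lemma rewrites
\begin{equation*}
  \psi_{k,n}(x_1,\ldots,x_n) = \prod_{i=2}^k\bigl(1+f(x_1,x_i)\bigr)\sum_{L\subset\{k+1,\ldots,n\}}\Bigl(\prod_{\ell\in L}f(x_1,x_\ell)\Bigr)\psi\bigl(\{2,\ldots,k\}\cup L,\{2,\ldots,n\};x_2,\ldots,x_n\bigr).
\end{equation*}
Since $v\geq 0$ gives $1+f=\e^{-v}\in[0,1]$, the $\prod_{i=2}^k(1+f(x_1,x_i))$ factor survives the triangle inequality, and only the $f(x_1,x_\ell)$ factors acquire absolute values.

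Next I will substitute this pointwise bound into~\eqref{eq:sntildedef} and reorganize the sums. For each cardinality $m=|L|$, the symmetry property~\eqref{eq:symmetry} together with renaming of dummy variables shows that the $\binom{n-k}{m}$ subsets of size $m$ all contribute the same integral, so the sum over $L$ collapses to $\sum_{m=0}^{n-k}\binom{n-k}{m}$ times the integrand at $L=\{k+1,\ldots,k+m\}$. A further application of~\eqref{eq:symmetry}, relabelling $\{2,\ldots,n\}$ as $\{1,\ldots,n-1\}$, identifies $\psi(\{2,\ldots,k+m\},\{2,\ldots,n\};x_2,\ldots,x_n)$ with $\psi_{k+m-1,n-1}(x_2,\ldots,x_n)$. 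I then reindex the double sum over $(n,m)$ with $n\in\{k,\ldots,N\}$ and $m\in\{0,\ldots,n-k\}$ by the pair $(m,p)$ where $p=n-k-m\in\{0,\ldots,N-k-m\}$, and I split the integration measure as
\begin{equation*}
d\lambda_z^{m+p}=d\lambda_z^m(x_{k+1},\ldots,x_{k+m})\,d\lambda_z^p(x_{k+m+1},\ldots,x_{k+m+p}).
\end{equation*}
The inner sum in $p$ then reproduces exactly $(\vect{\tilde S}_{N-1}(z))_{k+m-1}(x_2,\ldots,x_{k+m})/\prod_{j=2}^{k+m}z(x_j)$, with the correct range $p\leq (N-1)-(k+m-1)$. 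The prefactor $\prod_{j=1}^k z(x_j)$ from~\eqref{eq:sntildedef} combines with $\prod_{\ell=k+1}^{k+m}z(x_\ell)$ (absorbed in $d\lambda_z^m$) to produce exactly $z(x_1)\prod_{j=2}^{k+m}z(x_j)$, which matches the statement.

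The main obstacle will be the combinatorial bookkeeping of indices and the $z$-factor telescoping, in particular verifying that the summation ranges and the weights inside $(\vect{\tilde S}_{N-1}(z))_{k+m-1}$ line up exactly after the change of variables $p=n-k-m$. The only genuine edge case is $k=1$, $m=0$: with the convention $\psi(\varnothing,\varnothing;\cdot)=1$ one has $(\vect{\tilde S}_{N-1}(z))_0=1$, so that term contributes $z(x_1)$, matching the convention stated in the lemma. Because $1+f(x_1,x_i)$ and $(\vect{\tilde S}_{N-1}(z))_{k+m-1}(\cdot)$ are non-negative majorants throughout, the resulting inequality holds pointwise with the correct orientation.
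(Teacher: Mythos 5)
Your proposal is correct and follows essentially the same route as the paper: apply Lemma~\ref{lem:recursion} with $I=[k]$, $J=[n]$, take absolute values (noting $1+f\ge 0$ and that the inner $\psi$'s are covered by the absolute values already present in the definition of $\vect{\tilde S}_{N-1}$), use the symmetry~\eqref{eq:symmetry} to collapse the sum over $L$ to a binomial factor, and reindex so that $\frac{1}{(n-k)!}\binom{n-k}{m}=\frac{1}{m!\,p!}$ reassembles the inner sum into $(\vect{\tilde S}_{N-1}(z))_{k+m-1}$. The paper performs the same reindexing by shifting $n\mapsto n-1$ rather than introducing $p=n-k-m$, but the computation and the treatment of the $k=1$, $m=0$ edge case are the same.
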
 

\noindent Thus~\eqref{eq:tildepicard} holds true. 

\begin{proof} 
	Let $N\geq 2$. 
	For $k\geq N+1$ we have $\bigl(\vect{\tilde S}_{N}(z)\bigr)_k(x_1,\ldots,x_k) =0$ and the inequality is trivial. Consider $k\in \{2,\ldots, N\}$. For  $n\in \{k,\ldots,N\}$ and $(x_1,\ldots,x_n)\in \mathbb X^n$. By  Lemma~\ref{lem:recursion} applied to $I = \{1,\ldots, k\}$ and $J= \{1,\ldots, n\}$, 
	\be \label{eq:revrecursion}
		\psi_{k,n}(x_1,\ldots, x_n) = \Biggl(\prod_{i=2}^k \bigl(1+f(x_{1}, x_i)\bigr)\Biggr) \sum_{L\subset \{k+1,\ldots,n\}} \Bigl(\prod_{\ell\in L} f(x_ {1},x_\ell)\Bigr) \psi\bigl(I'\cup L,J';(x_j)_{j\in J'}\bigr)
	\ee
	with $I'= \{2,\ldots,k\}$ and $J' =  \{2,\ldots,n\}$. Let $\sigma\in \mathfrak S_n$ be a permutation that acts on $\{k+1,\ldots,n\}$ only, i.e., $\sigma (i) = i$ for all $i\in \{1,\ldots,n\}$. By~\eqref{eq:symmetry}, we have for every $L\subset\{k+1,\ldots,n\}$, 
	\be \label{eq:revsymmetry}
		\Bigl(\prod_{\ell\in L} f(x_ {1},x_{\sigma(\ell)}) \Bigr)\psi\bigl(I'\cup L,J';(x_{\sigma(j)})_{j\in J'}\bigr) = \Bigl(\prod_{\ell\in \sigma(L)} f(x_ {1},x_\ell) \Bigr) \psi\bigl(I'\cup \sigma(L),J';(x_j)_{j\in J'}\bigr).
	\ee
	We apply the triangle inequality in~\eqref{eq:revrecursion}, integrate over $x_{k+1},\ldots, x_n$, note that the contribution of each set $L$ to the integral depends on the cardinality of $L$ alone, and obtain
	\begin{multline*}
		\int_{\mathbb X^{n-k}} \bigl|\psi_{k,n}(x_1,\ldots,x_n)\bigr| \dd \lambda_z(x_{k+1})\cdots \dd \lambda_z(x_n) \leq \Biggl(\prod_{i=2}^k \bigl(1+f(x_{1}, x_i)\bigr)\Biggr) \\
		\times \sum_{m=0}^{n-k} \binom{n-k}{m} \int_{\mathbb X^{n-k}}  \Biggl(\prod_{\ell=k+1}^{k+m-1}\bigl| f(x_1,x_\ell)\bigr|\Biggr) \bigl|\psi_{k+m-1,n-1}(x_2,\ldots,x_{n})\bigr| \dd \lambda(x_{k+1})\cdots \dd \lambda(x_n).
	\end{multline*} 
	We divide by $(n-k)!$, sum over $n=k,\ldots, N$,  and find that
	\be \label{eq:revisionmiddle}
	\begin{aligned} 
		&\sum_{n=k}^N \frac{1}{(n-k)!} \int_{\mathbb X^{n-k}} \bigl|\psi_{k,n}(x_1,\ldots,x_n)\bigr| \dd \lambda_z(x_{k+1})\cdots \dd \lambda_z(x_n) \\
		&\qquad \leq \Biggl( \prod_{i=1}^k \bigl(1+ f(x_1,x_i)\bigr) \Biggr) 
 \sum_{m=0}^{N-k} \Biggl( \prod_{\ell=k+1}^{k+m} \bigl| f(x_1,x_\ell)\bigr|\Biggr) \Bigl\{\cdots \Bigr\}  \dd \lambda_z(x_{k+1})\cdots \dd \lambda_z(x_{k+m}))
	\end{aligned} 
	\ee
	with 
	\begin{align*}
		\Bigl\{\cdots \Bigr\}  &= \sum_{n=k+m}^N \frac{1}{(n-k-m)!}
			  \int_{\mathbb X^{n-k}} \bigl|\psi_{k+m-1,n-1}(x_2,\ldots,x_{n})\bigr| \dd \lambda_z(x_{k+m+1})\cdots \dd \lambda_z(x_n).
	\end{align*} 
	Changing the summation index from $n$ to $n'= n-1$, we see that the expression is exactly $\bigl(\vect{\tilde S}_{N-1}(z)\bigr)_{k+\ell-1}(x_2,\ldots,x_k,y_{k+1},\ldots,y_{k+m})$, except for the missing product $z(x_2)\cdots z(x_k)$. Hence, multiplying~\eqref{eq:revisionmiddle} with $z(x_1)\cdots z(x_k)$ on both sides, we obtain the required inequality  when $k\geq 2$. The proof for $k=1$ is similar, with a careful consideration of the term $m=0$ which gives rise to $z(x_1)$. 
\end{proof} 

\begin{proof} [Proof of Proposition~\ref{prop:picard}]
	First we prove, by induction over $N$, that the inequality~\eqref{eq:picard-bound} holds true for all $N,k\in \N$. For $N=k=1$, we have $(\tilde{\vect S_1})_1(x_1) = z(x_1) \leq z(x_1)\exp( a(x_1))$ and the required inequality holds true.  For $N=1$ and $k\geq 2$, we have $(\tilde{\vect S_1})_k(x_1,\ldots, x_k)=0$ and the inequality holds true as well. 
	
	For the induction step, let $N\geq 2$ suppose that the bound ~\eqref{eq:picard-bound} holds true for all $k\in \N$ at $N-1$ instead of $N$. We insert the bound into right-hand side of the recursive inequality from Lemma~\ref{lem:recursion2}, then bound 
	\be \label{thisiswhere}
		\prod_{\substack{1\leq i \leq k\\ k+1\leq j \leq m}} (1+ f(x_i,x_j))\leq 1,
	\ee
	and finally use condition~\eqref{eq:suff1} with $t=0$ (compare the proof of Lemma~\ref{lem:contract}).  This yields the bound~\eqref{eq:picard-bound} for $N$ and completes the induction step. 
	
	Eq.~\eqref{eq:picard-bound} shows, in particular, that the integrals defining $\vect S_N$ are absolutely convergent. We may now revisit the induction step, but without absolute values and triangle inequalities. The inequalities then become equalities for $\vect S_N$: we find 
\be
	\bigl(\vect S_{N}(z)\bigr)_k = \bigl( K_z\vect S_{N-1}\bigr)_k,\quad   
	\bigl(\vect S_{N}(z)\bigr)_1(x_1)  = z(x_1) + \bigl( K_z \vect S_{N-1}(z)\bigr)_1(x_1)
\ee
and the proof of the proposition is complete. 
\end{proof} 

\noindent Theorem~\ref{thm:correlations} is a consequence of the Neumann series~\eqref{eq:neumann} and Proposition~\ref{prop:picard}. 

\begin{proof}[Proof of Theorem~\ref{thm:correlations}]
	Passing to the limit $N\to \infty$ in the inequality~\eqref{eq:picard-bound} from Proposition~\ref{prop:picard}, we obtain the estimate~\eqref{eq:correlations-bound} from Theorem~\ref{thm:correlations}. The equality~\eqref{eq:snpicard} shows 
	\be
		\vect S_N(z) = \vect e_z+ K_z \vect e_z+\cdots + K_z^{N-1} \vect e_z.
	\ee
	Thus $\vect S_N(z)$ is a partial sum of the Neumann series~\eqref{eq:neumann}. The bound~\eqref{eq:picard-bound} ensures that each $(\vect S_N)_k(x_1,\ldots,x_k)$ converges pointwise as $N\to \infty$. But we already know from the proof of Theorem~\ref{thm:correlations} that the Neumann series converges in the Banach space $E$ to the vector of correlation functions; thus $\rho_k(x_1,\ldots,x_k) = \lim_{N\to \infty} (\vect S_N)_k(x_1,\ldots,x_k)$ and we obtain the representation of the correlation functions. The bound~\eqref{eq:correlations-bound} with $t=0$ follows from Proposition~\ref{prop:picard}. For $t>0$, we note that $z\e^t$ satisfies~\eqref{eq:suff1} with $t=0$ so we can apply the inductive bound of Proposition~\ref{prop:picard} to $z\e^t$, and the proof is easily concluded. 
\end{proof}

\section{Log-Laplace functional and truncated correlation functions} \label{sec:truncated} 

Here we prove Theorems~\ref{thm:log-laplace} and~\ref{thm:truncated}. Theorem~\ref{thm:truncated} is deduced from Theorem~\ref{thm:log-laplace} by exploiting that the log-Laplace functional at $h$ is nothing else but the generating functional of the truncated correlation functions (factorial cumulant densities) at $u = \e^{-h} -1$, see Eq.~\eqref{eq:truncated-generating} below. Explicit bounds are proven with the complex contour integrals (here $t>0$ is crucial). 

 For the proof of Theorem~\ref{thm:log-laplace}, we first specialize Theorem~\ref{thm:correlations}, proven in the previous section, to the one-particle density ($k=1$). 
As noted earlier, the classes of graphs $\mathcal D_{1,n}$ and $\mathcal C_n$  are equal 
 (if every vertex $j\in \{2,\ldots,n\}$ connects to the vertex $1$, then the graph is connected, and vice-versa). It follows that $\psi_{1,n} = \varphi_n^\mathsf T$ and 
\be \label{eq:one-point-expansion}
	\rho_1(x_1)  =z(x_1) \sum_{n=1}^\infty \frac{1}{(n-1)!}\int_{\mathbb X^{n-1}} \varphi_n^\mathsf T(x_1,\ldots,x_n) \dd \lambda_z(x_2)\cdots \dd\lambda_z(x_n)
\ee
with absolutely convergent integrals and series, moreover the bound~\eqref{eq:keyconv} stated in Theorem~\ref{thm:log-laplace} is just the special case of the inequality~\eqref{eq:correlations-bound} in Theorem~\ref{thm:correlations}. 

For the proof of the identity~\eqref{eq:log-laplace}, the idea is to first prove a differentiated version of it. Formally, 
$$
	\left.\frac{\dd}{\dd t} \log \mathsf E\Bigl[\e^{- \int_\mathbb X(h+ t g) \dd \eta}\Bigr] \right|_{t=0} = \frac{\mathsf E[ (\int_\mathbb X g \dd \eta) \exp( - \int_{\mathbb X} h \dd \eta)]}{\mathsf E[ \exp( - \int_{\mathbb X} h \dd \eta)] } = \int_{\mathbb X} g(x_1) \rho_1^h(x_1) \dd\lambda(x_1),
$$
with $\rho_1^h(x)$ the one-particle density of a tilted measure $\mathsf P_h$, which we write succinctly with notation from variational derivatives as 
\be \label{eq:left}
	\frac{\delta}{\delta h(x_1)} \log \mathsf E\Bigl[\e^{- \int_\mathbb X  h \dd \eta}\Bigr]  = \rho_1^h(x_1).
\ee
The variational derivative of the right-hand of Eq.~\eqref{eq:log-laplace} is 
\be \label{eq:right}
	z(x_1) \e^{-h (x_1)} \sum_{n=1}^\infty \frac{1}{(n-1)!}\int_{\mathbb X^{n-1}} \varphi_n^\mathsf T(x_1,\ldots,x_n) \e^{- \sum_{i=2}^n h(x_i)} \dd \lambda_z(x_2)\cdots \dd\lambda_z(x_n).
\ee
This is nothing else but the right-hand side of~\eqref{eq:one-point-expansion} with $z$ replaced by $z \e^{-h}$. If the tilted measure $\mathsf P_h$ is a Gibbs measure at tilted activity $z\e^{-h}$, we can conclude that the expressions~\eqref{eq:left} and~\eqref{eq:right} are equal, i.e., the differentiated form of Eq.~\eqref{eq:log-laplace} holds true and it remains to undo the differentiation. 

The full proof is a little technical as we need to make sure that all expressions involved are convergent and that we can exchange differentiation and integration. We start with the proof that the tilted measure $\mathsf P_h$ is indeed a Gibbs measure with tilted activity $z\e^{-h}$. 

\begin{lemma} \label{lem:hgibbs}
	Let $\mathsf P\in \mathscr G(z)$ and $h:\mathbb X\to \R_+$ with $\int_\mathbb X h \dd \lambda_z< \infty$. Consider the measure $\mathsf P_h$ that is absolutely continuous with respect to $\mathsf P$, with Radon-Nikod{\'y}m derivative 
	\bes
		\frac{\dd \mathsf P_h}{\dd \mathsf P}(\eta) = \frac{\exp( - \int_\mathbb X h \dd \eta)}{\int_\mathcal N \exp(- \int_\mathbb X h\dd \gamma)\dd \mathsf P(\gamma)}. 
	\ees
	Then $\mathsf P_h \in \mathscr G(z\e^{-h})$. 
\end{lemma}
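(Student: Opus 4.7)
The plan is to derive the GNZ equation for $\mathsf P_h$ at activity $z\e^{-h}$ directly from the GNZ equation for $\mathsf P$ at activity $z$, by applying the latter to a suitably modified test function. First I would verify that the definition of $\mathsf P_h$ makes sense, i.e.\ that the normalising constant
\[
	Z_h := \int_\mathcal N \e^{-\int_\mathbb X h \dd \gamma} \dd \mathsf P(\gamma)
\]
satisfies $Z_h \in (0,1]$. Because $h\geq 0$ we have $\e^{-\int h\dd \eta}\in[0,1]$, hence $Z_h\leq 1$. For the strict positivity, the bound $\rho_1\leq z$ from~\eqref{eq:nonnegbound} together with the hypothesis $\int_\mathbb X h\dd \lambda_z<\infty$ gives
\[
	\mathsf E\Bigl[\int_\mathbb X h \dd \eta\Bigr] = \int_\mathbb X h \rho_1\dd \lambda \leq \int_\mathbb X h \dd \lambda_z < \infty,
\]
so $\int_\mathbb X h\dd \eta<\infty$ $\mathsf P$-a.s., and therefore $\e^{-\int h\dd \eta}>0$ on a set of positive $\mathsf P$-measure.

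Next, given a non-negative measurable $F:\mathbb X\times \mathcal N \to [0,\infty)$, I would apply the GNZ equation~\eqref{eq:gnz} for $\mathsf P$ to the test function
\[
	\tilde F(x,\eta) := F(x,\eta)\, \e^{-\int_\mathbb X h\dd \eta},
\]
which is manifestly non-negative and measurable. The key identity is the additivity
\[
	\int_\mathbb X h \dd (\eta+\delta_x) = \int_\mathbb X h\dd \eta + h(x),
\]
so that $\tilde F(x,\eta+\delta_x) = F(x,\eta+\delta_x)\, \e^{-\int h\dd \eta}\, \e^{-h(x)}$. Plugging into~\eqref{eq:gnz} yields
\[
	\mathsf E\Bigl[\e^{-\int h \dd\eta}\int_\mathbb X F(x,\eta)\dd \eta(x)\Bigr]
		= \int_\mathbb X \mathsf E\Bigl[F(x,\eta+\delta_x)\,\e^{-\int h\dd \eta}\,\e^{-W(x;\eta)}\Bigr]\, z(x)\e^{-h(x)}\dd \lambda(x).
\]

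Finally I would divide both sides by $Z_h$ and rewrite the resulting $\mathsf P$-expectations as $\mathsf P_h$-expectations using the definition of the Radon--Nikod\'ym derivative; the left-hand side becomes $\mathsf E_h[\int_\mathbb X F(x,\eta)\dd \eta(x)]$ and the right-hand side becomes $\int_\mathbb X \mathsf E_h[F(x,\eta+\delta_x) \e^{-W(x;\eta)}]\,z(x)\e^{-h(x)}\dd \lambda(x)$. This is the GNZ equation for $\mathsf P_h$ with pair interaction $v$ and activity $z\e^{-h}$. The modified activity $z\e^{-h}$ is a valid activity because it is measurable, non-negative, and $\int_B z\e^{-h}\dd \lambda \leq \int_B z\dd \lambda<\infty$ for every $B\in \mathcal X_\mathsf b$.

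I do not anticipate a serious obstacle: the argument is essentially a change-of-measure computation where the multiplicative structure of $\e^{-\int h\dd \eta}$ and the additive structure of $\int h \dd(\eta+\delta_x)$ interact perfectly. The only point demanding care is guaranteeing that $Z_h>0$ so that $\mathsf P_h$ is well defined, and that is what the integrability hypothesis $\int_\mathbb X h\dd \lambda_z<\infty$ is for.
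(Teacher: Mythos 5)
Your proposal is correct and follows essentially the same route as the paper: apply the GNZ equation for $\mathsf P$ to the test function $F(x,\eta)\e^{-\int_\mathbb X h\dd\eta}$, use the additivity $\int_\mathbb X h\dd(\eta+\delta_x)=\int_\mathbb X h\dd\eta+h(x)$ to produce the factor $\e^{-h(x)}$, and divide by the normalising constant. The only cosmetic difference is in establishing $Z_h>0$: the paper uses Jensen's inequality to get the quantitative lower bound $Z_h\geq \e^{-\int_\mathbb X h\dd\lambda_z}$, while you argue via the $\mathsf P$-a.s.\ finiteness of $\int_\mathbb X h\dd\eta$; both are valid.
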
 

\begin{proof}
	%Let us check first that the normalization constant is finite and non-zero. 
	From $h\geq 0$, Jensen's inequality, and the bound $\rho_1(x) \leq z(x)$ we get
	\bes
		0< \e^{- \int_\mathbb Xh \dd \lambda_z} \leq  \mathsf E\bigl[ \e^{- \int_\mathbb X h \dd  \eta}\bigr] \leq 1
	\ees
	so the normalization constant is strictly positive and finite. Let $F:\mathbb X\times \mathcal N\to \R_+$ be a  measurable map. Notice 
	$\int_\mathbb X h \dd (\eta + \delta_x) = \int_\mathbb X h \dd \eta+ h(x)$. 
	Then by~\eqref{eq:gnz}, we have 
	\be
		\mathsf E\Bigl[ \int_{\mathbb X} F(x,\eta) \e^{-\int_\mathbb X h\dd \eta}\dd \eta(x) \Bigr]
			= \int_{\mathbb X}\mathsf E[ F(x,\eta+\delta_x) \e^{-\int_\mathbb X h \dd\eta - W(x;\eta)}\Bigr] z(x) \e^{-h(x)} \dd \lambda(x). 
	\ee
	We divide on both sides by $\int_\mathcal N \e^{- \int_\mathbb X h\dd \gamma}\dd \mathsf P(\gamma) $ and find that $\mathsf P_h$ satisfies~\eqref{eq:gnz} with $z(x)$ replaced by $z(x)\exp(-h(x))$, hence $\mathsf P_h \in \mathscr G(z\e^{-h})$. 
\end{proof} 

\noindent Remember the function $a: \mathbb X\to \mathbb R_+$ from the convergence  criterion~\eqref{eq:suff1}.

\begin{lemma}  \label{lem:log-laplace-nonneg}
	Let $h:\mathbb X\to \R_+$ be a bounded measurable function. Suppose that $h$ is bounded and supported in some set of the form $\Lambda\cap \{\vect x \in \mathbb X\mid a(x) \leq M\}=:\Lambda_M$ with $\lambda_z(\Lambda)<\infty$ and $M\in (0,\infty)$. Then Eq.~\eqref{eq:log-laplace} holds true. 
\end{lemma}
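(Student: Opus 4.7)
The plan is to set $\phi(s):=\log \mathsf E[\e^{-s\int_\mathbb X h\dd \eta}]$ for $s\in [0,1]$, differentiate in $s$, and integrate back. Since $h\geq 0$, the tilted activity $z\,\e^{-sh}$ is pointwise $\leq z$, so condition~\eqref{eq:suff1} is preserved with the \emph{same} $a$ and $t$ when $z$ is replaced by $z\e^{-sh}$. By Lemma~\ref{lem:hgibbs}, the tilted measure $\mathsf P_{sh}$ lies in $\mathscr G(z\e^{-sh})$, and Theorem~\ref{thm:uniqueness} identifies it as the unique such Gibbs measure. Applying the one-point specialization~\eqref{eq:one-point-expansion} of Theorem~\ref{thm:correlations} to $\mathsf P_{sh}$ yields the convergent representation
\begin{equation*}
\rho_1^{sh}(x_1)= z(x_1)\e^{-sh(x_1)}\sum_{n=1}^\infty \frac{1}{(n-1)!}\int_{\mathbb X^{n-1}} \varphi_n^\mathsf T(x_1,\ldots,x_n) \prod_{j=2}^n \e^{-sh(x_j)}\dd \lambda_z(x_2)\cdots \dd \lambda_z(x_n).
\end{equation*}

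Next I differentiate $\phi$. Because $h$ is bounded with $\supp h\subset \Lambda_M$ and $\rho_1^{sh}\leq z$ by~\eqref{eq:nonnegbound}, the random variable $\int_\mathbb X h \dd \eta$ has finite expectation under $\mathsf P_{sh}$, so $\phi'(s)=-\int_\mathbb X h(x)\rho_1^{sh}(x)\dd \lambda(x)$. Substituting the expansion for $\rho_1^{sh}$ and symmetrizing $h(x_1)\leadsto \tfrac{1}{n}\sum_{j=1}^n h(x_j)$ using the symmetry of $\varphi_n^\mathsf T$, I obtain
\begin{equation*}
-\phi'(s)=\sum_{n=1}^\infty \frac{1}{n!}\int_{\mathbb X^n} \Bigl(\sum_{j=1}^n h(x_j)\Bigr)\e^{-s\sum_{i=1}^n h(x_i)}\,\varphi_n^\mathsf T(x_1,\ldots,x_n)\dd \lambda_z^n(\vect x).
\end{equation*}
Recognizing the integrand as $-\frac{\dd}{\dd s}[\e^{-s\sum_j h(x_j)}-1]\,\varphi_n^\mathsf T$, integrating over $s\in[0,1]$ and using $\phi(0)=0$ formally recovers \eqref{eq:log-laplace}.

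The remaining (and main) task is to justify, on the one hand, termwise differentiation in $s$ inside the series representation of $\phi$, and on the other hand the exchange of $\int_0^1\dd s$ with the sum over $n$ and the integrals over $\mathbb X^n$. For both, the dominating bound is obtained as follows: since $0\leq h \leq \|h\|_\infty$ and $\e^{-s\sum h(x_i)}\leq 1$, the $n$-th integrand above is bounded in absolute value by $\sum_{j=1}^n h(x_j)\,|\varphi_n^\mathsf T(\vect x)|$. After symmetrizing and summing, the key estimate~\eqref{eq:keyconv} (together with $|\varphi_1^\mathsf T|=1$) yields
\begin{equation*}
\sum_{n=1}^\infty \frac{1}{n!}\int_{\mathbb X^n}\Bigl(\sum_{j=1}^n h(x_j)\Bigr)|\varphi_n^\mathsf T(\vect x)|\dd \lambda_z^n(\vect x) \leq \int_\mathbb X h(x_1)\e^{a(x_1)}\dd \lambda_z(x_1)\leq \|h\|_\infty \e^M \lambda_z(\Lambda) <\infty,
\end{equation*}
uniformly in $s$. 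Dominated convergence then legitimizes both the termwise differentiation of $\phi(s)$ (once one checks that the same bound controls the difference quotients) and the subsequent application of Fubini to integrate $-\phi'(s)$ over $s\in[0,1]$. Collecting terms gives exactly the right-hand side of~\eqref{eq:log-laplace}, since $\int_0^1 (\sum_j h(x_j))\e^{-s\sum_i h(x_i)}\dd s = 1-\e^{-\sum_j h(x_j)}$.

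The principal obstacle is the bookkeeping for uniform domination: one must verify that the $n=1$ term (which is not controlled by~\eqref{eq:keyconv}) is handled separately by the trivial bound $|\varphi_1^\mathsf T|=1$, and that when one differentiates under the integral sign in $s$, the extra factor $\sum_j h(x_j)$ can still be absorbed using $\|h\|_\infty$ together with the $\lambda_z(\Lambda)<\infty$ and $\e^{a}\leq \e^M$ on $\supp h$. These conditions are precisely why the lemma assumes $h$ bounded and supported in $\Lambda_M$; without this extra support restriction one would need the more delicate argument based on the estimate~\eqref{eq:trick}, which will be used for the general $h$ in Theorem~\ref{thm:log-laplace}.
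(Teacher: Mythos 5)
Your proposal is correct and follows essentially the same route as the paper: tilt the measure via Lemma~\ref{lem:hgibbs}, note that $z\e^{-sh}$ satisfies~\eqref{eq:suff1} with the same $a$ and $t$, identify $-\phi'(s)=\int_\mathbb X h\,\rho_1^{sh}\dd\lambda$ with the $s$-derivative of the right-hand side of~\eqref{eq:log-laplace} via the one-point expansion and the symmetry of $\varphi_n^\mathsf T$, and justify the interchanges by dominated convergence using $\int_\mathbb X h\,\e^a\dd\lambda_z\leq \|h\|_\infty\e^M\lambda_z(\Lambda)<\infty$. The paper phrases this as $F'(s)=G'(s)$ for two explicitly defined functions and controls the difference quotients with the elementary bound $\eps^{-1}|\e^{-(s\pm\eps)\alpha}-\e^{-s\alpha}|\leq\alpha$, which is the check you defer to; otherwise the arguments coincide.
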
 

\begin{proof}
	Under the assumptions of the lemma, we have 
	\be \label{eq:hfirst}
		\int_\mathbb X h (x)\e^{a(x)}\dd \lambda_z(x)< \infty. 
	\ee
	The modified activity  $z\e^{-h}$ satisfies the bound~\eqref{eq:suff1} as well, with the same $a$ and $t$ as $z$, so we have a  representation for the one-point function $\rho_1^h$ of $\mathsf P_h$ analogous to~\eqref{eq:one-point-expansion}. Let us introduce functions $F,G:\R_+ \to \R$ by 
	\be 
		F(s) = \log \mathsf E\bigl[\e^{-s \int_\mathbb X h\dd \eta}\bigr],\quad 
		G(s) = \sum_{n=1}^\infty \frac{1}{n!}\int_{\mathbb X^n }(\e^{-s \sum_{i=1}^n h(x_i)} -1 )\varphi_n^\mathsf T(x_1,\ldots,x_n) \dd \lambda_z^n(\vect x).
	\ee
	Clearly $F(0) = G(0)=0$, we want to prove $F(1)=G(1)$.  Assuming that  differentiation, integration and summations can be exchanged, we get 
	\be \label{eq:fprime}
		F'(s) = - \int_{\mathcal N} \Bigl(\int_\mathbb X h\dd \eta\Bigr) \dd \mathsf P_{sh}(\eta) = \int_\mathbb X h(x) \rho_1^{sh}(x) \dd \lambda(x) 
	\ee
	and, exploiting the symmetry of the Ursell function $\varphi_N^\mathsf T$,  
	\begin{align} \label{eq:gprime}
		G'(s) &=- \sum_{n=1}^\infty \frac{1}{n!}\int_{\mathbb X^n }\Bigl(\sum_{i=1}^n h(x_i)\Bigr)\e^{-s \sum_{i=1}^n h(x_i)}\varphi_n^\mathsf T(x_1,\ldots,x_n) \dd \lambda_z^n(\vect x) \notag\\
			& = - \int_\mathbb X h(x_1) z(x_1) \e^{- s h(x_1)}\notag \\
			&\qquad \times \left(\sum_{n=1}^\infty \frac{1}{(n-1)!} \int_{\mathbb X^{n-1}} \varphi_n^\mathsf T(x_1,\ldots,x_n) \dd \lambda_{z\e^{-sh}}(x_2)\cdots \dd \lambda_{z\e^{-sh}}(x_n)\right) \dd \lambda (x_1).
	\end{align} 
	Using the analogue of ~\eqref{eq:one-point-expansion} for $\rho_1^{sh}$, we see that $F'(s) = G'(s)$ for all $s\geq 0$ and it follows that $F(s) =G(s)$ for all $s\geq 0$. In particular, $F(1) = G(1)$ and Eq.~\eqref{eq:log-laplace} holds true for bounded $h$. 
	It remains to justify~\eqref{eq:fprime} and~\eqref{eq:gprime}. For $s>\eps> 0$ we have 
	\be \label{eq:ebound}
			\frac{1}{\eps}\bigl|\e^{-(s\pm \eps)\alpha} - \e^{-s\alpha}\bigr|\leq \alpha \e^{- (s-\eps)\alpha}\leq \alpha, 
	\ee
	and for $s=0$ we have $\eps^{-1}|\e^{- \eps \alpha} - 1|\leq \alpha$. 
		For the difference quotients of $s\mapsto \mathsf E[\exp(- s \int_\mathbb X h \dd \eta)]$, we apply the inequality to $\alpha = \int_\mathbb X h\dd \eta$, note 
	\be
		\mathsf E\Bigl[\int_\mathbb X h \dd \eta\Bigr] = \int_\mathbb X h \rho_1\dd \lambda \leq \int_\mathbb X h \dd\lambda_z < \infty
	\ee
	and conclude with dominated convergence that differentiation and expectation can be exchanged. 
	For the difference quotients of $G(s)$, we apply the bound~\eqref{eq:ebound} to $\alpha = \sum_{i=1}^n h(x_i)$ and note that, in view of~\eqref{eq:keyconv} and the inequality~\eqref{eq:trick}, we have 
	\be
		\sum_{n=1}^\infty \frac{1}{n!}\int_{\mathbb X^n} \sum_{j=1}^n h(x_j)|\varphi_n^\mathsf T(x_1,\ldots,x_n)| \dd \lambda_z^n(\vect x) \leq \int_{\mathbb X} h(x) \e^{a(x)}\dd \lambda_z(x) < \infty.
	\ee
	and we conclude with dominated convergence that~\eqref{eq:gprime} holds true. It follows that Eq.~\eqref{eq:log-laplace} holds true if $h$ satisfies~\eqref{eq:hfirst}. 		
\end{proof} 

\noindent 
Thus we have proven that 
\be \label{eq:sp}
		\mathsf E\Bigl[\e^{- \int_\mathbb X h \dd \eta} \Bigr] = 
			\exp\Bigl( \sum_{n=1}^\infty \frac{1}{n!}\int_{\mathbb X^n} (\e^{ - \sum_{j=1}^n h (x_j)} - 1)  \varphi_n^\mathsf T(\vect x) \dd \lambda_z^n(\vect x) \Bigr)
\ee
when $h$ is non-negative, bounded, and supported in some set $\Lambda_M = \Lambda\cap \{a\leq M\}$. A straightforward argument involving monotone and dominated convergence shows that the identity extends to all non-negative $h$, but the extension to functions that take negative or complex values requires more work. 

In order to get rid of the condition $h\geq 0$, we express the left- and right-hand sides of Eq.~\eqref{eq:sp} as power series in $\e^{-h}$. Roughly, the idea is that if two power series of some variable $s$ coincide and converge on $s\in [0,1]$, then their coefficients and their domain of convergence must be equal and the identity extends to the whole domain of convergence. 
We start with the right-hand side of Eq.~\eqref{eq:sp}. Let $\Lambda_M\subset \mathbb X$ be such that $\int_{\Lambda_M} \e^a \dd\lambda_z <\infty$. Set 
\bes
	\vartheta_m(x_1,\ldots,x_m) = \sum_{k=0}^\infty \frac{1}{k!} \int_{(\mathbb X\setminus \Lambda_M)^k} \varphi_{m+k}^\mathsf T(x_1,\ldots,x_n,y_1,\ldots,y_k) \dd \lambda_z^k(\vect y). 
\ees
To lighten notation, we suppress the $\Lambda_M$-dependence from $\vartheta_m$. 
By~\eqref{eq:keyconv}, we have 
	\begin{align} \label{eq:thetasti}
		\sum_{m=1}^\infty \frac{\e^{t m}}{m!} \int_{\Lambda_M^m} |\vartheta_m(x_1,\ldots,x_m)| \dd \lambda^m(\vect x) &\leq \sum_{m=1}^\infty \sum_{k=0}^\infty \frac{\e^{t(m+k)}}{m!k!} \int_{\Lambda_M^m\times (\Lambda_M^c)^k} |\varphi_{m+k}^\mathsf T(\vect x)| \dd \lambda_z^{m+k} (\vect x) \notag \\
		& = \sum_{n=1}^\infty \frac{\e^{tn}}{n!}\int_{\mathbb X^n}| \varphi_n^\mathsf T(x_1,\ldots,x_n)| \1_{\{\exists j:\, x_j \in \Lambda_M\}}   \dd \lambda_z^n(\vect x) \notag \\
		& \leq \int_{\Lambda_M} \e^a \dd \lambda_z \leq \e^M \lambda_z(\Lambda)< \infty,
	\end{align} 
so the integrals and series in the definition of $\vartheta_m$ are absolutely convergent for $\lambda^m$-almost all $(x_1,\ldots,x_m)\in \Lambda_M^m$, moreover the constant
	\bes
		C_z(\Lambda_M) := \sum_{m=1}^\infty \frac{1}{m!} \int_{\Lambda_M^m} \vartheta_m (x_1,\ldots,x_m)\dd \lambda_z^m(\vect x)
	\ees
	is finite.
	
	\begin{lemma} \label{lem:right} 
	Let $h:\mathbb X\to [-t,\infty)\cup \{\infty\}$ or $h:\mathbb X\to \{s\in \C:\ \Re\, s\geq - t\}$. Assume that $\int_{\mathbb X} |\e^{-h} - 1|\e^a \dd \lambda_z <\infty$ and that $h$ is supported in some set $\Lambda_M =\Lambda\cap \{a \leq M\}$ with $\lambda_z(\Lambda)< \infty$. 
	Then 
	 \begin{multline}  \label{eq:rightleft}
			\exp\Bigl( \sum_{n=1}^\infty \frac{1}{n!}\int_{\mathbb X^n} (\e^{ - \sum_{j=1}^n h (x_j)} - 1)  \varphi_n^\mathsf T(\vect x) \dd \lambda_z^n(\vect x) \Bigr)\\
	 	= 
		\e^{ - C_z(\Lambda_M)} \Bigl(1 + \sum_{n=1}^\infty \frac{1}{n!} \int_{\Lambda_M^n} \sum_{r=1}^n \sum_{\{V_1,\ldots,V_r\}\in \mathcal P_n} \prod_{k=1}^r \e^{ - \sum_{i\in V_k} h(x_i)} \vartheta_{\#V_k}(\vect x_{V_k}) \dd \lambda_z^n(\vect x)  \Bigr)=:\mathcal R(h)
	 \end{multline} 
	 with absolutely convergent sums and integrals. 
\end{lemma}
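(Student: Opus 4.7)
My plan is to massage the left-hand side (the exponent in the original expression) into a single power series in the ``local'' variables in $\Lambda_M$ alone, and then apply the exponential formula to recover the partition sum on the right. The key observation is that $h$ vanishes off $\Lambda_M$, so for each $n$ I split the integration domain $\mathbb X^n$ according to how many of the arguments lie in $\Lambda_M$: writing $m$ for the number of $\Lambda_M$-variables and $k = n-m$ for the number of $\Lambda_M^c$-variables, symmetry of $\varphi_n^\mathsf T$ yields
\begin{equation*}
	\int_{\mathbb X^n} g\, \varphi_n^\mathsf T\, \dd \lambda_z^n = \sum_{m=0}^n \binom{n}{m} \int_{\Lambda_M^m \times (\Lambda_M^c)^{n-m}} g\, \varphi_n^\mathsf T\, \dd \lambda_z^n
\end{equation*}
when $g(\vect x) = \e^{-\sum h(x_j)}-1$ depends only on the $\Lambda_M$-variables. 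The $m=0$ contribution vanishes because $h \equiv 0$ on $\Lambda_M^c$ gives $g = 0$ there. Interchanging the two sums (justified below) and integrating out the $\Lambda_M^c$-variables exactly produces $\vartheta_m$ by its very definition, yielding
\begin{equation*}
	\sum_{n=1}^\infty \frac{1}{n!}\int_{\mathbb X^n} (\e^{-\sum h}-1) \varphi_n^\mathsf T\, \dd \lambda_z^n = \sum_{m=1}^\infty \frac{1}{m!} \int_{\Lambda_M^m} \Bigl( \e^{-\sum_{i=1}^m h(x_i)} - 1\Bigr) \vartheta_m(\vect x) \dd \lambda_z^m.
\end{equation*}

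Next I separate the ``$-1$'' term, which contributes exactly $-C_z(\Lambda_M)$, and exponentiate what remains. The remaining quantity is $\sum_{m\ge 1} A_m/m!$ with $A_m := \int_{\Lambda_M^m} \e^{-\sum h}\, \vartheta_m\, \dd \lambda_z^m$. The exponential formula (in the form $\exp(\sum_{m\ge 1} A_m/m!) = \sum_{r\ge 0} \frac{1}{r!} \sum_{m_1,\ldots,m_r\ge 1} \prod_k A_{m_k}/m_k!$) translates, upon regrouping $r$-tuples of sizes into ordered tuples of disjoint labelled blocks, into the sum over set partitions $\{V_1,\ldots,V_r\}\in \mathcal P_n$ appearing in the definition of $\mathcal R(h)$: the combinatorial factor $n!/(r!\prod_k m_k!)$ counts the number of ordered $r$-tuples of blocks of prescribed sizes, and using the symmetry of $\vartheta_m$ this is exactly what is needed to go from ordered tuples of integrals to a partition sum on $\{1,\ldots,n\}$. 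Multiplying through by the factor $\e^{-C_z(\Lambda_M)}$ yields the claimed identity.

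The main technical step, which I expect to be the principal hurdle, is justifying the Fubini exchange and the absolute convergence of all intermediate series. Under the hypotheses, $|\e^{-h(x)}| = \e^{-\Re h(x)} \leq \e^t$, and $\int_{\mathbb X} |\e^{-h}-1| \e^a \dd \lambda_z < \infty$. Combining the bound~\eqref{eq:keyconv} with the pointwise estimate~\eqref{eq:trick} (where the supremum $t$ is replaced by the essential supremum on $\supp h \subset \Lambda_M$, which is $\leq t$ since $\Re h \geq -t$) gives
\begin{equation*}
	\sum_{n=1}^\infty \frac{1}{n!}\int_{\mathbb X^n} \bigl|\e^{-\sum h} - 1\bigr|\, |\varphi_n^\mathsf T|\, \dd \lambda_z^n \leq \int_{\mathbb X} |\e^{-h}-1|\, \e^a\, \dd \lambda_z < \infty,
\end{equation*}
which legitimizes the rearrangement of the double sum over $n$ and $m$ and the integration by layers. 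For the right-hand series, the bound $|A_m| \leq \e^{tm} \int_{\Lambda_M^m} |\vartheta_m| \dd \lambda_z^m$ together with~\eqref{eq:thetasti} shows $\sum_{m\ge 1} |A_m|/m! \leq \e^M \lambda_z(\Lambda) < \infty$, so the exponential formula applies in the classical form (convergent series, absolutely summable rearrangement into partitions).
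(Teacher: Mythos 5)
Your proposal is correct and follows essentially the same route as the paper: split each $\mathbb X^n$-integral according to which arguments lie in $\Lambda_M$ (using that $h$ vanishes off $\Lambda_M$), integrate out the $\Lambda_M^c$-variables to produce the $\vartheta_m$'s, control everything with~\eqref{eq:trick}, \eqref{eq:keyconv} and~\eqref{eq:thetasti}, and finish with the exponential formula for set partitions~\eqref{eq:exponentialformula}. Your write-up actually makes explicit the binomial bookkeeping and the bound $|\e^{-h}|=\e^{-\Re h}\leq \e^{t}$ (which the paper states with an apparent sign typo), but there is no substantive difference in the argument.
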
 

\begin{proof} 
	We have already observed that the bound~\eqref{eq:keyconv} holds true, therefore the convergence of the integrals and the sums on the left-hand side of~\eqref{eq:rightleft} follows from the inequality~\eqref{eq:trick} and the integrability condition $\int_{\mathbb X} |\e^{-h} - 1|\e^a \dd \lambda_z <\infty$. On the left-hand side of~\eqref{eq:rightleft} the only non-zero contributions to integrals come from $\vect x$ with $x_j \in \Lambda_M$ for some $j$. 
	Straightforward computations together with the symmetry of the Ursell functions $\varphi_n^\mathsf T$ show that  the sum inside the exponential can be rewritten as 
	\be\label{eq:ja2}
		 \sum_{n=1}^\infty \frac{1}{n!}\int_{\mathbb X^n} (\e^{ - \sum_{j=1}^n h (x_j)} - 1)  \varphi_n^\mathsf T(\vect x) \dd \lambda_z^n(\vect x) = 	\sum_{m=1}^\infty \frac{1}{m!}\int_{\Lambda_M^m} (\e^{- \sum_{i=1}^m h(x_i)} - 1) \vartheta_m(x_1,\ldots,x_m) \dd \lambda_z^m(\vect x)
	\ee
	The proof of the equation requires some exchange of order of summation, which is justified with~\eqref{eq:trick} and estimates similar to ~\eqref{eq:thetasti}. The right-hand side of Eq.~\eqref{eq:ja2} is actually convergent also without the factor $-1$. Indeed, from $|\exp( - h)|= \exp(- \Re \, h) \leq \exp( - t)$ and~\eqref{eq:thetasti}, we get 
	\bes
		\sum_{m=1}^\infty \frac{1}{m!}\int_{\Lambda_M^m} \bigl| \e^{- \sum_{i=1}^m h(x_i)} \vartheta_m(x_1,\ldots,x_m)\bigr |\,  \dd \lambda_z^m(\vect x) \leq \int_{\Lambda_M}\e^a \dd \lambda_z < \infty. 
	\ees
	Exponentiating~\eqref{eq:ja2}, we find that the left-hand side of Eq.~\eqref{eq:rightleft} is given by 
	\bes
		 \exp \left( - C_z(\Lambda_M) + \sum_{m=1}^\infty \frac{1}{m!}\int_{\Lambda_M^m} \bigl| \e^{- \sum_{i=1}^m h(x_i)} \vartheta_m(x_1,\ldots,x_m)\bigr |\,  \dd \lambda_z^m(\vect x) \leq \int_{\Lambda_M}\e^a \dd \lambda_z\right)
	\ees
	and the proof is concluded with a standard identity on exponential generating functions of set partitions (see Eq.~\eqref{eq:exponentialformula}) below). 
\end{proof} 
	
Next we turn to the left side of Eq.~\eqref{eq:sp}. Let $h$ and $\Lambda_M$ be as in Lemma~\ref{lem:right} and $(j_{n,\Lambda_M})_{n\in \N_0}$ such that \be \label{eq:janossy} 
		\mathsf E\bigl[\e^{- \int_\mathbb X h \dd \eta} \bigr]  = j_{0,\Lambda_M} + \sum_{n=1}^\infty \frac{1}{n!} \int_{\Lambda_M^n} \e^{- \sum_{j=1}^n h(x_j)} j_{n,\Lambda_M}(\vect x) \dd \lambda_z^n (\vect x) =:\mathcal L(h)
\ee 	
whenever $h$ is supported in $\Lambda_M$ and the right-hand side converges (with absolute values in the integrand), for example, if $h\geq 0$. Thus $j_{0,\Lambda_M}$ is an avoidance probability and $j_{n,\Lambda_M}$, $n\geq 1$, are the \emph{Janossy densities}~\cite[Chapter 5.3]{daley-verejones2008vol2}, also called \emph{density distributions}~\cite{ruelle1970superstable},  of $\mathsf P$ in $\Lambda_M$ with respect to the reference measure $\lambda_z$. 
 In our setup the Janossy densities exist and satisfy 
\be \label{eq:janossyinv}
	j_{n,\Lambda_M}(\vect x) \prod_{i=1}^n z(x_i) = \rho_n(x_1,\ldots,x_n) + \sum_{k=1}^\infty \frac{(-1)^k}{k!} \int_{\Lambda_M^k} \rho_n(x_1,\ldots,x_n,y_1,\ldots,y_k) \dd \lambda ^k(\vect y)
\ee
with the convention $\rho_0=1$. The integrals and series are absolutely convergent because of~\eqref{eq:nonnegbound}. Eq.~\eqref{eq:janossyinv} is the well-known inversion formula for Janossy densities and correlation functions, see~\cite[Chapter 5, Eq.~(5.4.11)]{daley-verejones2008vol2} and the first equation after Eq.~(5.33) in~\cite{ruelle1970superstable}.  The odd-looking product $z(x_1)\cdots z(x_n)$ in~\eqref{eq:janossy} appears because the $\rho_n$'s are defined with the reference measure $\lambda$ rather than $\lambda_z$.

\begin {lemma} \label{lem:almostthere}
	Let $\Lambda_M$, $h$,  and $(\vartheta_m)_{m\in \N_0}$ be as in Lemma~\ref{lem:right}. Then the Janossy densities $(j_{n,\Lambda_M})_{n\in \N_0}$ are given by 
	\be \label{eq:jatheta}
		j_{0,\Lambda_M} =  \e^{- C_z(\Lambda_M)},\qquad  j_{n,\Lambda_M}(\vect x) = \e^{- C_z(\Lambda_M)}\sum_{r=1}^n \sum_{\{V_1,\ldots,V_r\}\in \mathcal P_n}\vartheta_{\#V_k}(\vect x_{V_k})
\ee
	for all $n\in \N$ and $\lambda_z^n$-almost all $(x_1,\ldots,x_n)\in \Lambda_M^n$. 
	Moreover Eq.~\eqref{eq:sp} extends to all functions $h$ supported in $\Lambda_M$ that take values in $[-t,\infty) \cup \{\infty\}$ or in $\{s\in \mathbb C\mid \Re\, s \geq - t\}$. 
\end{lemma}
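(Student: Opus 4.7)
The plan is to read off the Janossy densities by comparing the identity $\mathcal{L}(h) = \mathcal{R}(h)$, proved in Lemma~\ref{lem:log-laplace-nonneg} for non-negative bounded $h$ supported in $\Lambda_M$, with the representation provided by Lemma~\ref{lem:right}, and then to use the resulting closed-form expression to control exponential moments of $\eta(\Lambda_M)$ and extend the identity to complex-valued $h$ by dominated convergence.

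Set $\tilde{j}_0 := \e^{-C_z(\Lambda_M)}$ and
\bes
\tilde{j}_n(\vect{x}) := \e^{-C_z(\Lambda_M)} \sum_{r=1}^n \sum_{\{V_1,\ldots,V_r\} \in \mathcal{P}_n} \prod_{k=1}^r \vartheta_{\#V_k}(\vect{x}_{V_k}) \qquad (n\geq 1).
\ees
By Lemma~\ref{lem:right}, $\mathcal{R}(h)$ coincides with the Janossy-type series built from $(\tilde{j}_n)_{n\geq 0}$, while the defining property of the Janossy densities of $\mathsf P$ in $\Lambda_M$ makes $\mathcal{L}(h)$ the analogous series built from $(j_{n,\Lambda_M})_{n\geq 0}$. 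To conclude $j_{n,\Lambda_M} = \tilde{j}_n$ $\lambda_z^n$-a.e., I specialize to simple $h = -\log u$ with $u = \1_{\Lambda_M^c} + \sum_{i=1}^r s_i \1_{A_i}$, where $s_i \in (0,1]$ and $\{A_1,\ldots,A_r\}$ is a finite measurable partition of $\Lambda_M$. The equation $\mathcal{L}(h) = \mathcal{R}(h)$ then becomes an identity between multivariate polynomials in $s_1,\ldots,s_r$ on $(0,1]^r$, whose matching coefficients yield $\int_{A_{i_1}\times\cdots\times A_{i_n}}(j_{n,\Lambda_M}-\tilde{j}_n)\,\dd\lambda_z^n = 0$ for every tuple. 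A standard monotone class argument over refinements of the partition promotes this to the claimed $\lambda_z^n$-a.e.\ equality.

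With the formula for $j_{n,\Lambda_M}$ in hand, I bound, via the triangle inequality applied to the partition sum and the exponential formula for set partitions run with $|\vartheta_m|$ in place of $\vartheta_m$,
\bes
\sum_{n=0}^\infty \frac{\e^{tn}}{n!}\int_{\Lambda_M^n}|j_{n,\Lambda_M}| \, \dd \lambda_z^n \leq \e^{|C_z(\Lambda_M)|}\exp\!\Bigl(\sum_{m=1}^\infty \frac{\e^{tm}}{m!}\int_{\Lambda_M^m}|\vartheta_m|\, \dd \lambda_z^m\Bigr) < \infty,
\ees
the finiteness of the outer exponent being provided by~\eqref{eq:thetasti}. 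In particular $\mathsf E[\e^{t\eta(\Lambda_M)}]<\infty$, so $|\e^{-\int h\,\dd\eta}| \leq \e^{t\eta(\Lambda_M)} \in L^1(\mathsf P)$ for every $h$ admissible in the lemma. The Janossy series representation of $\mathcal{L}(h)$ then extends from bounded non-negative $h$ to all such $h$ by dominated convergence (truncate $h$ first in modulus on a sequence of subsets exhausting $\Lambda_M$ and pass to the limit on both sides), and a forward application of Lemma~\ref{lem:right}---again legitimised by the above absolute-convergence bound---identifies this series with $\mathcal{R}(h)$, yielding~\eqref{eq:sp}.

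The main technical obstacle will lie in the uniqueness step combined with the subsequent bookkeeping for complex $h$: the partition-sum/exponential-formula manipulation from Lemma~\ref{lem:right} must be shown to be reversible under mere absolute summability rather than the manifest positivity available when $h\geq 0$, and the exchange of expectation with series and of series with truncation limits must be uniformly dominated by $\e^{t\eta(\Lambda_M)}$. The strict positivity $t > 0$ supplies exactly the geometric slack $|\e^{-h(x_i)}| \leq \e^t$ that keeps the double series absolutely convergent throughout.
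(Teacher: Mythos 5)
Your proposal is correct and follows essentially the same route as the paper: identify $\mathcal L(h)=\mathcal R(h)$ on nonnegative bounded $h$ supported in $\Lambda_M$ (Lemma~\ref{lem:log-laplace-nonneg}), read off the Janossy densities from that identity, and then extend to the admissible complex-valued $h$ via the absolute-convergence bound coming from~\eqref{eq:thetasti}. The paper compresses the identification and extension steps into two sentences, whereas you supply the details (simple-function specialization, coefficient matching, and the $\e^{t\eta(\Lambda_M)}$ domination); your only cosmetic slip is calling the resulting expressions in $s_1,\ldots,s_r$ polynomials when they are absolutely convergent power series, which does not affect the coefficient-matching argument.
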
 

\begin{proof} 
	By Lemma~\ref{lem:log-laplace-nonneg} and Eq.~\eqref{eq:sp}, the functionals $\mathcal L(h)$ and $\mathcal R(h)$ from Eqs.~\eqref{eq:janossy} and~\ref{lem:right} have to coincide for all nonnegative, 
 bounded $h$ supported in $\Lambda_M$ and subject to~\eqref{eq:hcondition}, which is enough to ensure that ~\eqref{eq:jatheta} holds true. It follows that the identity $\mathcal L(h) = \mathcal R(h)$ extends to the functions $h$ that satisfy the assumptions of Lemma~\ref{lem:right}.  
\end{proof} 

Now we can complete the proof of Theorem~\ref{thm:log-laplace}.

\begin{proof}[Proof of Theorem~\ref{thm:log-laplace}] 
	We have already observed that the bound~\eqref{eq:keyconv} follows from Theorem~\ref{thm:correlations} and the identity $\mathcal D_{1,n} = \mathcal C_n$. 	
	Let $h$ be a measurable function that satisfies the integrability condition~\eqref{eq:hcondition} and take values in either $[-t,\infty)\cup \{\infty\}$ or in $\{s\in \mathbb C:\ \Re\, s\geq t\}$.  
 For $k\in \N$, set  $h_k:=h \1_{B(0,k)}\1_{\{a\leq k \}}$. By Lemma~\ref{lem:almostthere}, we have 
 	\be \label{eq:log-laplace-k}
 		\log \mathsf E\Bigl[ \e^{-\int_\mathbb X h_k \dd \eta}\Bigr] = \sum_{n=1}^\infty \frac{1}{n!}\int_{\mathbb X^n} (\e^{- \sum_{j=1}^n h_k(x_j)} - 1) \varphi_n^\mathsf T(x_1,\ldots,x_n)\dd \lambda^n(\vect x). 
 	\ee
	In order to pass to the limit on the left side, we write 
 	\bes
		\mathsf E\Bigl[ \e^{-\int_\mathbb X h_k \dd \eta}\Bigr]  =  1 + \sum_{n=1}^\infty \frac{1}{n!} \int_{\mathbb X^n} \prod_{i=1}^n (\e^{-h_k(x_i)} - 1) \rho_n(\vect x) \dd \lambda^n(\vect x),
	\ees	
	a 	similar identity holds true with $h_k$ replaced by $h$. Moreover $h_k\to h$ pointwise with
	\bes
		|\e^{-h_k}-1| =|\e^{-h} - 1|\1_{B(0,k)}\1_{\{a\leq k \}}\leq |\e^{-h} - 1|
	\ees	
	and
	\bes	
		 1 + \sum_{n=1}^\infty \frac{1}{n!} \int_{\mathbb X^n} \prod_{i=1}^n |\e^{-h(x_i)} - 1| \rho_n(\vect x) \dd \lambda^n(\vect x) \leq \exp\Bigl( \int_\mathbb X |\e^{- h} - 1|\dd \lambda_z\Bigr)< \infty. 
	\ees
	Dominated convergence thus yields $\mathsf E[\exp( - \int_\mathbb X h_k \dd \eta)]\to \mathsf E[\exp( - \int_\mathbb X h \dd \eta)]$, i.e., we can exchange limits and integration  in the left-hand side of Eq.~\eqref{eq:log-laplace-k}. 	
On the right-hand side of~\eqref{eq:log-laplace-k}, we can can exchange limits and integration too because of the inequality 
	\be
		|\e^{- \sum_{i=1}^n h_k(x_i)} - 1|\leq  \e^{(n-1)t} \sum_{i=1}^n |\e^{- h_k(x_i)} - 1|\leq \e^{(n-1)t} \sum_{i=1}^n |\e^{- h(x_i)} - 1|
	\ee
	(remember Eq.~\eqref{eq:trick} !), 
	the bound~\eqref{eq:keyconv} and dominated convergence. Altogether, passing to the limit $k\to \infty$ in~\eqref{eq:log-laplace-k}, we find that~\eqref{eq:log-laplace} holds true for $h$. 
\end{proof}

\begin{proof}[Proof of Theorem~\ref{thm:truncated}]
	Let $h:\mathbb X\to \R_+$. It follows from Corollary~9.5.VIII in~\cite{daley-verejones2008vol2} that 
	\be \label{eq:truncated-generating}  
		\log \mathsf E\Bigl[\e^{- \int_\mathbb X h \dd \eta}\Bigr] 
		 = \sum_{\ell=1}^\infty \frac{1}{\ell!} \int_{\mathbb X^\ell} \prod_{j=1}^\ell (\e^{- h(x_j)}-1) \rho_\ell^\mathsf T(x_1,\ldots,x_\ell) \dd \lambda^\ell(\vect x)
	\ee 
	whenever the right-hand side is absolutely convergent and $h$ is non-negative with bounded support. In the right-hand side of Eq.~\eqref{eq:log-laplace} in Theorem~\ref{thm:log-laplace}, 	
	we express the log-Laplace functional in terms of $\exp( - h)-1$. We have 
	\be
		\e^{- \sum_{i=1}^n h(x_i)}-1 = \sum_{\substack{I\subset [n]:\\ I\neq \varnothing}}\  \prod_{i\in I} (\e^{- h(x_i)}-1)
	\ee
	and, if $|h|\leq t$,
	\be \label{eq:trickcoral} 
		\sum_{\substack{I\subset [n]:\\ I\neq \varnothing}}\  \prod_{i\in I} \bigl| \e^{- h(x_i)}-1\bigr| 
		= \prod_{i=1}^n \bigl( 1+ |\e^{- h(x_i)} - 1|\bigr) -1 \leq \e^{(n-1)t}\sum_{j=1}^n |\e^{-h (x_j)}-1|,
	\ee
	compare Eq.~\eqref{eq:trick}. 
	Exploiting the symmetry of the Ursell functions, we deduce from Eq.~\eqref{eq:log-laplace} in Theorem~\ref{thm:log-laplace} that whenever $|h|\leq t$ and $h$ satisfies condition~\eqref{eq:hcondition}, we have 
	\begin{align}\label{eq:coral2} 
		\log \mathsf E\Bigl[\e^{- \int_\mathbb X h \dd \eta}\Bigr] 
		&= \sum_{n=1}^\infty \frac{1}{n!}\sum_{\ell=1}^n \binom{n}{\ell}\int_{\mathbb X^n} \prod_{i=1}^\ell(\e^{- h (x_i)} - 1) \varphi_n^\mathsf T(x_1,\ldots,x_n) \dd\lambda_z^n(\vect x) \\
		& = \sum_{\ell=1}^\infty\frac{1}{\ell!} \int_{\mathbb X^\ell} \prod_{j=1}^\ell (\e^{- h(x_j)}-1)\prod_{j=1}^\ell z(x_j) \notag \\
		&\qquad \qquad \times \left( \sum_{k=0}^\infty \frac{1}{k!}\int_{\mathbb X^k} \varphi_{\ell+k}^\mathsf T(x_1,\ldots,x_\ell,y_1,\ldots,y_k) \dd\lambda_z^k(\vect y) \right) \dd \lambda^\ell(\vect x)
	\end{align}
	The exchange of the order of summation is justified with~\eqref{eq:trickcoral} and absolute convergence. 
	Changing variables to $u(x) = 1 - \e^{-h(x)}$ and comparing~\eqref{eq:truncated-generating} and~\eqref{eq:coral2}, we find 
	\begin{multline}
		\int_{\mathbb X^\ell} \prod_{j=1}^\ell u(x_j) \rho_\ell^\mathsf T(x_1,\ldots,x_\ell) \dd \lambda^\ell(\vect x) \\
			= \int_{\mathbb X^\ell} \prod_{j=1}^\ell u(x_j) \left(\sum_{k=0}^\infty \frac{1}{k!}\int_{\mathbb X^k} \varphi_{\ell+k}^\mathsf T(x_1,\ldots,x_\ell,y_1,\ldots,y_k) \dd\lambda_z^k(\vect y)\right) \dd \lambda^\ell(\vect x)
	\end{multline} 
	for all $u:\mathbb X\to [0,1]$ that have bounded support and satisfy $\int_\mathbb X u \e^{a}\dd\lambda_z< \infty$. Eq.~\eqref{eq:truncated} follows. 
	For $s\in \mathbb C$ with $|s|\leq \e^t - 1$, we have 
	\begin{align*}
		&\sum_{\ell=0}^\infty \frac{|s|^\ell}{\ell!} \sum_{k=0}^\infty \frac{1}{k!} \int_{\mathbb X^{\ell+k}} |\varphi_{\ell+k+1}^\mathsf T(x_0,x_1,\ldots,x_{\ell+k}) |\dd \lambda_z(x_1)\cdots \dd \lambda_z(x_{\ell+k} )\\
		&\quad \leq \sum_{m=0}^\infty \frac{1}{m!} (1 + |s|)^m \int_{\mathbb X^m} |\varphi_{m+1}^\mathsf T(x_0,x_1,\ldots,x_m) \dd \lambda_z(x_1)\cdots \dd\lambda_z(x_m) \leq \e^{a(x_0)}. 
	\end{align*} 
	Taking contour integrals along the circle centered at the origin with radius $\e^t - 1$, we deduce 
	\begin{align*}
		\frac{1}{\ell!} \sum_{k=0}^\infty \frac{1}{k!} \int_{\mathbb X^{\ell+k}} |\varphi_{\ell+k+1}^\mathsf T(x_0,x_1,\ldots,x_{n+k}) |\dd \lambda_z(x_1)\cdots \dd \lambda_z(x_{\ell+k} )
		&\leq \Bigl|\frac{1}{2\pi \mathrm{i}}\oint \frac{\dd s}{s^{\ell+1}}\Bigr|\e^{a(x_0)} \\
		& \leq \frac{1}{(\e^t - 1)^\ell}\, \e^{a(x_0)}
	\end{align*}
	and the bound~\eqref{eq:truncated-convergence} follows. 
\end{proof}

\section{Gibbs measures in finite volume. Estimates for $t=0$} \label{sec:finite-volume} 

Here we prove Theorems~\ref{thm:finite-volume} and~\ref{thm:finite-volume-corr} on the partition function and correlation functions in finite volume. The proof of Theorem~\ref{thm:finite-volume} is fairly standard, once the bound~\eqref{eq:keyconv} is available. The proof of Theorem~\ref{thm:finite-volume-corr} explains the appearance of the class $\mathcal D_{k,n}$ of multirooted graphs directly, without any reference to Kirkwood-Salsburg equations. The theorems are proven in reverse order: first Theorem~\ref{thm:finite-volume}, then Theorem~\ref{thm:finite-volume-corr} and finally Theorem~\ref{thm:log-laplace-fivo}. 

\begin{proof} [Proof of Theorem~\ref{thm:finite-volume}] 
	First we note that the bound~\eqref{eq:keyconv} with $t=0$ follows from the identity $\varphi_{n}^\mathsf T = \psi_{1,n}$ and  Proposition~\ref{prop:picard} applied to $k=1$. The bound~\eqref{eq:fivo2} is an immediate consequence of~\eqref{eq:keyconv}. The proof of Theorem~\ref{thm:finite-volume} then follows standard arguments~\cite{ruelle1969book} which we reproduce for the reader's convenience. 
	Let $\mathcal C(V)$ be the collection of connected graphs with vertex set $V$ (thus $\mathcal C_n = \mathcal C([n])$). We have 
	$$
		\e^{- H_n(x_1,\ldots,x_n)} = \prod_{1\leq i < j \leq n}\bigl(1+ f(x_i,x_j)\bigr) 
			= \sum_{E\subset \{\{i,j\}\mid 1 \leq i < j \leq n\}} \prod_{\{i,j\}\in E} f(x_i,x_j).
	$$
	The sum over subsets $E$ can be reinterpreted as a sum over graphs with vertex set $\{1,\ldots,n\}$ and edge sets $E$ so that 
	\be \label{eq:keygraphs}
		\e^{- H_n(x_1,\ldots,x_n)}  =  \sum_{G\in \mathcal G_n} w(G;x_1,\ldots,x_n).
	\ee
	As the weight of a graph is the product of the weights of its connected components, we deduce
	\begin{align*} 
		\e^{- H_n(x_1,\ldots,x_n)}	& = \sum_{r=1}^n \sum_{\{V_1,\ldots,V_r\}\in \mathcal P_n} \prod_{k=1}^r \sum_{G_k\in \mathcal C(V_k)} w(G_k;\vect x_{V_k}) \\
			& =  \sum_{r=1}^n \sum_{\{V_1,\ldots,V_r\}\in \mathcal P_n} \prod_{k=1}^r \varphi_{\#V_k}^\mathsf T (\vect x_{V_k}).
	\end{align*} 
	We integrate both sides over $\Lambda^n$ and note that integrals factorize as well, moreover on the right-hand side they depend on the cardinality of $V_i$ alone. Thus 
	$$
		\Xi_\Lambda(z) = 1+\sum_{n=1}^\infty \frac{1}{n!}\sum_{r=1}^n \sum_{\{V_1,\ldots,V_r\}\in \mathcal P_n} A_{\#V_1}\cdots A_{\#V_k} 
	$$
	with 
	\be \label{eq:anchoice}
		A_n:= \int_{\Lambda^n} \Bigl( \sum_{G\in \mathcal C_n} w(G;x_1,\ldots,x_n) \Bigr) \dd \lambda_z^n(\vect x) = \int_{\Lambda^n} \varphi_n^\mathsf T(x_1,\ldots,x_n) \dd \lambda_z^n(\vect x).  
	\ee
	It is a general combinatorial fact that 
	\be \label{eq:exponentialformula}
		1+ \sum_{n=1}^\infty \frac{1}{n!}\sum_{r=1}^n \sum_{\{V_1,\ldots,V_r\}\in \mathcal P_n} A_{\#V_1}\cdots A_{\#V_k} = \exp\Bigl( \sum_{n=1}^\infty \frac{1}{n!} A_n\Bigr)
	\ee
	whenever $\sum_{n=1}^\infty \frac{1}{n!}|A_n| < \infty$. The latter condition is satisfied for the concrete choice~\eqref{eq:anchoice} because of~\eqref{eq:keyconv} with $t=0$ and we obtain 
	\bes
		\Xi_\Lambda(z)  = \exp\left( \sum_{n=1}^\infty \frac{1}{n!} \int_{\Lambda^n} \varphi_n^\mathsf T(x_1,\ldots,x_n) \dd \lambda_z^n(\vect x)\right). \qedhere
	\ees
\end{proof} 

\begin{proof}[Proof of Theorem~\ref{thm:finite-volume-corr}] 
	The bound~\eqref{eq:correlations-bound} with $t=0$ follows from Proposition~\ref{prop:picard}. In order to go from Theorem~\ref{thm:finite-volume} to correlation functions we note that the generating functional of the latter is
	 \be \label{eq:fivb}
	 	 \mathsf E_\Lambda\Bigl[\prod_{x\in \Lambda} (1+u(x))^{\eta(\{x\})}\Bigr] = 1 + \sum_{k=1}^\infty \frac{1}{k!}\int_{\Lambda^k} \prod_{j=1}^k u(x_j) \rho_{k,\Lambda}(\vect x) \dd \lambda^k(\vect x).
	 \ee
	 The identity holds true for all measurable  $u:\Lambda\to [-1,0]$, the right-hand side of~\eqref{eq:fivb} is absolutely convergent because of the bound~\eqref{eq:nonnegbound}. By the definition of partition functions, 
	\be \label{eq:fivc}
		\mathsf E_\Lambda\Bigl[\prod_{x\in \Lambda} (1+u(x))^{\eta(\{x\})}\Bigr] =\frac{ \Xi_\Lambda\bigl( z(1+u)\bigr)}{\Xi_\Lambda(z)}. 
	\ee
	In order to evaluate $\Xi_\Lambda(z(1+u))$, we remember~\eqref{eq:keygraphs} and expand 
	\begin{align*}
		\prod_{j=1}^n (1+u(x_j)) \sum_{G\in \mathcal G_n} w(G;x_1,\ldots,x_n) 
		= \sum_{J\subset [n]} \prod_{j\in J} u(x_j) \sum_{G\in \mathcal G_n} w(G;x_1,\ldots,x_n).
	\end{align*} 
	Given a non-empty subset $J\subset [n]$, we establish a one-to-one correspondence between graphs $G$ and pairs $(G_1,G_2)$ that consist of a graph $G_1$ on some subset $V\supset J$ subject to connectivity constraints, and a graph $G_2$ on $[n]\subset V$, as follows. 	
	
		For $G\in \mathcal G_n$ and a non-empty subset $J\subset [n]$, let $V\subset[n]$ be those vertices that connect to some vertex $j\in J$ by a path in $G$ (thus $J\subset V\subset [n]$), and $W= [n]\setminus V$. Clearly $G$ cannot have any edge connecting $V$ and $W$. Let $G|_V$ be the graph with vertex set $V$ and $W$ and edge set $\{\{i,j\}\mid i,j\in V,\, \{i,j\}\in E(G)\}$, similarly for $G|_W$. 	
	 Conversely, let $\mathcal D_J(V)$ be the graphs on $V$ for which every vertex in $V$ connects to some vertex $j\in J$. Given two graphs $G_1\in \mathcal D_J(V)$, $G_2\in \mathcal G(W)$ such that every vertex in $V$ connects to some elements $j\in J$ by a path in $G_1$, we obtain a graph $G\in \mathcal G_n$ with edge set $E(G) = E(G_1)\cup E(G_2)$. In this way we have a one-to-one correspondence, and we find 
	 \begin{multline*}
	 	\prod_{j=1}^n (1+u(x_j)) \sum_{G\in \mathcal G_n} w(G;x_1,\ldots,x_n) 
	 		= \sum_{G\in \mathcal G_n} w(G;x_1,\ldots,x_n) \\
	 		+	 		
	 		 \sum_{\substack{J\subset [n]:\\ J\neq \varnothing}} \sum_{\substack{\{V,W\}\in \mathcal P_n:\\ V\supset J}} \Bigl( \prod_{j\in J} u(x_j) \sum_{G_1\in \mathcal D_J(V)} w(G_1;\vect x_V)\Bigr) \Bigl(\sum_{G_2\in \mathcal G(W)} w(G_2; \vect x_W)  \Bigr).
	 \end{multline*}
	 Exploiting the symmetry of the graph weights, we deduce 
	 \be\label{eq:fiva}
	 	\Xi_\Lambda\bigl( z(1+u)\bigr)  = \Xi_\Lambda(z)\left( 1+ \sum_{k=1}^\infty \frac{1}{k!} \int_{\Lambda^k} \prod_{j=1}^k u(x_j) \Bigl(\sum_{m=0}^\infty \frac{1}{m!}\int_{\Lambda^m} \psi_{k,k+m}(x_1,\ldots,x_k,y_1,\ldots,y_m)\dd \lambda_z^m(\vect y)\Bigr)\right)
	 \ee
	 (think $k=\#J$, $m=\# (V\setminus J)$). 	 
	 The expression of the correlation functions  follows from Eqs.~\eqref{eq:fivb},~\eqref{eq:fivc} and~\eqref{eq:fiva}. 
\end{proof} 

\begin{proof} [Proof of Theorem~\ref{thm:log-laplace-fivo}] 
	The validity of~\eqref{eq:fivo2} has been checked at the beginning of the proof of Theorem~\ref{thm:finite-volume}. The representation for the log-Laplace functional builds on Theorem~\ref{thm:finite-volume-corr} and is similar to the proof of Theorem~\ref{thm:log-laplace} provided in Section~\ref{sec:truncated}. 
\end{proof}

\section{Kirkwood-Salsburg equations for trees and forests}\label{sec:trees}

Here we explain the connection between the Kirkwood-Salsburg equation from Lemma~\ref{lem:ks} and generating functions for trees. The key point is that the \emph{non-linear} fixed point equation~\eqref{eq:fptrees} for tree generating functions translates into a \emph{linear} set of equations for the generating functions of forests that is similar to the Kirkwood-Salsburg equations. 
As a by-product, we obtain a new proof of the implication (ii) $\Rightarrow$ (i) in  Proposition~\ref{prop:fptrees} that clarifies the relation between Kirkwood-Salsburg equations and inductive convergence proofs.

 Let $B_k(q;x_1,\ldots,x_k)$, $k \in \N_0$, be the family of non-negative weight functions given by 
\be
	B_0(q) := 1, \quad B_k(q;x_1,\ldots,x_k) = \prod_{i=1}^k |f(q;x_i)| \prod_{1\leq i < j \leq k} (1+ f(x_i,x_j)),
\ee
Set $F_0:=1$ and for $k\in \N$ and $\vect q\in \mathbb X^k$, set 
\bes
 F_k(q_1,\ldots,q_k;z) = \prod_{i=1}^k \tilde T_{q_i}^\bullet (z).
\ees	
$F_k$ is the generating function for $k$-forests, i.e., $k$-tuples $F = ((T_1,r_1),\ldots,(T_k,r_k))$ of rooted trees such that their respective vertex sets $V_1,\ldots,V_k$ form a set partition of $V$. The weight of a forest is the product of the weights of the trees, the root colors are prescribed as $x_{r_i}=q_i$. Clearly 
\bes
	F_{n+1}(q_0,\ldots,q_{n};z) = \tilde T_{q_0}^\bullet (z) F_{n}(q_1,\ldots,q_n;z).
\ees
Using the \emph{non-linear} fixed point equation~\eqref{eq:fptrees} for trees, we obtain a \emph{linear} set of equations for forests: we have 
\begin{multline*} 
	F_{n+1}(q_0,\ldots,q_{n};z)
	 = z(q_0) \Bigl( B_0(q_0) F_n(q_1,\ldots,q_n;z) \\ + \sum_{\ell=1}^\infty \frac{1}{\ell!} \int_{\mathbb X^\ell} B_\ell(q_0;x_1,\ldots,x_\ell) F_{n+\ell}(q_1,\ldots,q_n,x_1,\ldots,x_\ell;z) \dd \lambda^\ell(\vect x) \Bigr),
\end{multline*}
a variant of~\eqref{eq:ks} for forests. Let $F_k^{(N)}(q_1,\ldots,q_k;z)$ be the partial sums of the generating functions where we sum only over forests with vertex sets $[n]$, $n\leq N$. Proceeding as in the proof of Proposition~\ref{prop:picard}, we find $F_1^{(1)}(q_1) = z(q_1)$, $F_k^{(1)} =0$ for $k \geq 2$, and 
\begin{multline*} 
	F_{n+1}^{(N+1)}(q_0,\ldots,q_{n};z)
	 = z(q_0) \Bigl( B_0(q_0) F_n^{(N)} (q_1,\ldots,q_n;z) \\ + \sum_{\ell=1}^\infty \frac{1}{\ell!} \int_{\mathbb X^\ell} B_\ell(q_0;x_1,\ldots,x_\ell) F_{n+\ell}^{(N)} (q_1,\ldots,q_n,x_1,\ldots,x_\ell;z) \dd \lambda^\ell(\vect x) \Bigr). 
\end{multline*}
An induction over $N$ then shows that if 
\bes
	B_0(q)+ \sum_{k=1}^\infty \frac{1}{k!}\int_{\mathbb X^k}B_k(q;x_1,\ldots,x_k) \e^{\sum_{i=1}^k a(x_i)} \dd \lambda_z^k(\vect x) \leq \e^{a(x_i)}, 
\ees
then $F_n^{(N)} (q_1,\ldots,q_n;z) \leq \prod_{i=1}^n z(q_i) \e^{a(q_i)}$ for all $N,n, \vect q$. Passing to the limit $N\to \infty$, we find that the forest generating functions are convergent. In particular, the generating function for trees (i.e., $1$-forests) is convergent as well. This proves the implication (ii)$\Rightarrow$(i) in Proposition~\ref{prop:fptrees}. 

%For a rooted tree $(T,r)$ with finite vertex set $V$, define the weight 
%\bes
%	\bar w(T,r; (x_i)_{i\in V}) = \prod_{k\in V} B_{\#C_k^{(r)}}\bigl(x_k; (x_i)_{i\in C_k^{(r)}}\bigr),
%\ees
%with $C_k^{(r)}$ the children of $k$. 
%A \emph{$k$-forest} with vertex set $V$ is a  We assign the forest the weight 
%\bes
%	\tilde w(F; (x_i)_{i\in V}) = \prod_{\ell=1}^k \widehat w(T_\ell,r_\ell; (x_i)_{i\in V_\ell}).
%\ees
%Let $F_{1,q}(z)  = \bar T_q^\bullet(z)$ and $F_{k;q_1,\ldots,q_k} = \prod_{i=1}^k \bar T_{q_i}^\bullet (z)$. 
%
%The generating functions 
%\bes
%	F_{k;q_1,\ldots,q_k}
%\ees	

\section{Cumulants of second-order stochastic integrals} \label{sec:stochmoment}

Here we prove Propositions~\ref{prop:stochmoment} and~\ref{prop:cumulants-diagrams}.

\begin{proof} [Proof of Proposition~\ref{prop:stochmoment}]
 Pick $\eta = \sum_{i=1}^\kappa \delta_{x_i} \in \mathcal N$ and set $[\kappa]=\N$ if $\kappa = \infty$. Assume that $u$ is non-negative or that $\int_{\mathbb X} |u|\dd\eta^{(2)} < \infty$. 
 A reasoning similar to the proof of Eq.~\eqref{eq:doublexp} shows
\bes
	\Bigl( \sum_{i <j} u(x_i,x_j)\Bigr)^m = 
	%\sum_{ \substack{ (m_{ij})\in \N_0^{\mathcal E_2([\kappa])}:\\ \, \sum_{i<j} m_{ij} = m}} \frac{m!}{\prod_{i<j} m_{ij}!} \prod_{i<j} u(x_i,x_j)^{m_{ij}} =
	 \sum_{\gamma\in \mathcal M([\kappa], [m])} \prod_{\{i,j\}\in \mathcal E_2([\kappa])} u(x_i,x_j)^{m_{ij}(\gamma)}.
\ees
Notice that the number of edges $\{i,j\}$ with $m_{ij}(\gamma)\geq 1$ is at most $m$ hence, in particular, finite even when $\kappa = \infty$, so the product is finite. 

For $\gamma\in \mathcal M([\kappa], [m])$, let $V\subset [\kappa]$ be the set of vertices that belong to some edge of $\gamma$. The cardinality $n= \#V$ satisfies $2 \leq n \leq 2m$ and the restriction of $\gamma$ to $V$ is a spanning multigraph on $V$. We have 
\bes
	\Bigl( \sum_{i <j} u(x_i,x_j)\Bigr)^m = \sum_{V\subset [\kappa]} \sum_{\gamma\in \mathcal M_s(V, [m])} \prod_{\{i,j\}\in \mathcal E_2([\kappa])} u(x_i,x_j)^{m_{ij}(\gamma)}  = \sum_{V\subset [\kappa]} F_{nm}(\vect x_V)
\ees
with 
\bes
	 F_{nm}(x_1,\ldots,x_n) = \sum_{\gamma\in \mathcal M_s([n], [m])} \prod_{1 \leq i < j \leq n} u(x_i,x_j)^{m_{ij}(\gamma)}. 
\ees
$F_{nm}$ vanishes unless $2 \leq n \leq 2m$. Each $F_{nm}$ is integrable with respect to $\lambda_z^n$ by the assumption on $u$. Therefore 
\bes
	\Bigl( \int_{\mathbb X^2} u \dd \eta^{(2)} \Bigr)^m = \sum_{n=2}^{2m}\frac{1}{n!} \int_{\mathbb X^n} F_{nm}\dd \eta^{(n)}
\ees
with integrals that are almost surely absolutely convergent, and 
\bes
	\mathbb E\Bigl[ \Bigl( \int_{\mathbb X^2} u \dd \eta^{(2)} \Bigr)^m \Bigr] 
		 = \sum_{n=2}^{2m} \frac{1}{n!} \int_{\mathbb X^n} F_{nm}(\vect x) \dd \lambda_z^n(\vect x). 
\ees
This proves the representation for the moments and shows that the moments of order $s \leq m$ of $\int_{\mathbb X^2} |u| \dd \eta^{(2)}$ are finite. 

For the cumulants, we use the multiplicativity of the graph weights $\bar w(\gamma;\vect x) = \prod_{\{i,j\}} u(x_i,x_j)^{m_{ij}(\gamma)}$: Each $\gamma\in \mathcal M_s([n],[m])$ decomposes into connected multigraphs $\gamma_i\in \mathcal M_c(V_i,A_i)$, $i=1,\ldots,r$, where $\{V_1,\ldots,V_r\}$ and $\{A_1,\ldots,A_r\}$ form set partitions of $[n]$ and $[m]$, and the weight of $\gamma$ is the product of the weights of the connected components. Therefore 
\bes
	\sum_{\gamma\in \mathcal M_s([n],[m])} \bar w(\gamma;x_1,\ldots,x_n) = \sum_{r=1}^m \sum_{\substack{\{V_1,\ldots,V_r\}\in \mathcal P_n\\ \{A_1,\ldots,A_r\}\in \mathcal P_m}}  \prod_{\ell=1}^r \Bigl( \sum_{\gamma_\ell\in \mathcal M_c(V_\ell,A_\ell)} \bar w(\gamma_\ell;\vect x_{V_\ell})\Bigr)
\ees
and, formally,
\begin{align}
	& \mathbb E\Bigl[ \Bigl( \int_{\mathbb X} u \dd \eta^{(2)} \Bigr)^m \Bigr] \notag \\
	&\quad  =\sum_{n=2}^{2m} \frac{1}{n!} \sum_{r=1}^n \sum_{\{A_1,\ldots,A_r\}\in \mathcal P_m} \sum_{\substack{n_1,\ldots,n_r\in \N_0:\\n_1+\cdots + n_r = n}} \binom{n}{n_1,\ldots,n_r} \sum_{\gamma \in \mathcal M_c([n_\ell],A_\ell)} \int_{\mathbb X^{n_\ell}} \bar w(\gamma;\vect x) \dd \lambda_z^{n_\ell}(\vect x) \notag \\
	&\quad = \sum_{r=1}^n \sum_{\{A_1,\ldots,A_r\}\in \mathcal P_m}  \prod_{\ell=1}^r \Bigl( \sum_{n_\ell=2}^{2 \#A_\ell} \frac{1}{n_\ell!}\int_{\mathbb X^{n_\ell}} \sum_{\gamma\in \mathcal M_c([n_\ell],A_\ell)}\bar w(\gamma;\vect x) \dd \lambda_z^{n_\ell}(\vect x)\Bigr) \notag \\
	&\quad = \sum_{r=1}^n \sum_{\{A_1,\ldots,A_r\}\in \mathcal P_m} \kappa_{\#A_1}\cdots \kappa_{\#A_r} \label{eq:cum} 
\end{align} 
with $\kappa_s$ defined by the equation in Proposition~\ref{prop:stochmoment}. The identity is formal for now because we need to check the convergence of the integrals in $\kappa_\ell$. But the convergence can be checked by induction over $s\in \{1,\ldots,m\}$, using that the moments $\mathbb E[(\int_{\mathbb X^2} |u|\dd \eta^{(2)})^s]$ are finite and that Eq.~\eqref{eq:cum} holds true if we replace $m$ by any $s\leq  m$. The set of equations~\eqref{eq:cum} with $s\leq m$ instead of $m$ is exactly the set of equations satisfied by the cumulants (see, for example,~\cite{peccati-taqqu2011book})  and the solution of the set of equations is unique, so we identify $\kappa_1,\ldots,\kappa_s$ as the first $s$ cumulants. 
\end{proof} 

\begin{proof}[Proof of Proposition~\ref{prop:cumulants-diagrams}]
	From Proposition~\ref{prop:stochmoment} and the general observation $\kappa_m(X) = 2^m \kappa_m(\frac12 X)$ we get
	\be \label{eq:diagram1}
		\kappa_m\Bigl( \int_{\mathbb X^2} u \dd \eta^{(2)}\Bigr) = 
		\sum_{n=2}^{2m} \frac{2^m}{n!}\int_{\mathbb X^n} \sum_{\gamma \in \mathcal M_c([n],[m])}\prod_{1 \leq i < j \leq n} u(x_i,x_j)^{m_{ij}(\gamma)}
	\dd \lambda^n_z.
	\ee
	For $\gamma\in \mathcal M_c([n],[m])$, let $S^\gamma := \{ (v,a)\in [n]\times [m]\mid v\in \gamma(a)\}$. Let $\pi^\gamma$ be the partition of $S^ \gamma$ with blocks $B_a^\gamma = \{ (v,a) \mid v\in \gamma(a)\}$, $a=1,\ldots,m$, and $\sigma^\gamma$ the partition with blocks $T^\gamma_v = \{(v,a)\mid a \in A: \, \gamma(a)\ni v\}$, $v=1,\ldots,n$.  We have already observed that $(\pi^\gamma,\sigma^\gamma)$ is non-flat and connected. The partition $\pi^\gamma$ has $m$ blocks of cardinality $2$ each, and $S^\gamma$ has cardinality $2m$, so it is natural to map $S^\gamma$ to $\{1,\ldots, 2m\}=S_m$ and work with pairs of partitions on $S_m$ instead. 
	
	In order to do this in a systematic way, we note that there are exactly $2^m$ ways of choosing a total order $\prec$ on $S^\gamma$ that respects the ordering of the blocks $B^\gamma_1,\ldots,B^\gamma_m$, i.e., such that for all $1\leq i <j \leq m$ and $s_i\in B^\gamma_i$, $s_j\in B^\gamma_j$, we have $s_i \prec s_j$. Indeed, the only degree of freedom we have is choosing the order within blocks. Since there are $m$ blocks of cardinality $2$ each, the overall number of choices is $2^m$. 
	
	Given an order $\prec$, we define $\varphi_{\prec}: S^\gamma \to \{1,2,\ldots,2m\}= S_m$ as the unique order-preserving bijection. Clearly $\varphi_\prec(B_1) = \{1,2\}$, $\varphi_{\prec} (B_2) = \{3,4\}$, etc. Put differently, $\varphi_{\prec}$ maps the partition $\pi^\gamma$ of $S^\gamma$ to the partition $\{\{1,2\}, \{3,4\}, \ldots, \{2m-1,2m\}\} = \pi_m$ of $S_m$. Let $\hat \sigma := (\varphi_{\prec}(T^\gamma_1),\ldots,\varphi_{\prec}(T^\gamma_n))$ be the ordered partition of $S_m$ obtained from $\sigma$ and the map $\varphi_{\prec}$. Let $\sigma$ be the underlying non-ordered set partition of $S_m$. It is easily checked that $(\pi_m,\sigma)$ is non-flat and connected. 
	
	Summarizing, consider the following two types of objects at fixed $n$ and $m$: 
	\begin{itemize} 
		\item pairs $(\gamma, \prec)$ consisting of a multigraph $\gamma\in \mathcal M_c([n],[m])$ and a total order $\gamma$ on $S^\gamma$ that preserves the order of the blocks $B^\gamma _1,\ldots,B^\gamma_m$;
		\item ordered set partitions $\hat \sigma = (T_1,\ldots, T_n)$ of $S_m$ with $n$ blocks such that $(\pi_m,\sigma)$ is non-flat and connected. 
	\end{itemize} 
	We have defined a systematic assignment $(\gamma,\prec)\mapsto \hat \sigma$, a careful examination shows that this is in fact a one-to-one correspondence. 
	
	Let us come back to Eq.~\eqref{eq:diagram1}. Write $\sideset{}{^n}\sum_{(\gamma, \prec)}$ and $\sideset{}{^n}\sum_{\hat \sigma}$ for sums over the types of objects introduced above. 	
	As there are $2^m$ choices of order $\prec$, we may rewrite Eq.~\eqref{eq:diagram1} as 
	\bes
		\kappa_m\Bigl( \int_{\mathbb X^2} u \dd \eta^{(2)}\Bigr) = 
		\sum_{n=2}^{2m} \frac{1}{n!}\sideset{}{^n}\sum_{(\gamma, \prec)} \int_{\mathbb X^n} \prod_{1 \leq i < j \leq n} u(x_i,x_j)^{m_{ij}(\gamma)}
	\dd \lambda^n_z(\vect x).
	\ees	
	The integral can be rewritten with the help of the ordered partition $\hat \sigma$ associated with $(\gamma,\prec)$  as 
	\bes	
			 \int_{\mathbb X^n} \prod_{1 \leq i < j \leq n} u(x_i,x_j)^{m_{ij}(\gamma)}
	\dd \lambda^n_z(\vect x) = \int_{\mathbb X^n} (u\otimes\cdots\otimes u)_{\hat \sigma}\, (x_1,\ldots,x_n) \dd \lambda_z^ n(\vect x),
	\ees
	hence 
	\bes
		\kappa_m\Bigl( \int_{\mathbb X^2} u \dd \eta^{(2)}\Bigr) = 
		\sum_{n=2}^{2m} \frac{1}{n!}\sideset{}{^n}\sum_{\hat \sigma}
			\int_{\mathbb X^n} (u\otimes\cdots\otimes u)_{\hat \sigma}\, (x_1,\ldots,x_n) \dd \lambda_z^ n(\vect x).
	\ees
	To conclude, we note that for a given set partition $\sigma$ of $n$ blocks, each of the $n!$ ordered set partitions $\hat \sigma$ gives rise to an integral with exactly the same value, so we may  replace the sum over ordered partition $\hat \sigma$ by a sum over non-ordered partitions $\sigma$ and drop the prefactor $1/n!$. The proposition follows. 
\end{proof}

%%%%%%%%%%%%%%%%

\appendix

\section{Proof of Lemma~\ref{lem:correp} and Lemma~\ref{lem:ks}} \label{app:knownstuff}

\begin{proof} [Proof of Lemma~\ref{lem:correp}]
	Let $\eta^{(m)}$ be the $m$-th factorial measure of $\eta$, i.e., if $\eta = \sum_{j=1}^\kappa \delta_{x_j}$. Set 
	\bes
		\e^{-H(x_1,\ldots,x_m\mid \eta)} = \prod_{1\leq i<j \leq m}(1+f(x_i,x_j)) \prod_{i=1}^m \prod_{y\in \mathbb X} (1+ f(x_i,y))^{\eta(\{y\})}.
	\ees
   An induction over $m$, starting from~\eqref{eq:gnz}, shows for all $m\in \N$ and all measurable $F:\mathbb X^m\times \mathcal N\to \R_+$, we have 
	\begin{multline} \label{eq:mgnz} \tag{MGNZ}
		\mathsf E\Bigl[ \int_{\mathbb X^m} F(\vect x;\eta) \dd \eta^{(m)}(\vect x)\Bigr] 
		= \int_{\mathbb X^m} \mathsf E\Bigl[ F(\vect x;\eta + \delta_{x_1}+\cdots +\delta_{x_m}) \e^{-H(\delta_{x_1}+\cdots + \delta_{x_m}\mid \eta)} \Bigr] \dd \lambda_z^m(\vect x)
	\end{multline} 
	For $F(\vect x,\eta) = g(\vect x)$ with $g:\mathbb X^m\to \R_+$, we obtain 
	\be
		\int_{\mathbb X^m} g\dd \alpha_m = \mathsf E\Bigl[ \int_{\mathbb X^m} g \dd \eta^{(m)}\Bigr] = \int_{\mathbb X^m}\mathsf E\Bigl[\e^{-  H(x_1,\ldots,x_m\mid \eta)} \Bigr]  g(\vect x) \dd \lambda_z^m(\vect x).
	\ee
	It follows that the factorial moment measure $\alpha_m$ is absolutely continuous with respect to $\lambda^m$ with Radon-Nikod{\'y}m derivative 
	\bes
		\rho_m(\vect x) = \frac{\dd\alpha_m}{\dd \lambda^m}(\vect x) =\prod_{j=1}^m z(x_j) \mathsf E\Bigl[\e^{- H(x_1,\ldots,x_m\mid \eta)}\Bigr] \qquad \lambda^m\text{-a.e.} \qedhere
	\ees
\end{proof} 

\begin{proof}[Proof of Lemma~\ref{lem:ks}] Let $H$ be as in the proof of Lemma~\ref{lem:correp}, 	We decompose 
\bes
	H(x_0\cdots x_{n}\mid \eta) = W(x_0;x_1,\ldots,x_n) + W(x_0\mid \eta) + H(x_1,\ldots,x_n\mid \eta)
\ees
and obtain from Lemma~\ref{lem:correp} that
\begin{align*}
	\rho_{n+1}(x_0,x_1,\ldots,x_n) & = z (x_0)\e^{-\beta W(x_0;x_1,\ldots,x_n)}\mathsf E\Bigl[\prod_{j=1}^n z(x_j) \e^{-\beta [W(x_0;\eta)+H(x_1,\ldots,x_n\mid \eta)]} \Bigr] 
\end{align*} 
We can further expand 
\be \label{eq:ksaux}
	\e^{- W(x_0;\eta)} = 1+ \sum_{k=1}^\infty \frac{1}{k!}\int_{\mathbb X^k} \prod_{j=1}^k     f(x_0,y_j) \dd \eta^{(k)}(\vect y),
\ee
which is absolutely convergent for $\mathsf P$-almost all $\eta\in \mathcal N$ because of 
\begin{multline} \label{eq:ksaux2}
	 \mathsf E\left[1 + \sum_{k=1}^\infty \frac{1}{k!}\int_{\mathbb X^k} \left|\prod_{j=1}^k f(x_0,y_j) \right| \dd \eta^{(k)}(\vect y)\right]  \\
	 = 1+ \sum_{k=1}^\infty \frac{1}{k!}\int_{\mathbb X^k} \left|\prod_{j=1}^kf(x_0,y_j)) \right| \rho_{k}(\vect y)\dd \lambda^k (\vect y) 
	 \leq \exp\left(\int_\mathbb X |f(x_0,y)| \dd\lambda_z(y) \right) < \infty. 
\end{multline} 
In the last line we have used the integrability assumption on $f$ and the bound  $\rho_k(y_1,\ldots,y_k)\leq \prod_{j=1}^k z(y_j)$, which follows from Lemma~\ref{lem:correp} and $1+ f\leq 1$.   Applying~\eqref{eq:mgnz} we get 
\begin{align*}
& 	\mathsf E\Bigl[ \prod_{j=1}^n z(x_j) \int_{\mathbb X^k} \prod_{j=1}^k f(x_0,y_j) \e^{-H(x_1,\ldots,x_n\mid \eta)} \dd \eta^{(k)}(\vect y) \Bigr] \\
&\qquad = \int_{\mathbb X^k} \mathsf E\Bigl[ \prod_{j=1}^kf(x_0,y_j) z^{n+k} \e^{-[H(x_1,\ldots,x_n\mid \delta_{y_1}+\cdots + \delta_{y_k}+  \eta) + H(y_1,\ldots,y_k\mid \eta)]} \Bigr]  \prod_{j=1}^n z(x_j)  \prod_{j=1}^k z(y_j)\dd \lambda^k(\vect y)\\
&\qquad = \int_{\mathbb X^k}\prod_{j=1}^k f(x_0,y_j) \mathsf E\Bigl[ \prod_{j=1}^n z(x_j)  \prod_{j=1}^k z(y_j)  \e^{- H(x_1,\ldots,x_n,y_1,\ldots,y_k\mid  \eta)} \Bigr] \dd \lambda^k(\vect y) \\
&\qquad = \int_{\mathbb X^k}\prod_{j=1}^k f(x_0,y_j) \rho_{n+k}(x_1,\ldots,x_n,\vect y) \dd \lambda^k(\vect y).  
\end{align*} 
Because of~\eqref{eq:ksaux2}, we can exchange summation and integration in~\eqref{eq:ksaux} and~\eqref{eq:ks} follows. 
\end{proof} 

\section{Existence of Gibbs measures} \label{app:existence} 

Remember that $\mathbb X$ is a complete metric separable space. 

\begin{theorem} \label{thm:existence}
	Let $z:\mathbb X\to \R_+$ be measurable and such that $\lambda_z(B)=\int_B z \dd \lambda<\infty$ for all $B\in \mathcal X_\mathsf b$. 
	Let $v:\mathbb X\times \mathbb X\to \R\cup\{\infty\}$ be a measurable pair potential. Assume that:
	\begin{itemize} 
		\item [(i)] $v(x,y)\geq 0$ for $\lambda_z^2$-almost all $(x,y)\in\mathbb X^2$. 
		\item [(ii)] The potential $v$ satisfies the integrability condition 
		\be \label{eq:integrability} \tag{$\mathsf I$}
	\forall x\in \mathbb X:\quad	\int_\mathbb X |f(x,y)|\,  \dd \lambda_z(y) < \infty,\quad\text{for $\lambda$-almost all }x\in \mathbb X. 
\ee
	where $f(x,y) = \e^{ - v(x,y)}-1$. 
	\end{itemize}
	Then $\mathscr G(z) \neq \varnothing$. 
\end{theorem}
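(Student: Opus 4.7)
The plan is to construct $\mathsf P \in \mathscr G(z)$ as a subsequential vague limit of the finite-volume Gibbs measures $\mathsf P_{\Lambda_n}$ from Section~\ref{sec:fivo}, built along an exhaustion $\Lambda_n\nearrow \mathbb X$ by bounded Borel sets. Each $\mathsf P_{\Lambda_n}$ is well-defined because $\e^{-H_k}\leq 1$ forces $\Xi_{\Lambda_n}(z) \leq \e^{\lambda_z(\Lambda_n)} < \infty$, and the explicit Radon-Nikod\'ym formula of $\mathsf P_{\Lambda_n}$ relative to the Poisson point process of intensity $z\1_{\Lambda_n}\dd \lambda$ immediately yields a finite-volume version of~\eqref{eq:gnz} satisfied by $\mathsf P_{\Lambda_n}$.

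The first step is to establish tightness of $(\mathsf P_{\Lambda_n})_{n\in \N}$ in the vague topology on $\mathcal N$. By the non-negativity of $v$ and the finite-volume analogue of Lemma~\ref{lem:correp}, the factorial moment densities satisfy the Poissonian bound $\rho_{k,\Lambda_n}(x_1,\ldots,x_k)\leq \prod_{i=1}^k z(x_i)$ uniformly in $n$, so for every $B\in \mathcal X_\mathsf b$ the random variable $\eta(B)$ is stochastically dominated under $\mathsf P_{\Lambda_n}$ by a Poisson variable of parameter $\lambda_z(B)$. Choosing an exhaustion by bounded sets $B_\ell\nearrow \mathbb X$ and sufficiently large integers $M_\ell$, Markov's inequality then produces vaguely relatively compact subsets $\{\eta : \eta(B_\ell)\leq M_\ell \text{ for all } \ell\}$ of $\mathcal N$ carrying uniformly high $\mathsf P_{\Lambda_n}$-mass, which gives tightness.

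Extract a weakly convergent subsequence $\mathsf P_{\Lambda_{n_k}}\to \mathsf P$. It remains to verify $\mathsf P\in \mathscr G(z)$, i.e.\ to pass to the limit in~\eqref{eq:gnz}. A monotone class argument reduces this to testing~\eqref{eq:gnz} against $F(x,\eta) = f(x)\, g(\eta)$ with $f\in C_c(\mathbb X)$ and $g$ a bounded, vaguely continuous cylindrical function (for instance finite products of factors $\e^{-\int_\mathbb X h_j \dd \eta}$ with $h_j\in C_c(\mathbb X)$). The left-hand side of~\eqref{eq:gnz} is then continuous and bounded in $\eta$, so it converges along the subsequence. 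On the right-hand side, the non-negativity of $v$ supplies the uniform bound $\e^{-W(x;\eta)}\leq 1$, and together with~\eqref{eq:integrability} this controls the $x$-integral, setting up a dominated convergence argument.

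The main obstacle is that $\eta\mapsto \e^{-W(x;\eta)}$ is typically not vaguely continuous, because $v(x,\cdot)$ may be discontinuous and can take the value $+\infty$ (hard-core interactions). Handling this requires a careful approximation: I would first truncate to $v_M := v\wedge M$, in which case $\e^{-v_M}$ is bounded and a mild regularization makes the argument above close directly, producing $\mathsf P^{(M)}\in \mathscr G(z)$ for the pair interaction $v_M$; then let $M\to \infty$, exploiting that the right-hand integrand of~\eqref{eq:gnz} is monotone non-increasing in $M$ together with the uniform tightness inherited from the Poissonian correlation bound (which is independent of $M$), to extract a further limit $\mathsf P$ and verify~\eqref{eq:gnz} for the original $v$ by monotone convergence. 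A slicker alternative is to invoke the Portmanteau theorem: $\eta\mapsto \e^{-W(x;\eta)}$ is continuous at any $\eta$ not charging the discontinuity set $D_x$ of $v(x,\cdot)$, and the bound $\rho_1\leq z$ ensures $\mathsf P(\eta(D_x)>0)=0$ whenever $D_x$ is $\lambda$-negligible. Either route, combined with~\eqref{eq:integrability} and $\e^{-W}\leq 1$, delivers~\eqref{eq:gnz} for $\mathsf P$ and completes the proof.
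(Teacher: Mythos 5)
Your overall architecture (finite-volume measures with empty boundary conditions, a uniform Poissonian bound on correlation functions, extraction of a subsequential limit, passage to the limit in \eqref{eq:gnz}) is the same as the paper's, which however organizes the compactness step differently: instead of Prokhorov tightness in the vague topology it extracts weak$^*$ limits of the correlation functions in $L^\infty$ via Banach--Alaoglu, converts them to Janossy densities, and assembles the limit measure by Kolmogorov consistency (Lemma~\ref{lem:subsequence}). Your tightness step is essentially sound, although ``stochastically dominated by a Poisson variable'' is not justified by matching factorial moments alone (the first-moment bound $\mathsf E_{\Lambda_n}[\eta(B)]\leq \lambda_z(B)$ already suffices for tightness, and the second factorial moment bound gives the uniform integrability you need on the left-hand side of \eqref{eq:gnz}, where your claim that $\eta\mapsto g(\eta)\int f\dd\eta$ is bounded is false but harmless).

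The genuine gap is in the limit passage on the right-hand side of \eqref{eq:gnz}, and it is not the one you identify. You treat the failure of continuity of $\eta\mapsto \e^{-W(x;\eta)}$ as coming from the discontinuity or unboundedness of $v$, and propose truncation $v\wedge M$ or a Portmanteau argument on the discontinuity set $D_x$. But the real obstruction is \emph{non-locality}: $W(x;\eta)=\int_{\mathbb X} v(x,y)\dd\eta(y)$ integrates over all of $\mathbb X$, and vague convergence $\eta_n\to\eta$ gives no control over the contribution of points of $\eta_n$ outside bounded sets. Even for a bounded, everywhere continuous $v$ of unbounded range, $\eta\mapsto W(x;\eta)$ is not vaguely continuous at any configuration, so neither the truncation $v\wedge M$ (which does not shrink the range) nor the Portmanteau criterion (whose continuity claim is simply false without compact support of $v(x,\cdot)$) closes the argument; the phrase ``a mild regularization makes the argument above close directly'' is precisely where the work is hidden. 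What is needed is a localization of $\e^{-W(x;\eta)}$: the paper's Lemma~\ref{lem:gnzconvergence} expands $\e^{-W(x;\eta)}$ as in \eqref{eq:ksaux}, truncates the integrals to a bounded set $B$ to obtain local, exponentially bounded approximants $G_B$, and controls the error $\mathsf E_n[|G-G_B|]$ \emph{uniformly in $n$} using the bound $\bar\rho_k^{(n)}\leq 1$ together with the integrability condition \eqref{eq:integrability} (through the class $\mathcal X_\mathsf b^*$ of \eqref{eq:regular-delta}). This is the only place where \eqref{eq:integrability} enters in an essential way, and your proposal never actually uses it for this purpose; without such a uniform localization estimate the proof does not go through.
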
 

\noindent The theorem is a variant of results by Ruelle~\cite{ruelle1970superstable} and Kondratiev and Kuna~\cite{kuna-kondratiev2003}, for the reader's convenience we include a complete proof.  Ruelle deals with translationally invariant, superstable, possibly negative interactions and simple point processes in $\R^d$. Kondratiev and Kuna work with simple point processes in more general spaces $\mathbb X$ with superstable, possibly non-negative interactions without translational invariance, under the slightly more restrictive integrability condition $\mathrm{ess\,sup}_{x\in \mathbb X}\int_\mathbb X|f(x,y)|\dd \lambda_z(y)<\infty$ (essential supremum with respect to $\lambda_z$).  The proof follows a standard scheme: 
\begin{enumerate}
	\item Choose a topology on the space  of probability measures  on $(\mathcal N,\mathfrak N)$. The choice involves trade-offs discussed at the beginning of \cite[Chapter 4]{georgii-book}. For point processes with locally finite intensity measure, a good choice is the topology $\mathcal \tau_\mathscr L$ of local convergence from~\cite{georgii-zessin1993}. 
	\item Show that the collection of finite-volume Gibbs measures is sequentially relatively compact, i.e., every sequence admits a convergent subsequence. Some criteria for relative compactness are: Ruelle's moment bounds for correlation functions~\cite{ruelle1970superstable,kuna-kondratiev2003}, entropy bounds~\cite{georgii-zessin1993,dereudre-drouilhet-georgii2012}, existence of Lyapunov functionals~\cite{kondratiev-pasurek-roeckner2012,conache-daletskii-kondratiev-pasurek2018}. Relative compactness is often easy for non-negative or locally stable interactions but can be difficult to prove for stable attractive interactions that favor accumulation of many points in small regions. 
	\item Show that every accumulation point of finite-volume Gibbs measures is an infinite-volume Gibbs measure. This is done by passing to the limit in one of the structural equations characterizing the set of Gibbs measures; usually, the DLR conditions or Kirkwood-Salsburg equations. The passage to the limit requires additional justification because the structural equations may involve functions that are not local (e.g., when the potential has infinite range).  \emph{Quasi-local specifications} provide  conceptual framework particularly helpful for lattice systems~\cite{georgii-book,preston-book}. 
 Additional conditions on the interaction potential are needed to ensure that all functions involved can be approximated by local functions. Typically this involves some decay of interactions, formulated as pointwise bounds or as an integrability condition similar to~\eqref{eq:integrability}~\cite{ruelle1969book,ruelle1970superstable}. 
\end{enumerate} 

\noindent We introduce some additional notation.  For $\eta \in \mathcal N$ and 
$\Delta\in \mathcal X$, let $\eta_\Delta$ be the restriction of $\eta$ to $\Delta$, i.e., 
$$
	\eta_\Delta(A) = \eta(A\cap \Delta) \qquad (A \in \mathcal X). 
$$
A function $h:\mathcal N\to \R$ is \emph{local} if there exists $\Delta \in \mathcal X_\mathsf b$ such that $h$ is \emph{$\Delta$-local}, i.e., 
$
	h(\eta) = h(\eta_\Delta)
$
for all $\eta\in \mathcal N$. The \emph{algebra of local events} is 
$$
	\mathcal A = \{  A\in \mathfrak N\mid \1_A \text{ is local}\}.
$$
Notice that the indicator of a measurable set $A\in \mathfrak N$ is $\Delta$-local if and only if it is in the $\sigma$-algebra $\mathfrak N_\Delta$ generated by the counting variables $\eta\mapsto \eta(B)$ with measurable $B \subset \Delta$, thus $\mathcal A = \bigcup_{\Delta\in \mathcal X_\mathsf b} \mathfrak N_\Delta$. 
A function $h:\mathcal N\to \R$ is \emph{exponentially bounded} if there exist 
$\Delta\in \mathcal X_\mathsf b$ and $C,c>0$ such that 
$$
	|h(\eta)|\leq C \exp(c\, \eta(\Delta))
$$
for all $\eta \in \mathcal N$, and
\emph{tame} if instead $|h(\eta)|\leq C(1+ \eta(\Delta))$. Every exponentially bounded function is tame. Let $\mathscr L$ be the class of measurable tame local functions $h:\mathcal N\to \R$ and $\mathscr P$ the set of probability measures $\mathsf P$ on $(\mathcal N,\mathfrak N)$ with locally finite intensity, i.e., $\mathsf E[\eta(\Delta)]<\infty$ for all $\Delta\in \mathcal X_\mathsf b$. The \emph{topology $\tau_\mathscr{L}$ of local convergence} is the smallest topology on $\mathscr P$ for which the maps $e_h:\, P\mapsto \int_\mathcal N h\, \dd P$, $h \in \mathscr L$, are continuous. 

Next let $(\Lambda_n)_{n\in \N}$ be an increasing sequence of non-empty bounded measurable sets such that every bounded set $B$ is eventually contained in some $\Lambda_n$ (the sequence is \emph{cofinal} \cite[Chapter 1.2]{georgii-book}). Write $\mathsf P_n$ for the finite-volume Gibbs measure in $\Lambda_n$ with empty boundary conditions and activity $z$. Thus $\mathsf P_n$ is absolutely continuous with respect to the law of the Poisson point process with intensity measure $\1_{\Lambda_n}\,\lambda_z $, and the Radon-Nikod{\'y}m derivative is proportional to $\exp( - H(\eta))$. 

\begin{lemma} \label{lem:subsequence}
	The sequence $(\mathsf P_n)_{n\in \N}$ admits a subsequence $(\mathsf P_{n_r})_{r\in \N}$  such that for some probability measure $\mathsf P$ on $(\mathcal N,\mathfrak N)$ and all measurable, local, exponentially bounded $h:\mathcal N\to \R$, we have 
	$$
		\lim_{r\to \infty} \int_\mathcal N h\, \dd \mathsf P_{n_r} = \int_\mathcal N h \, \dd \mathsf P.
	$$
\end{lemma}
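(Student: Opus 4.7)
My plan combines a uniform exponential moment bound (forced by $v\geq 0$) with a diagonal tightness-and-extraction argument along a cofinal exhaustion of $\mathbb X$, identifies the limit via Kolmogorov's extension theorem, and bootstraps from bounded cylindrical functionals to exponentially bounded local functions via uniform integrability. First, the finite-volume analogue of Lemma~\ref{lem:correp}, obtained by a direct computation exploiting $v\geq 0$ (so that $e^{-H_m}\leq 1$ and the partition function is monotone non-increasing upon adding points), yields the pointwise bound $\rho_{m,\Lambda_n}(x_1,\ldots,x_m)\leq\prod_{j=1}^{m}z(x_j)$. Hence, for every $\Lambda\in\mathcal X_\mathsf b$,
\[
\mathsf E_n\!\left[\binom{\eta(\Lambda)}{m}\right]=\frac{1}{m!}\int_{\Lambda^m}\rho_{m,\Lambda_n}\,\dd\lambda^m \leq \frac{\lambda_z(\Lambda)^m}{m!},
\]
and summing $(1+s)^{k}=\sum_m\binom{k}{m}s^m$ against $\mathsf P_n$ gives the Poisson-type bound
$\sup_{n} \mathsf E_n[e^{c\eta(\Lambda)}]\leq \exp((e^c-1)\lambda_z(\Lambda))$ for every $c\geq 0$ and every $\Lambda\in\mathcal X_\mathsf b$, the central tool for both compactness and uniform integrability.

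Next I would extract a subsequential limit. For fixed $\Lambda\in\mathcal X_\mathsf b$, the bound $\mathsf E_n[\eta(\Lambda)]\leq\lambda_z(\Lambda)$ together with inner regularity of $\lambda_z$ (guaranteed by $\mathbb X$ being Polish and $\lambda_z$ locally finite) provides, for any $\epsilon>0$, an $N\in\N$ and a compact $K\subset\Lambda$ such that the vaguely relatively compact set $\{\eta:\eta(\Lambda)\leq N,\ \eta(\Lambda\setminus K)=0\}$ has $\mathsf P_n$-probability at least $1-\epsilon$ uniformly in $n$. Fixing a cofinal exhaustion $(\Delta_j)_{j\in\N}\subset\mathcal X_\mathsf b$ of $\mathbb X$, Prohorov's theorem on the Polish space $\mathcal N_{\Delta_j}$ of finite counting measures in $\Delta_j$, combined with a diagonal extraction, produces a subsequence $(\mathsf P_{n_r})_r$ such that $\mathsf P_{n_r}|_{\mathfrak N_{\Delta_j}}\Rightarrow\mathsf Q_j$ weakly for every $j$. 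Since the restriction maps $\mathcal N_{\Delta_{j+1}}\to\mathcal N_{\Delta_j}$ are continuous, the family $(\mathsf Q_j)_j$ is projectively consistent, and Kolmogorov's extension theorem on this Polish projective system yields a unique probability measure $\mathsf P$ on $(\mathcal N,\mathfrak N)$ with $\mathsf P|_{\mathfrak N_{\Delta_j}}=\mathsf Q_j$ for every $j$.

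Finally, I would upgrade to all local exponentially bounded $h$ with $|h(\eta)|\leq Ce^{c\eta(\Delta)}$. Picking $j$ with $\Delta\subset\Delta_j$ and truncating $h_M=(h\wedge M)\vee(-M)$, the exponential moment bound at exponent $2c$ together with Cauchy--Schwarz yields $\sup_n \mathsf E_n[|h|\mathbf 1_{\{|h|>M\}}]\to 0$ as $M\to\infty$; Fatou transfers the same exponential bound to $\mathsf P$, so the truncation error is uniformly negligible on both sides. For the bounded piece $h_M$, weak convergence on $\mathcal N_{\Delta_j}$ delivers $\int h_M\,\dd\mathsf P_{n_r}\to\int h_M\,\dd\mathsf P$ for continuous bounded $h_M$, and a monotone-class argument based on a countable $\pi$-system of $\mathsf Q_j$-continuity Borel subsets of $\Delta_j$ extends this to all bounded $\mathfrak N_{\Delta_j}$-measurable $h_M$. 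The hard part will be precisely this last step: the cylindrical functionals $\eta\mapsto g(\eta(A_1),\ldots,\eta(A_k))$ that naturally generate the local $\sigma$-algebras are discontinuous in the vague topology at configurations with a point on some $\partial A_i$, so weak convergence does not directly deliver convergence of their expectations. One must work with a carefully chosen countable $\pi$-system of $\mathsf Q_j$-continuity sets generating $\mathfrak N_{\Delta_j}$ and then invoke Dynkin's $\pi$--$\lambda$ theorem. A technically sleeker alternative would be to operate directly in the topology $\tau_\mathscr L$ of local convergence and invoke a Georgii--Zessin compactness criterion, amply verified by our uniform exponential moments, with the upgrade to exponentially bounded test functions then following from dominated convergence.
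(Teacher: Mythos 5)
Your extraction scheme diverges from the paper's and, as written, the final step does not close. The lemma demands $\int h\,\dd\mathsf P_{n_r}\to\int h\,\dd\mathsf P$ for \emph{all} measurable local exponentially bounded $h$, i.e.\ convergence in the topology $\tau_\mathscr L$ of local convergence, which is strictly stronger than weak convergence of the restrictions to $\mathcal N_{\Delta_j}$. Your proposed repair of the discontinuity problem --- a countable $\pi$-system of $\mathsf Q_j$-continuity sets plus Dynkin's $\pi$--$\lambda$ theorem --- cannot deliver this. The collection of Borel sets $A\subset\mathcal N_{\Delta_j}$ for which $\mathsf P_{n_r}(A)\to\mathsf Q_j(A)$ is closed under complements but is \emph{not} a $\lambda$-system: for an increasing union $A=\bigcup_k A_k$ one would need to interchange $\lim_r$ with $\lim_K \mathsf P_{n_r}(\bigcup_{k\le K}A_k)$, which requires a uniformity in $r$ that weak convergence does not supply. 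This is the standard obstruction to upgrading weak convergence to setwise convergence, and no choice of generating $\pi$-system circumvents it; indeed weak limits genuinely can fail to converge setwise. Your fallback (Georgii--Zessin compactness in $\tau_\mathscr L$) is the right kind of statement but that criterion rests on specific entropy bounds, not on the exponential moment bounds you establish, so invoking it is not a one-line fix either.

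The paper's proof gets around exactly this point by exploiting that each $\mathsf P_n$ has correlation functions, and hence Janossy densities on any $\Delta\in\mathcal X_\mathsf b$, that are uniformly bounded by $1$ with respect to $\lambda_z^{\otimes k}$ (this is where $v\ge 0$ enters, the same fact underlying your moment bound). One then extracts a weak$^*$-convergent subsequence of these densities in $L^\infty(\lambda_z^k)$ via Banach--Alaoglu and a diagonal argument; pairing against the $L^1$ functions $\vect x\mapsto h(\delta_{x_1}+\cdots+\delta_{x_k})$ handles arbitrary \emph{measurable} local $h$, with the exponential bound and $\lambda_z(\Delta)<\infty$ giving the domination needed to sum over $k$. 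The limit densities are then assembled into consistent finite-volume laws $\mathsf Q_\Delta$ and extended by Kolmogorov, much as in your last step. So your skeleton (uniform bounds, diagonal extraction, consistency, extension) is sound, but the compactness must be taken at the level of the uniformly bounded densities in $L^\infty$ rather than at the level of the measures in the weak topology; that substitution is what makes the convergence hold for all measurable local test functions.
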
 

\noindent In particular, $(\mathsf P_{n_r})_{r\in \N}$ converges to $\mathsf P$ in the topology $\tau_\mathscr L$ of local convergence.

\begin{proof} 
	\emph{Step 1: Convergence of correlation functions.}
	For $k,n\in \N$, set
	\be\label{eq:correlationstilde}
		\bar \rho_k^{(n)}(x_1,\ldots,x_k):= \e^{- H_k(x_1,\ldots,x_k)} \, \mathsf E_n 	\Bigl[ \e^{- \sum_{i=1}^k W(x_i,\eta)}  \Bigr] \qquad (x_1,\ldots, x_k \in \mathbb X). 
	\ee
	By Lemma~\ref{lem:correp}, the function $\bar \rho_k^{(n)}$ is the Radon-Nikod{\'y}m derivative of the $k$-th factorial moment measure $\alpha_k^{(n)}$ with respect to $\lambda_z^{\otimes k}$. Because of $v\geq 0$, we have the pointwise inequality
		\be \label{eq:cobund}
			0 \leq \bar \rho_k^{(n)}\leq 1 \qquad \lambda_z^k\text{-a.e.}
		\ee 
	 for all $k,n\in \N$. For fixed $k\in \N$, we may view  $(\bar \rho_k^{(n)})_{n\in \N}$ as a bounded sequence in $L^\infty( \mathbb X^k, \mathcal X^{\otimes k}, \lambda_z^k)$. Since $L^\infty(\mathbb X^k,\mathcal X^{\otimes k}, \lambda_z^k)$ is the dual of the Banach space $L^1(\mathbb X^k,\mathcal X^{\otimes k}, \lambda_z^k)$, the Banach-Alaoglu theorem ensures the existence of a weak$^*$-convergent subsequence $(\bar \rho_{k}^{(n_r)})_{r\in \N} \stackrel{*}{\rightharpoonup} \bar \rho_k$, i.e., for all $f\in L^1(\mathbb X^k,\mathcal X^{\otimes k}, \lambda_z^k)$, we have 
	\be \label{eq:coconv}
		\lim_{r\to \infty} \int_{\mathbb X^k}f\, \bar \rho_k^{(n_r)} \dd \lambda_z^k = \int_{\mathbb X^k}f\, \bar \rho_k\, \dd \lambda_z^k,
	\ee
	moreover $0 \leq \bar \rho_k (\vect x) \leq 1$ for $\lambda_z^k$-almost all $\vect x \in \mathbb X^k$. A straightforward argument with diagonal sequences yields the existence of a subsequence $(\mathsf P_{n_r})_{r\in \N}$ such that the convergence~\eqref{eq:coconv} holds true for all $k\in \N$. Note, however, that we do not yet know that the functions $\bar \rho_k$ are factorial moment densities of some measure $\mathsf P$. \\
	
	\noindent\emph{Step 2: Convergence of Janossy densities and avoidance probabilities.} 	
	For $\Delta\in \mathcal X_\mathsf b$, $k,n\in \N$, and $(x_1,\ldots,x_k)\in \mathbb X^k$, define
	\be \label{eq:jadef}
		j_{k,\Delta}^{(n)} (x_1,\ldots, x_k) := \bar \rho_k^{(n)}(x_1,\ldots, x_k) + \sum_{m=1}^\infty\frac{(-1)^m}{m!} \int_{\Delta^m} \bar \rho_{k+m}^{(n)}(x_1,\ldots,x_k,y_1,\ldots,y_m) \dd \lambda_z^m(\vect y).
	\ee
	The right-hand side is absolutely convergent because of the pointwise inequality $0\leq \rho_\ell^{(n)} \leq 1$ and because of $\lambda_z(\Delta)<\infty$. The family $(j_{k,\Delta}^{(n)})_{k\in \N}$ forms a system of Janossy densities (with respect to $\lambda_z$) localized to $\Delta$ of $\mathsf P_n$ \cite[Eq.. (5.4.14)]{daley-verejones2003vol1}, consequently $j_{k,\Delta}(\vect x) \geq 0$ for $\lambda_z^k$-almost all $\vect x\in \Delta^k$.  For $k=0$, we define 
	$$
		j_{0,\Delta}^{(n)} := 1 + \sum_{m=1}^\infty\frac{(-1)^m}{m!} \int_{\Delta^m} \bar \rho_{m}^{(n)}(y_1,\ldots,y_m) \dd \lambda_z^m(\vect y)
	$$
	and note that $j_{0,\Delta}^{(n)} = \mathsf P_n( \eta(\Delta) =0)\geq 0$ is an avoidance probability~\cite[Example 5.4(a)]{daley-verejones2003vol1}.  Define $j_{k,\Delta}$ by formulas analogous to $j_{k,\Delta}^{(n)}$ but with $\bar \rho_k^{(n)}$ replaced by $\bar \rho_k$. It follows from~\eqref{eq:coconv} that
	\be \label{eq:jaconv}
		j_{0,\Delta}^{(n_r)}\to j_{0,\Delta},\quad j_{k,\Delta}^{(n_r)} \stackrel{*}{\rightharpoonup}  j_{k,\Delta} \quad (k\in \N)
	\ee
	as $r\to \infty$. (Here the weak$*$ convergence is with respect to the $L^\infty$ space with $\Delta^k$ instead of $\mathbb X^k$.)  Consequently the $j_{k,\Delta}$'s, $k\in \N$, and $j_{0,\Delta}$  are non-negative (up to null sets), however we do not yet know that they are the Janossy densities and avoidance probabilites, respectively, of some probability measure $\mathsf P$. \\
	
	\noindent \emph{Step 3: Convergence of expectations.} 	
	Let $h:\mathcal N\to \R$ be measurable, $\Delta$-local, and exponentially bounded, $|h(\eta)|\leq C \exp(c\, \eta(\Delta))$. Then the map $(x_1,\ldots,x_k)\mapsto h(\delta_{x_1}+\cdots + \delta_{x_k})$ from $\Delta^k\to \R$ is integrable against $\1_{\Delta^k} \lambda_z^k$, therefore by ~\eqref{eq:jaconv}, for all $k\in \N$, 
	$$
		\lim_{r\to \infty} \int_{\Delta^k} h(\delta_{x_1}+\cdots + \delta_{x_k}) j_{k,\Delta}^{(n_r)}(\vect x)\,  \dd \lambda_z^k(\vect x) = \int_{\Delta^k} h(\delta_{x_1}+\cdots + \delta_{x_k}) j_{k,\Delta}(\vect x)\,  \dd \lambda_z^k(\vect x). 
	$$
	Furthermore, by~\eqref{eq:jadef} and the bound~\eqref{eq:cobund},
	\begin{align*} 
		 \frac{1}{k!} \int_{\Delta^k}\bigl| h(\delta_{x_1}+\cdots + \delta_{x_k})\bigr|\, j_{k,\Delta}^{(n_r)}(\vect x) \,  \dd \lambda_z^k(\vect x)
		 	\leq \e^{\lambda_z(\Delta)} \, \frac{1}{k!} \bigl( C\,\e^c\, \lambda_z(\Delta)\bigr)^k.
	\end{align*} 
	The right-hand side is independent of $n_r$ and the sum over $k$ is finite, therefore we can exchange limits and summation over $k$ and obtain 
	\begin{align*} 
		\lim_{r\to \infty} \int_{\mathcal N} h \dd \mathsf P_n & = 
			\lim_{r\to \infty} \Biggl( h(0) j_{0,\Delta}^{(n_r)} + \sum_{k=1}^\infty \frac{1}{k!}\int_{\Delta^k} h(\delta_{x_1}+\cdots + \delta_{x_k}) j_{k,\Delta}^{(n_r)}(\vect x) \dd \lambda_z^k(\vect x) \Biggr)\\
			& = h(0) j_{0,\Delta} + \sum_{k=1}^\infty \frac{1}{k!}\int_{\Delta^k} h(\delta_{x_1}+\cdots + \delta_{x_k}) j_{k,\Delta}(\vect x) \dd \lambda_z^k(\vect x).
	\end{align*} 
	This applies in particular to the constant function $h(\eta) =1$, hence 
	$$
		1 =  j_{0,\Delta} + \sum_{k=1}^\infty \frac{1}{k!}\int_{\Delta^k} j_{k,\Delta}(\vect x) \dd \lambda_z^k(\vect x).
	$$
	
	\noindent \emph{Step 4: Kolmogorov consistency condition. Conclusion.} 
	Let $\mathcal N_\Delta = \{\eta \in \mathcal N\mid \eta(\mathbb X\setminus\Delta) = 0\}$ be the set of configurations supported in $\Delta$. Equip $\mathcal N_\Delta$ with the trace of the $\sigma$-algebra $\mathfrak N$. Let $\mathsf Q_\Delta$ be the uniquely defined probability measure on $\mathcal N_\Delta$ with Janossy densities $j_{k,\Delta}$, $k\in \N$. By Step 3, we have 
	$$
		\lim_{r\to \infty} \int_{\mathcal N} h \dd \mathsf P_n = \int_{\mathcal N_\Delta} h \dd \mathsf Q_\Delta
	$$ 
	for all $\Delta$-local, measurable, exponentially bounded $h$. If $\Delta_1\subset \Delta_2$ are two sets in $\mathcal X_\mathsf b$, then every function $h$ that is $\Delta_1$-local  is also $\Delta_2$-local, and we deduce the consistency property 
	$$
		\int_{\mathcal N_{\Delta_1}} h \dd \mathsf Q_{\Delta_1} = \int_{\mathcal N_{\Delta_2}} h \dd \mathsf Q_{\Delta_2} \quad\text{for all bounded measurable $\Delta_1$-local $h$}.
	$$
	Since $\mathbb X$ is Polish, it follows that there exists a uniquely defined measure $\mathsf P$ on $(\mathcal N,\mathfrak N)$ such that for all $\Delta\in \mathcal X_\mathsf b$, the image of $\mathsf P$ under the projection $\pi_\Delta:\mathcal  N\to \mathcal N_\Delta$, $\eta\mapsto \eta_\Delta$  is $\mathsf P\circ \pi_\Delta^{-1} = \mathsf Q_\Delta$, and the proof of the lemma is easily concluded. 
\end{proof} 

\noindent Next we check that we can pass to the limit in the structural equation defining $\mathscr G(z)$. As mentioned above, the passage to the limit is often done with DLR conditions or the Kirkwood-Salsburg equation, but can also be done in the GNZ equation, compare Zessin's construction for Papangelou kernels that satisfy a Feller property (reminiscent of the quasilocality of specifications)~\cite{zessin2009}. Let 
\be \label{eq:regular-delta}
	b(x):= \int_\mathbb X|f(x,y)|\dd \lambda_z(x), \quad \mathcal X_\mathsf b^*:=\Bigl\{\Delta\in \mathcal X_\mathsf b\,\Big|\, \int_\Delta \e^b \dd \lambda_z <\infty\Bigr \}. 
\ee

\begin{lemma}  \label{lem:gnzconvergence}
	Let $\mathsf P$ and $(\mathsf P_{n_r})_{r\in \N}$ be as in Lemma~\ref{lem:subsequence}.
	Let $F:\mathbb X\times \mathcal N\to \R$ be measurable and bounded. Assume that (i) $F(x,\cdot)$ is local, for each $x\in \mathbb X$, and (ii) there exists $\Delta\in \mathcal X_\mathsf b^*$ such that $F(\cdot, \eta)$ vanishes on $\mathbb X\setminus \Delta$, for all $\eta\in \mathcal N$.  Then 
	\begin{align*}
			\lim_{r\to \infty} \mathsf E_{n_r} \Bigl[\int_\mathbb X F(x,\eta) \dd\eta( x)\Bigr] & =  \mathsf E \Bigl[\int_\mathbb X F(x,\eta) \dd \eta( x)\Bigr],\\
		\lim_{r\to \infty} \int_\mathbb X  \mathsf E_{n_r} \Bigl[ \e^{- W(x,\eta)} F(x,\eta+\delta_x)\Bigr]  \dd \lambda_z(x) & = \int_\mathbb X \mathsf E \Bigl[ \e^{- W(x,\eta)} F(x,\eta+\delta_x)\Bigr] \dd \lambda_z(x).
	\end{align*} 
\end{lemma}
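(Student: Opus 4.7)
The plan is to treat the two convergences separately. Throughout I interpret the locality hypothesis (i) as uniform in $x$: there is a single $\Delta' \in \mathcal X_{\mathsf b}$ (which we may take to contain $\Delta$) such that $F(x,\cdot)$ is $\Delta'$-local for every $x \in \mathbb X$. This is the form in which the lemma is applied when verifying \eqref{eq:gnz} for $\mathsf P$. Under this interpretation the first convergence is immediate: $\tilde F(\eta) := \int_\mathbb X F(x,\eta)\,d\eta(x) = \sum_{y \in \eta \cap \Delta} F(y,\eta)$ is $\Delta'$-local and satisfies $|\tilde F(\eta)| \leq \|F\|_\infty \eta(\Delta')$, so it is tame hence exponentially bounded, and Lemma~\ref{lem:subsequence} applies directly.

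For the second convergence the obstruction is that $\eta \mapsto e^{-W(x,\eta)}$ is not local when $v$ has infinite range, so I would truncate. For $\Lambda \in \mathcal X_{\mathsf b}$ containing $\Delta'$ set $G_x^\Lambda(\eta) := e^{-W(x,\eta_\Lambda)} F(x,\eta + \delta_x)$, which is $\Lambda$-local in $\eta$ and bounded by $\|F\|_\infty$. Writing $G_x(\eta) := e^{-W(x,\eta)} F(x,\eta + \delta_x)$, the decomposition $W(x,\eta) = W(x,\eta_\Lambda) + W(x,\eta_{\Lambda^c})$ (valid because $v \geq 0$) combined with the elementary inequality $1 - \prod_i (1 - a_i) \leq \sum_i a_i$ for $a_i \in [0,1]$, applied to the factors of $e^{-W(x,\eta_{\Lambda^c})}$, gives
\begin{equation*}
	|G_x(\eta) - G_x^\Lambda(\eta)| \leq \|F\|_\infty \bigl(1 - e^{-W(x,\eta_{\Lambda^c})}\bigr) \leq \|F\|_\infty \int_{\Lambda^c} |f(x,y)|\,d\eta(y).
\end{equation*}
Taking expectations against $\mathsf P_{n_r}$ and using $\rho_1^{(n)}(y) \leq z(y)$ from Lemma~\ref{lem:correp} bounds the truncation error uniformly in $r$ by $\|F\|_\infty \int_{\Lambda^c} |f(x,y)|\,d\lambda_z(y)$; the same bound under $\mathsf E$ is obtained by applying Lemma~\ref{lem:subsequence} to the local, tame cut-offs $\eta \mapsto \int_{\Lambda^c \cap \Lambda_M} |f(x,y)|\,d\eta(y)$ and letting $M \to \infty$ by monotone convergence.

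To close the argument I would integrate in $x \in \Delta$: the hypothesis $\Delta \in \mathcal X_{\mathsf b}^*$ and the inequality $e^b \geq b$ give $\int_\Delta \int_\mathbb X |f(x,y)|\,d\lambda_z(y)\,d\lambda_z(x) < \infty$, and dominated convergence then yields $\int_\Delta \int_{\Lambda^c} |f(x,y)|\,d\lambda_z(y)\,d\lambda_z(x) \to 0$ as $\Lambda \nearrow \mathbb X$. Meanwhile, for each fixed $\Lambda$ the function $G_x^\Lambda$ is $\Lambda$-local and uniformly bounded, so $\mathsf E_{n_r}[G_x^\Lambda] \to \mathsf E[G_x^\Lambda]$ pointwise in $x$ by Lemma~\ref{lem:subsequence}, and dominated convergence (with bound $\|F\|_\infty$ and $\lambda_z(\Delta) < \infty$) passes the limit under the outer integral. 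A standard $\varepsilon/3$ argument combining these three ingredients gives the second convergence. The main technical obstacle is uniform control of the truncation error under the limit measure $\mathsf P$, which at this stage is only accessed through its local statistics; the $\Lambda_M$-approximation together with the bound $\rho_1 \leq z$ inherited via the weak$^*$ limit in Step 1 of Lemma~\ref{lem:subsequence} is precisely what makes this workable.
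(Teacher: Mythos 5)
Your proof is correct, and while the overall architecture (localize, bound the truncation error uniformly in $r$, conclude by an $\eps/3$ argument) matches the paper's, the localization device is genuinely different. The paper works with the integrated functional $G(\eta)=\int_\mathbb X \e^{-W(x,\eta)}F(x,\eta+\delta_x)\dd\lambda_z(x)$ and localizes it by truncating the \emph{expansion} $\e^{-W(x,\eta)}=1+\sum_k\frac1{k!}\int\prod_i f(x,y_i)\dd\eta^{(k)}$ to integration over $B^k$, controlling the error through the bounds $\bar\rho_k^{(n)}\leq 1$ for all $k$ and the full condition $\int_\Delta\e^{b}\dd\lambda_z<\infty$ from $\Delta\in\mathcal X_\mathsf b^*$. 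You instead truncate the \emph{configuration}, replacing $W(x,\eta)$ by $W(x,\eta_\Lambda)$, and the elementary inequality $1-\e^{-W(x,\eta_{\Lambda^c})}\leq\int_{\Lambda^c}|f(x,y)|\dd\eta(y)$ reduces the error to a first-moment bound needing only $\rho_1\leq z$ and $\int_\Delta b\,\dd\lambda_z<\infty$ (which is weaker than membership in $\mathcal X_\mathsf b^*$, though of course implied by it). Your treatment of the truncation error under the limit measure $\mathsf P$ --- via monotone approximation by local tame cut-offs, since at that stage $\mathsf P$ is only accessible through Lemma~\ref{lem:subsequence} --- is actually more explicit than the paper's, which asserts the same bound for $\mathsf P$ without spelling out this step. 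Two minor remarks: your strengthening of hypothesis (i) to a single $\Delta'$ uniform in $x$ is also implicitly needed by the paper (its $G_B$ is only local under that reading, and the lemma is only ever applied to $F(x,\eta)=\1_\Delta(x)\1_A(\eta)$ with $A$ a fixed local event); and since you localize pointwise in $x$ before integrating, you should note (as you implicitly do via Fubini with the bound $\|F\|_\infty\1_\Delta(x)$) that the statement's iterated form $\int_\mathbb X\mathsf E_{n_r}[\cdots]\dd\lambda_z(x)$ agrees with the expectation of the integrated functional.
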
 

\begin{proof} 	
	The map $\eta\mapsto \int_\mathbb X F(x,\eta)\dd  \eta(x)$ is clearly local. It is also tame (hence exponentially bounded), since it is bounded by $\eta(\Delta) \sup |F|$. Therefore Lemma~\ref{lem:subsequence} yields the first equality. For the second equality, set 
	$$
		G(\eta): = \int_\mathbb X \e^{- W(x,\eta)} F(x,\eta+ \delta_x) \dd \lambda_z(x).
	$$
	Because of $W\geq 0$, the map $G$ is bounded by $\lambda_z(\Delta) \sup |F|$, however in general it is not local (unless the interaction has finite range), therefore we approximate  by local functions. For $B\in \mathcal X_\mathsf b$, set
	$$
		G_B(\eta):= \int_\mathbb X  \Biggl( 1+\sum_{k=1}^\infty \frac{1}{k!} \int_{B^k} \prod_{i=1}^k f(x,y_i) \dd \eta^{(k)}(\vect y)  \Biggr) F(x,\eta+ \delta_x) \dd \lambda_z(x). 
	$$
	In view of $|f(x,y)| = |\exp( - v(x,y))- 1|\leq 1$, we may bound
	$$
			\int_\mathbb X  \Biggl( 1+\sum_{k=1}^\infty \frac{1}{k!} \int_{B^k} \prod_{i=1}^k \bigl|f(x,y_i)\bigr| \dd \eta^{(k)}(\vect y)  \Biggr) \bigl|F(x,\eta+ \delta_x)\bigr| \dd \lambda_z(x)	\leq \e^{\eta(B)} \lambda_z(\Delta) \sup |F|,
	$$
	hence $G_B$ is well-defined and exponentially bounded. $G_B$ is clearly local.  As a consequence, 
	\be \label{eq:gbconv}
		\lim_{r\to \infty} \mathsf E_{n_r}[G_B]= \mathsf E[G_B],\qquad (B\in \mathcal X_\mathsf b).
	\ee
	Next we bound the expected value of $G - G_B$ with respect to $\mathsf P_n$ and $\mathsf P$. Using the expansion for $\exp( - W(x,\eta))$ from~\eqref{eq:ksaux}  and the bound $\bar \rho_k^n \leq 1$ (see~\eqref{eq:cobund}), we obtain
	\be \label{eq:unibound}
		\mathsf E_n\bigl[ |G- G_B|\bigr]  \leq \Bigl( \sup|F|\Bigr) \int_\Delta \Biggl( \sum_{k=1}^ \infty \frac{1}{k!} \int_{\mathbb X^k\setminus B^k}\prod_{i=1}^k\bigl| f(x,y_i)\bigr|  \dd \lambda_z^k(\vect y) \Biggr) \dd \lambda_z(x).
	\ee
	Notice that the right-hand side does not depend on $n$. The same bound holds true for the expected value with respect to $\mathsf P$. 
	Because of $\Delta \in \mathcal X_\mathsf b^*$ defined in~\eqref{eq:regular-delta},  we have 
	$$
		\int_\Delta \Biggl( \sum_{k=1}^ \infty \frac{1}{k!} \int_{\mathbb X^k}\prod_{i=1}^k\bigl| f(x,y_i)\bigr|  \dd \lambda_z^k(\vect y) \Biggr) \dd \lambda_z(x)
		= \int_\Delta \exp\Bigl( \int_\mathbb X |f(x,y)|\, \dd \lambda_z(y)\Bigr) \dd \lambda_z(x)< \infty.
	$$
	 Thus by dominated convergence applied to the right-hand side of~\eqref{eq:unibound}, for every increasing sequence $(B_j)_{j\in \N}$ with $B_j\nearrow \mathbb X$, we have 
	$$
		\lim_{j\to \infty} \sup_{n\in \N} \mathsf E_n\bigl[ |G- G_{B_j}|\bigr]   =0,
	$$
	similarly for expectation with respect to $\mathsf P$. Therefore given $\eps>0$, we can find $B\in \mathcal X_\mathsf b$ such that 
	$$
		\sup_{n\in \N} \mathsf E_n\bigl[ |G- G_{B}|\bigr] \leq\frac{\eps}{3}, \quad 
 \mathsf E\bigl[ |G- G_{B}|\bigr] \leq \frac{\eps}{3}.
	$$
	The proof is concluded with the help of~\eqref{eq:gbconv} and a straightforward $\eps/3$-argument. 
\end{proof} 

\begin{proof}[Proof of Theorem~\ref{thm:existence}] 
	Let $(\Lambda_n)_{n\in \N}$ be a cofinal sequence of non-empty bounded sets, $(\mathsf P_n)_{n\in \N}$ the finite-volume Gibbs measures with empty boundary conditions, and $(\mathsf P_{n_r})_{r\in \N}$ the convergent subsequence from Lemma~\ref{lem:subsequence}. We show that the accumulation point $\mathsf P$ is in $\mathscr G(z)$. The finite-volume Gibbs measures satisfy
	\be \label{eq:gnzfivon}
		\mathsf E_n\Bigl[\int_{\Lambda_n} F(x,\eta)\dd \eta(x)\Bigr] = \int_{\Lambda_n}\, \mathsf E_n\Bigl[\e^{- W(x,\eta)} F(x,\eta+ \delta_x)\Bigr] \dd\lambda_z(x),
	\ee
	for all $F:\mathbb X\times \mathcal N\to \R_+$ measurable. Suppose that  $F$ is as in Lemma~\ref{lem:gnzconvergence}, i.e., $F$ is bounded, $F(x,\cdot)$ is local, and $F(\cdot,\eta)$ vanishes outside a bounded set $\Delta\in \mathcal X_\mathsf b$ satisfying~\eqref{eq:regular-delta}.  Since $\Delta\subset \Lambda_n$ for all sufficiently large $n$, we can replace integration over $\Lambda_n$ in~\eqref{eq:gnzfivon} by integration over $\mathbb X$, and then take limits along the subsequence $(n_r)$. Lemma~\ref{lem:subsequence} shows that 
	\be \label{eq:gnzprefinal}
			\mathsf E\Bigl[\int_{\mathbb X} F(x,\eta)\dd \eta(x)\Bigr] = \int_{\mathbb X}\, \mathsf E\Bigl[\e^{- W(x,\eta)} F(x,\eta+ \delta_x)\Bigr] \dd\lambda_z(x).
	\ee
	The identity is extended to  all non-negative measurable $F$ with a $\pi$-$\lambda$ theorem for $\sigma$-finite measures. Consider the measures on $(\mathbb X\times \mathcal N, \mathcal X\otimes \mathfrak N)$ defined by 
	$$
		\mathscr C(B) = \mathsf E\Bigl[ \int_{\mathbb X}\1_B(x,\eta)\dd \eta(x)\Bigr],\quad \mu(B) = \int_{\mathbb X} \, \mathsf E\Bigl[\e^{-W(x,\eta)} \1_B(x,\eta+\delta_x)\Bigr]\, \dd \lambda_z(x).
	$$
	$\mathscr C$ is the Campbell measure of $\mathsf P$. 	
 Applying Eq.~\eqref{eq:gnzprefinal} to $F(x,\eta) = \1_\Delta(x) \1_A(x)$ with $\Delta\in \mathcal X_\mathsf b^*$  and $A\in \mathcal A$ a local event, we find that 
	$$	
		\mathscr C(\Delta\times A) = \mu(\Delta\times A) \quad (\Delta\in \mathcal X_\mathsf b^*,\, A\in \mathcal A). 
	$$
	Because of $\bar \rho_1 \leq 1$, we have 
	  $$\mathscr C (\Delta\times \mathcal N) = \mathsf E[\eta (\Delta)] = \int_\Delta \bar \rho_1\dd \lambda_z(\Delta) \leq \lambda_z(\Delta)<\infty
	  $$
	for all $\Delta \in \mathcal X_\mathsf b$, hence $\mathscr C$ is $\sigma$-finite. The measure $\mu$ is $\sigma$-finite as well, since $\mu(\Delta \times \mathcal N) = \mathscr C(\Delta\times \mathcal N)<\infty$ for all $\Delta \in \mathcal X_\mathsf b$ and $\mathbb X$ can be written as a countable union of sets in $\mathcal X_\mathsf b^*$. To conclude, we note that the collection of sets of the form $\Delta \times A$ with $\Delta \in \mathcal X_\mathsf b^*$ and
	$A\in \mathcal A$ is a $\pi$-system that generates $\mathcal X\otimes \mathfrak N$. Since the measures $\mathscr C$ and $\mu$ are $\sigma$-finite and coincide on a generating $\pi$-system, they must be equal. It follows that~\eqref{eq:gnzprefinal} extends to all indicators of measurable sets and then to all measurable non-negative functions. Thus $\mathsf P$ satisfies~\eqref{eq:gnz} and we have proven $\mathsf P\in \mathscr G(z)$. In particular, $\mathscr G(z)\neq \varnothing$. 
\end{proof} 

\section{Random connection model} \label{app:RCM} 

Here we provide some details for the random connection model discussed in Section~\ref{sec:boolean}.  Theorem~\ref{thm:RCM} below is a variant of a standard result for translationally invariant random connection model. The theorem follows by a straightforward adaptation of a construction with pruned branching random walk from~\cite{meester-penrose-sarkar1997}, the details are left to the reader. 

Let $\eta$ be a Poisson point process with intensity $\lambda_z$, defined on some probability space $(\Omega,\mathcal F, \mathbb P)$.  
For simplicity let us assume that $\lambda_z$ has no atoms so that $\eta$ is simple. For $q\in \mathbb X$ write $\eta^q = \eta+ \delta_x$. 
Assume we are given a measurable map $\varphi:\mathbb X\times \mathbb X\to [0,1]$ with $\varphi(x,y) = \varphi(y,x)$ and $\int_{\mathbb X} \varphi(x,y) \dd \lambda_z(y) <\infty$; for example, $\varphi(x,y) = |f(x,y)|$ with $f(x,y)$ satisfying~\eqref{eq:bfinite}.   The \emph{random connection model} is a random graph $G$ whose vertices are the points of $\eta$. Roughly, it is constructed as follows: Conditional on $\eta = \sum_{i=1}^\kappa \delta_{x_i}$, the events $\{ \{x_i,x_j\}\in E(G)\}$, $i<j$, are independent and the probability that $\{x_i,x_j\}$ belongs to the graph is given by $\varphi(x_i,x_j)$.  For $q\in \mathbb X$,  a random graph $G^q$ whose vertices are the points of $\eta+\delta_q$ is defined, intuitively, in a similar fashion; we write $\mathscr C(q,\eta)$ for the connected component of $q$ in  $G^q$. Precise definitions are found in~\cite{meester-roy1996book,last-ziesche2017} for the  translationally invariant case in $\R^d$ and in~\cite{last-nestmann-schulte2018} for the general setup.

Remember the branching process $(\eta_n^q)_{n\in \N_0}$ introduced above Proposition~\ref{prop:extinction}. It is a well-known result~\cite[Chapter 6]{meester-roy1996book} that extinction of a dominating branching process implies absence of percolation in the random connection model. 

\begin{theorem} \label{thm:RCM}
	Suppose that for $\lambda_z$-almost all $q$, 
	$(\eta_n^q)_{n\in \N}$ goes extinct with probability $1$. Then for $\lambda_z$-almost all $q\in \mathbb X$, $\mathbb P( |\mathscr C(q,\eta)|<\infty) = 1$.
\end{theorem}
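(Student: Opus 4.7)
The plan is to couple the cluster $\mathscr{C}(q,\eta)$ with the branching process $(\eta_n^q)$ via a pruned branching-random-walk construction, adapting the argument of~\cite{meester-penrose-sarkar1997} to the present (not necessarily translation-invariant) setting. First I would represent the random connection model on an enlarged probability space by attaching to each ordered pair of candidate points an independent $[0,1]$-uniform mark and declaring an edge to exist whenever both marks fall below $\varphi(x,y)$; in this way every connection event becomes a measurable function of the Poisson input $\eta$ plus the independent marks.

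Next I would explore $\mathscr{C}(q,\eta)$ in generations $(\zeta_n)_{n\geq 0}$: set $\zeta_0=\delta_q$ and let $\zeta_{n+1}$ record those points of $\eta$ not previously discovered that connect to $\supp \zeta_n$ by an edge of the marked graph. By the restriction-and-independent-thinning property of Poisson processes, conditional on the already-explored set $U_n$, the contribution to $\zeta_{n+1}$ from a parent $x \in \supp \zeta_n$ is a Poisson point process of intensity $\varphi(x,y)\1_{y\notin U_n}\, \dd\lambda_z(y)$, hence is stochastically dominated by a Poisson process of intensity $\varphi(x,y)\, \dd\lambda_z(y)$. On the same probability space I would then build $(\tilde\eta_n)$ by adding \emph{ghost} offspring on $U_n$, using auxiliary independent Poisson processes that exactly fill in the missing intensity, so that $\tilde\eta_n \geq \zeta_n$ pointwise as measures and $(\tilde\eta_n)_{n\geq 0}$ has the law of the branching process $(\eta_n^q)$.

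The conclusion is then immediate: the total population $\sum_{n\geq 0}\zeta_n(\mathbb X)$ equals $|\mathscr{C}(q,\eta)|$, and by hypothesis the total population of $(\tilde\eta_n)$ is finite $\mathbb P$-a.s.\ for $\lambda_z$-a.e.\ $q$, so the same holds for $|\mathscr{C}(q,\eta)|$. The main obstacle I anticipate is the measurability and independence bookkeeping for the ghost construction: at each step one must re-sample on $U_n$ using fresh randomness that is independent of the past of the exploration yet compatible with the Markov structure of $(\eta_n^q)$. I would handle this inductively, using the filtration generated by $(\zeta_0,\ldots,\zeta_n)$ together with independent auxiliary Poisson processes on $U_n$; the statement is notationally heavy but the combinatorial heart of the argument is the elementary Poisson-thinning domination of the previous paragraph.
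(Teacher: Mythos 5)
Your plan is exactly the route the paper takes: the paper states Theorem~\ref{thm:RCM} without proof, saying only that it ``follows by a straightforward adaptation of a construction with pruned branching random walk from \cite{meester-penrose-sarkar1997}'', which is precisely the exploration-plus-ghost-offspring coupling you describe. The only point to phrase carefully is that the per-parent contributions to $\zeta_{n+1}$ are thinnings of the \emph{same} Poisson process $\eta$ restricted to $U_n^c$ (a single point $y$ may connect to several parents, and these thinnings are not independent), so the clean statement is that $\zeta_{n+1}$ is a thinning of $\eta|_{U_n^c}$ with retention probability $1-\prod_{x\in\supp\zeta_n}\bigl(1-\varphi(x,y)\bigr)\leq\sum_{x\in\supp\zeta_n}\varphi(x,y)$, hence dominated by the superposition of independent Poisson offspring processes --- which is exactly what the prune-the-branching-process construction delivers.
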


\noindent The proof is a straightforward adaptation of the proof in~\cite{meester-roy1996book, meester-penrose-sarkar1997} and therefore omitted. Combining Theorem~\ref{thm:RCM} with Proposition~\ref{prop:extinction}, we obtain the following corollary. 

\begin{cor} 
	Let $z:\mathbb X\to \R_+$ be a measurable function with $\int_\mathbb X z\dd \lambda <\infty$ for all $B\in \mathcal X_\mathsf b$, and $f(x,y) = \exp( - v(x,y)) - 1$ with $v:\mathbb X\times\mathbb X\to [0,\infty) \cup \{\infty\}$ measurable. 
	Suppose that the sufficient convergence condition~\eqref{eq:suff2} with $t=0$ holds true. Then in the random connection model with connectivity probability $\varphi(x,y) = |f(x,y)|$, driven by a Poisson point process with intensity measure $\lambda_z$, we have $\mathbb P( |\mathscr C(q,\eta)|<\infty) = 1$  for $\lambda_z$-almost all $q\in \mathbb X$. 
\end{cor}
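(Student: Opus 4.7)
The plan is to chain together three ingredients that are already established in the paper: the equivalence in Proposition~\ref{prop:kptrees} between the integral condition~\eqref{eq:suff2} and finiteness of the tree generating functions $T_q^\circ(z)$; the implication in Proposition~\ref{prop:extinction} from finiteness of $T_q^\circ(z)$ to extinction of the associated multi-type branching process; and the domination result Theorem~\ref{thm:RCM} passing from branching extinction to absence of percolation in the random connection model.

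Concretely, I would first observe that condition~\eqref{eq:suff2} with $t=0$ is precisely statement~(ii) of Proposition~\ref{prop:kptrees} (the measurable function $a$ being the one supplied by the hypothesis), and in particular it implies the finiteness $b(q) = \int_\mathbb X |f(q,y)| \dd \lambda_z(y) \leq a(q) < \infty$ needed to even define the branching process $(\eta_n^q)_{n \in \N_0}$ of Section~\ref{sec:branching}. By the (ii)$\Rightarrow$(i) direction of Proposition~\ref{prop:kptrees}, we obtain $T_q^\circ(z) < \infty$ for all $q \in \mathbb X$.

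Next, Proposition~\ref{prop:extinction} applies verbatim, yielding that the extinction probability satisfies $p(q) = 1$ for every $q \in \mathbb X$, i.e., the branching process starting from any ancestor $\delta_q$ almost surely dies out. Since by construction the offspring distribution of a type-$x$ individual in the branching process is a Poisson point process with intensity $|f(x,\cdot)|\dd\lambda_z = \varphi(x,\cdot)\dd\lambda_z$, this is exactly the dominating branching process associated to the random connection model in Theorem~\ref{thm:RCM}. Invoking that theorem gives $\mathbb P(|\mathscr C(q,\eta)| < \infty) = 1$ for $\lambda_z$-almost every $q\in\mathbb X$, which is the desired conclusion.

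There is essentially no obstacle once one verifies that the branching process appearing in Proposition~\ref{prop:extinction} is indeed the same one that dominates cluster exploration in the random connection model of Theorem~\ref{thm:RCM}; this identification is immediate from the choice $\varphi = |f|$ and the formulas for the offspring Laplace functional in Section~\ref{sec:branching}. The only mild subtlety is that Theorem~\ref{thm:RCM} requires extinction for $\lambda_z$-almost every $q$, while Propositions~\ref{prop:kptrees} and~\ref{prop:extinction} give extinction for \emph{every} $q \in \mathbb X$, so no measurability or null-set bookkeeping is needed.
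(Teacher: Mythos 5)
Your proposal is correct and follows exactly the paper's route: the paper proves this corollary by the same chain, namely condition~\eqref{eq:suff2} with $t=0$ gives $T_q^\circ(z)<\infty$ via Proposition~\ref{prop:kptrees}, Proposition~\ref{prop:extinction} then yields extinction, and Theorem~\ref{thm:RCM} converts extinction into absence of percolation. The only caveat worth noting is that~\eqref{eq:suff2} is assumed only $\lambda$-almost everywhere, so the intermediate statements hold a.e.\ rather than for every $q$, but this is harmless since the conclusion is itself an almost-everywhere statement.
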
 

\noindent We leave as an open problem whether a similar result holds true for the convergence criterion~\eqref{eq:suff1}. 

\subsubsection*{Acknowledgments} 
The generalization to the continuum set-up of the convergence condition by Fern{\'a}ndez and Procacci grew out of discussions with Dimitrios Tsagkarogiannis. I am also indebted to  Christoph Hofer-Temmel and  Matthias Schulte for alerting me to expansion techniques in the theory of  point processes. The present work benefitted from the stimulating atmosphere at the Oberwolfach mini-workshop \emph{Cluster expansions: from combinatorics to analysis via probability} (February 2017). I gratefully acknowledge support by the DFG scientific network \emph{Cumulants, concentration, and superconcentration}. 

%\nocite{*}
\bibliographystyle{amsalpha}
\bibliography{clusterbiblio}

\end{document}